\renewcommand{\@secnumfont}{\bfseries}
\renewcommand\section{\@startsection{section}{1}%
\z@{.7\linespacing\@plus\linespacing}{.5\linespacing}%
{\large\bfseries\centering}}
\renewcommand\subsection{\@startsection{subsection}{2}%
  \z@{.5\linespacing\@plus.7\linespacing}{-.5em}%
  {\bfseries}}
\renewcommand\subsubsection{\@startsection{subsubsection}{3}%
  \z@{.5\linespacing\@plus.7\linespacing}{-.5em}%
  {\scshape}}
\newcommand\blue{\textcolor{blue}}
\newcommand\red{\textcolor{red}}
\definecolor{eb}{rgb}{0.2,0,0.8}
\numberwithin{equation}{section}
\appto{\bibsetup}{\sloppy}
\newenvironment{myitemize}{%
\begin{list}{$\bullet$}%
 	{%
	\setlength{\itemsep}{0.4em}%
	\setlength{\topsep}{0.5em}%
	\setlength\leftmargin{2.45em}%
	\setlength\labelwidth{2.05em}%
	\setlength{\labelsep}{0.4em}%
%	\usecounter{enumi}%
	}%
	}%
{\end{list}}
\renewenvironment{itemize}{
\begin{myitemize}}%
{\end{myitemize}}
\newcommand*{\myfnsymbolsingle}[1]{%
  \ensuremath{%
    \ifcase#1% 0
    \or % 1
      \dagger
    \else % >= 2
      \@ctrerr  
    \fi
  }%   
}   
\newalphalph{\myfnsymbolmult}[mult]{\myfnsymbolsingle}{}
\theoremstyle{plain}
\newtheorem{theorem}{Theorem}[section]
\newtheorem{lemma}[theorem]{Lemma}
\newtheorem{proposition}[theorem]{Proposition}
\newtheorem{corollary}[theorem]{Corollary}
\theoremstyle{definition}
\newtheorem{definition}[theorem]{Definition}
\newtheorem{remark}[theorem]{Remark}
\DeclarePairedDelimiter\abs{\lvert}{\rvert} % absolute value
\let\oldabs\abs
\def\abs{\@ifstar{\oldabs}{\oldabs*}}
\DeclarePairedDelimiterX{\norm}[1]{\lVert}{\rVert}{#1} % norm
\let\oldnorm\norm
\def\norm{\@ifstar{\oldnorm}{\oldnorm*}}
\DeclarePairedDelimiterX{\ceil}[1]{\lceil}{\rceil}{#1} % ceiling
\let\oldceil\ceil
\def\ceil{\@ifstar{\oldceil}{\oldceil*}}
\DeclarePairedDelimiterX{\floor}[1]{\lfloor}{\rfloor}{#1} % floor
\let\oldfloor\floor
\def\floor{\@ifstar{\oldfloor}{\oldfloor*}}
\DeclareRobustCommand{\cev}[1]{%
  {\mathpalette\do@cev{#1}}%
}
\newcommand{\do@cev}[2]{%
  \vbox{\offinterlineskip
    \sbox\z@{$\m@th#1 x$}%
    \ialign{##\cr
      \hidewidth\reflectbox{$\m@th#1\vec{}\mkern4mu$}\hidewidth\cr
      \noalign{\kern-\ht\z@}
      $\m@th#1#2$\cr
    }%
  }%
}
\renewcommand{\P}{\mathbb{P}} % probability
\newcommand{\E}{\mathbb{E}} % expectation
\newcommand{\1}{\mathbbm{1}} % indicator function
\newcommand{\diff}{\mathop{}\!\mathrm{d}}
\DeclareMathOperator{\e}{\mathrm{e}} % Euler number
\DeclareMathOperator{\tr}{tr}
\newcommand{\maxeig}{\lambda_{\max}}
\newcommand{\mineig}{\lambda_{\min}}
\newcommand{\meas}[1]{\mathfrak{m} {#1}}
\newcommand{\bdd}[1]{\mathfrak{b} {#1}}
\renewcommand{\emptyset}{\varnothing}
\newcommand{\Z}{\mathbb{Z}} % integer numbers
\newcommand{\R}{\mathbb{R}} % real numbers
\newcommand{\C}{\mathbb{C}} % complex numbers
\newcommand{\pos}{\mathcal{P}}
\newcommand{\spos}{\overline{\mathcal{P}}}
\newcommand{\GL}{\mathrm{GL}}
\newcommand{\ort}{\mathcal{O}}
\newcommand{\trian}[2]{\mathcal{T}^{#1}_{#2}}
\newcommand{\Sym}{\mathrm{Sym}}
\newcommand{\Diag}{\mathrm{Diag}}
\newcommand{\Scal}{\mathrm{Scal}}
\renewcommand{\epsilon}{\varepsilon}
\renewcommand{\rho}{\varrho}
\renewcommand{\phi}{\varphi}
\DeclareMathSymbol{\widehatsym}{\mathord}{largesymbols}{"62}
\renewcommand{\hat}{\widehat}
\renewcommand{\tilde}{\widetilde}
\newcommand{\widesim}[2][1.5]{
  \mathrel{\overset{#2}{\scalebox{#1}[1]{$\sim$}}}
}
\newcommand{\doubletilde}[1]{{%
  \mathpalette\double@tilde{#1}%
}}
\newcommand{\double@tilde}[2]{%
  \sbox\z@{$\m@th#1\tilde{#2}$}%
  \ht\z@=.9\ht\z@
  \tilde{\box\z@}%
}
\newenvironment{myenumerate}{%
\renewcommand{\theenumi}{(\roman{enumi})}%
\renewcommand{\labelenumi}{\theenumi}%
\begin{list}{\labelenumi}
	{%
	\setlength{\itemsep}{0.4em}%
	\setlength{\topsep}{0.5em}%
	\setlength\leftmargin{2.45em}%
	\setlength\labelwidth{2.05em}%
	\setlength{\labelsep}{0.4em}%
	\usecounter{enumi}%
	}%
	}%
{\end{list}
}
\renewenvironment{enumerate}{
\begin{myenumerate}}%
{\end{myenumerate}}
\lbrace\begin{array}{@{}l@{}}}%
\newsavebox{\mybox}\newsavebox{\mysim}
\newcommand{\distras}[1]{%
  \savebox{\mybox}{\hbox{\kern3pt$\scriptstyle#1$\kern3pt}}%
  \savebox{\mysim}{\hbox{$\sim$}}%
  \mathbin{\overset{#1}{\kern\z@\resizebox{\wd\mybox}{\ht\mysim}{$\sim$}}}%
}
\author[J.~Arista]{Jonas Arista}
\address{Universit\"at Bielefeld\\
Fakult\"at f\"ur Mathematik \\
Universit\"atsstra{\ss}e 25\\
33615 Bielefeld, Germany}
\email{jarista@math.uni-bielefeld.de}
\author[E.~Bisi]{Elia Bisi}
\address{Technische Universit\"at Wien \\
Institut f\"ur Stochastik und Wirtschaftsmathematik \\
E 105-07 \\
Wiedner Hauptstra{\ss}e 8-10 \\
1040 Wien, Austria}
\email{elia.bisi@tuwien.ac.at}
\author[N.~O'Connell]{Neil O'Connell}
\address{School of Mathematics and Statistics\\
University College Dublin\\
Dublin 4, Ireland}
\email{neil.oconnell@ucd.ie}
\thanks{Research supported by the European Research Council (grant 669306).}
\keywords{Whittaker functions of matrix arguments; Intertwining relations; Interacting Markov dynamics; Noncommutative polymer models; Constrained minimisation; Directed graphs}
\subjclass[2010]{Primary:
60K35, % Interacting random processes; statistical mechanics type models; percolation theory
82B23, % Exactly solvable models; Bethe ansatz
60B20. % Random matrices (probabilistic aspects)
Secondary:
33C15, % Confluent hypergeometric functions, Whittaker functions
05E05, % Symmetric functions and generalizations
22E30. % Analysis on real and complex Lie groups
}
\begin{document}

\title[Matrix Whittaker processes]{Matrix Whittaker processes}
\maketitle

\begin{abstract}
We study {a discrete-time Markov process} on triangular arrays of matrices of {size} $d\geq 1$, driven by inverse Wishart random matrices.
The components of the right edge evolve as multiplicative random walks on positive definite matrices with one-sided interactions and can be viewed as a $d$-dimensional generalisation of log-gamma polymer partition functions.
We establish intertwining relations to prove that, for suitable initial configurations of the triangular process, the bottom edge has an autonomous Markovian evolution with an explicit transition kernel.
We then show that, for a {special} singular initial configuration, the fixed-time law of the bottom edge is a {matrix Whittaker measure}, which we define.
To achieve this, we perform a Laplace approximation that requires solving a constrained minimisation problem for certain energy functions of matrix arguments on directed graphs.
\end{abstract}

\tableofcontents

\section{Introduction}
\label{sec:intro}

In the last few decades, we have witnessed a surge of research on stochastic integrable models, often motivated by problems in mathematical physics and enriched by deep connections with algebraic combinatorics, representation theory, symmetric functions, and integrable systems~\cite{borodinPetrov14, borodinGorin16}.
Some of the most intensively studied models are interacting particle systems and stochastic growth processes in the Kardar-Parisi-Zhang (KPZ) universality class~\cite{corwin16, zygouras22}.

{From a mathematical perspective, it is natural to consider noncommutative versions of these models, {which have very recently received some attention}.}
In~\cite{oConnell21} a system of interacting Brownian particles in the space of positive definite matrices was considered and shown to have an integrable structure, related to the non-Abelian Toda chain and Whittaker functions of matrix arguments (the latter introduced in that article).
In the discrete-time setting, \cite{aristaBisiOConnell23} proved Matsumoto-Yor and Dufresne type theorems for a random walk on positive definite matrices.

{On the other hand, from the theoretical physics point of view, such matrix models may find interesting applications in quantum stochastic dynamics, as set out in~\cite{gautieEtAl21}.
In particular, \cite{gautieEtAl21} introduced a matrix generalisation of the classical Kesten recursion and studied a related quantum problem of interacting fermions in a Morse potential}.
Quoting the authors, their initial motivation was ``to explore possible matrix (non-commuting) generalizations of the famous directed polymer problem (which is related to the KPZ stochastic growth equation)''.

The subject of the present article is an integrable model of random walks on positive definite matrices with local interactions.
This constitutes, on the one hand, a discrete-time analogue of the matrix-valued interacting diffusions studied in~\cite{oConnell21} and, on the other hand, a matrix generalisation of the \emph{log-gamma polymer} model.

To motivate the contributions of this article, let us first define a discrete-time \emph{exclusion process} $\mathcal{Z}$ of $N\geq 1$ ordered particles $\mathcal{Z}^1\leq \mathcal{Z}^2 \leq \dots \leq \mathcal{Z}^N$ on $\Z$ moving to the right.
Let $(\mathcal{V}^1(n), \dots, \mathcal{V}^N(n))_{n\geq 1}$ be a collection of independent random variables supported on $\Z_{\geq 0}$.
At each time $n$, the particle positions are updated sequentially from the $1$-st one to the $N$-th one, as follows.
The $1$-st particle simply evolves as a random walk on $\Z$ with time-$n$ increment $\mathcal{V}^1(n)$.
Once the positions of the first $i-1$ particles have been updated, if the $(i-1)$-th particle has overtaken the $i$-th particle, then the latter is pushed forward to a temporary position to maintain the ordering; next, to complete its update, the $i$-th particle takes $\mathcal{V}^i(n)$ unit jumps to the right.
The particle locations then satisfy the recursive relations
\begin{align}
\label{eq:IPS1}
\mathcal{Z}^1(n) &= \mathcal{Z}^1(n-1) + \mathcal{V}^1(n) \, , \\
\label{eq:IPS2}
\mathcal{Z}^i(n) &= \max\left[\mathcal{Z}^{i-1}(n), \mathcal{Z}^i(n-1)\right] + \mathcal{V}^i(n) \, , \quad 2\leq i\leq N \, .
\end{align}
If one considers the initial state
\begin{equation}
\label{eq:stepInitialConf_IPS}
\mathcal{Z}^1(0)=\mathcal{Z}^2(0)=\cdots=\mathcal{Z}^N(0)=0 \, ,
\end{equation}
{then the following {\emph{last passage percolation} formula} holds:
\begin{equation*}
\mathcal{Z}^i(n) = \max_{\pi} \sum_{(m,k)\in \pi} \mathcal{V}^k(m) \, , \qquad 1\leq i\leq N\, ,
\end{equation*}
where the maximum is over all directed lattice paths $\pi$ in $\Z^2$ (i.e., at each lattice site $(m,k)$, $\pi$ is allowed to head either rightwards to $(m+1,k)$ or upwards to $(m,k+1)$) that start from $(1,1)$ {and} end at $(n,i)$.}
{As a process of last passage percolation times, $\mathcal{Z}$ can be also} associated with the \emph{corner growth process} with step (or `narrow wedge') initial configuration.
Remarkable integrable versions of this model are those with geometrically and exponentially distributed jumps, first studied in~\cite{johansson00}.

A positive temperature version of $\mathcal{Z}$ can be obtained by formally replacing the operations $(\max,+)$ with $(+,\times)$ in the relations~\eqref{eq:IPS1}-\eqref{eq:IPS2}.
Namely, given a collection of independent positive random variables $(V^1(n), \dots, V^N(n))_{n\geq 1}$, we can consider the discrete-time Markov process $Z$ defined by
\begin{align}
\label{eq:IPS_posTemp1}
Z^1(n) &= Z^1(n-1) V^1(n) \, , \\
\label{eq:IPS_posTemp2}
Z^i(n) &= \left[Z^{i-1}(n) + Z^i(n-1)\right] V^i(n) \, , \quad 2\leq i\leq N \, .
\end{align}
Considering the initial configuration
\begin{equation}
\label{eq:stepInitialConf}
Z^1(0)=1 \, , \qquad
Z^2(0)=\cdots=Z^N(0)=0 \, ,
\end{equation}
we have the closed-form expression
\begin{equation}
\label{eq:polymer}
Z^i(n) = \sum_{\pi} \prod_{{(m,k)}\in \pi} V^k(m) \, , \qquad 1\leq i\leq N\, ,
\end{equation}
where the sum is over all directed lattice paths $\pi$ in $\Z^2$ from $(1,1)$ to {$(n,i)$}.
The variables~\eqref{eq:polymer} can be regarded as partition functions of the $(1+1)$-dimensional \emph{directed polymer}, an intensively studied model of statistical mechanics.
Of particular importance is the model with inverse gamma distributed weights $V^i(n)$, known as the \emph{log-gamma polymer}, first considered in~\cite{seppalainen12}.
In~\cite{corwinEtAl14} it was shown that the laws of log-gamma polymer partition functions are marginals of \emph{Whittaker measures}; the latter are defined in terms of $\GL_d(\R)$-Whittaker functions and were introduced in that article.

In this article, we study a noncommutative generalisation of the above Markov process of log-gamma polymer partition functions.
The `particles' of this process live in $\pos_d$, the set of $d\times d$ positive definite real symmetric matrices.
The random weights $V^i(n)$ are now independent inverse Wishart matrices (a matrix generalisation of inverse gamma random variables{; see \S~\ref{subsec:notation}}).
We define $Z$ by setting
\begin{align}
\label{eq:rightEdge1}
Z^1(n) &:= Z^1(n-1)^{1/2} V^1(n) Z^1(n-1)^{1/2} \, , \\
\label{eq:rightEdge2}
Z^i(n) &:= \left[Z^{i-1}(n) + Z^i(n-1)\right]^{1/2} V^i(n) \left[Z^{i-1}(n) + Z^i(n-1)\right]^{1/2}, \quad 2\leq i\leq N \, ,
\end{align}
where, for $a\in\pos_d$, $a^{1/2}$ denotes the unique $b\in\pos_d$ such that $b^2=a$.
The above matrix products are symmetrised to ensure that, starting from any initial configuration $Z^i(0)\in \pos_d$, each $Z^i(n)$ still belongs to $\pos_d$ for all $n\geq 1$.
The $1$-st particle~\eqref{eq:rightEdge1} evolves as a ($\GL_d$-invariant) multiplicative random walk on $\pos_d$; on the other hand, the other particles~\eqref{eq:rightEdge2} can be viewed as analogous random walks with one-sided interactions.
From this point of view, the Markov process as a whole can be also regarded as a noncommutative version of the exclusion process $\mathcal{Z}$ defined in~\eqref{eq:IPS1}-\eqref{eq:IPS2}.
The natural generalisation of the initial configuration~\eqref{eq:stepInitialConf} is
\begin{equation}
\label{eq:stepInitialConf_matrix}
Z^1(0)=I_d \, , \qquad
Z^2(0) = \cdots = Z^N(0)=0_d \, ,
\end{equation}
where $I_d$ and $0_d$ are the $d\times d$ identity and zero matrices, respectively.
Notice that, although all but the first particle are initially zero, the process $Z$ starting from~\eqref{eq:stepInitialConf_matrix} lives in $\pos_d^N$ at all times $n\geq 1$.

In~\S~\ref{sec:MarkovDynamics}, we introduce a Markov process $X=(X(n))_{n\geq 0}$, $X(n) = (X^i_j(n))_{1\leq j\leq i\leq N}$, on triangular arrays of positive definite matrices whose {`right edge', namely $(X^1_1,\dots,X^N_1)$, equals $Z$}.
The evolution of $X$ may be viewed as a noncommutative version of the dynamics on Gelfand-Tsetlin patterns with blocking and pushing interactions, studied in various contexts {in~\cite{warren07, warrenWindridge09, nordenstam10, borodinFerrari14, borodinCorwin14, BorodinPetrov16}.} We refer to Fig.~\ref{fig:energyWhittaker} for a graphical representation of such a triangular array.
{Moreover, as we detail in Remark~\ref{rem:d=1}, the `left edge' of $X$ may be regarded as a noncommutative generalisation of the \emph{strict-weak polymer} studied in~\cite{oConnellOrtmann15, corwinSeppalainenShen15}.}

The first main result of this article (Theorem~\ref{thm:bottomRowMarkov}) states that, for certain special (random) initial configurations $X(0)$, the `bottom edge' $X^N = (X^N_1,\dots,X^N_N)$ of $X$ also has an autonomous Markovian evolution.
The transition kernel of $X^N$ is explicit and has an interpretation as a Doob $\mathit{h}$-transform with $\mathit{h}$-function given by a Whittaker function of matrix arguments.
To obtain this, we prove certain intertwining relations between kernels associated to the process $X$ and use the theory of Markov functions (reviewed in Appendix~\ref{app:markovFunctions}).
Another consequence of these intertwinings is that Whittaker functions are eigenfunctions of certain integral operators and possess a Feynman--Kac type interpretation.

Next, in~\S~\ref{sec:WhittakerMeasures}, we define {matrix Whittaker measures} on $\pos_d^N$ after proving an integral identity of Whittaker functions of matrix arguments (Theorem~\ref{thm:stade_matrix}), analogous to the well-known Cauchy-Littlewood identity for Schur functions.
The second main result of this article (Theorem~\ref{thm:specialInitialCond}) states that, for a {special} initial state, the fixed-time law of the bottom edge $X^N$ of $X$ is a {matrix Whittaker measure} on $\pos_d^N$.
Such an initial state, designed to match~\eqref{eq:stepInitialConf_matrix}, is {\emph{singular},} in the sense that the particles are at the `boundary' of $\pos_d$.

Due to the singularity of the initial configuration, the proof of Theorem~\ref{thm:specialInitialCond} will be based on a suitable limiting procedure and a careful integral approximation via Laplace's method.
This will require a digression on a constrained minimisation problem for certain energy functions of matrix arguments.
We chose to include this analysis in a separate section and to present it in the more general framework of directed graphs, as it may be of independent interest; see~\S~\ref{sec:minimisation}.
For us, the main application will be the asymptotic formula~\eqref{eq:WhittakerAsymptotics} for Whittaker functions of matrix arguments.

From our main results {we deduce} (see Corollary~\ref{coro:rightMarginal}) that, under the initial configuration~\eqref{eq:stepInitialConf_matrix}, the particles of the process $Z$ defined in~\eqref{eq:rightEdge1}-\eqref{eq:rightEdge2} have a fixed-time law given by the first marginal of a {matrix Whittaker measure} on $\pos_d^N$.
In the scalar $d=1$ case, we recover the aforementioned result of~\cite{corwinEtAl14} for the law of the log-gamma polymer partition functions. {{In Corollary~\ref{coro:rightMarginal}, we also} obtain an analogous result concerning the fixed-time law of the `left edge' of the triangular array $X$.}

It is worth mentioning that the log-gamma polymer partition functions \eqref{eq:polymer} were also studied in~\cite{corwinEtAl14} as embedded in a {dynamic} on triangular arrays.
However, such a {dynamic} was constructed via the combinatorial mechanism of the geometric Robinson--Schensted--Knuth correspondence; in particular, at each time step, the right edge is updated using $N$ new (independent) random variables, whereas all the other components are updated via deterministic transformations of the current state and the newly updated right edge.
It turns out that, for $d=1$, the processes considered in~\cite{corwinEtAl14} and in the present article have an identical right edge and, under the special initial configuration of Theorem~\ref{thm:bottomRowMarkov}, also a bottom edge process with the same Markovian evolution.
However, even in the $d=1$ case, the two processes, as a whole, differ.
The {dynamic} introduced in this article is driven by random updates with $N(N+1)/2$ degrees of freedom, since each particle of the triangular array is driven by an independent source of randomness (as well as by local interactions with the other particles).

\subsubsection*{Organisation of the article.}
In~\S~\ref{sec:WhittakerFunctions}, we define Whittaker functions of matrix arguments.
In~\S~\ref{sec:MarkovDynamics}, we introduce a Markov {dynamic} on triangular arrays of matrices and study the evolution of its bottom edge, using the theory of Markov functions; we also obtain a Feynman--Kac interpretation of Whittaker functions.
In~\S~\ref{sec:WhittakerMeasures}, we define {matrix Whittaker measures} (through a Whittaker integral identity) and prove that they naturally arise as fixed-time laws in the aforementioned triangular process under a singular initial configuration.
To do so, we need a Laplace approximation of Whittaker functions, which can be justified by solving a constrained minimisation problem for certain energy functions of matrix arguments on directed graphs: this is the content of~\S~\ref{sec:minimisation}.
In Appendix~\ref{app:cauchyLittlewood}, we give a proof of the Cauchy-Littlewood identity for Schur functions that resembles our proof of the Whittaker integral identity.
In Appendix~\ref{app:markovFunctions}, we review the theory of Markov functions for inhomogeneous discrete-time Markov processes.
Finally, in Appendix~\ref{app:convergenceLemma}, we prove a convergence lemma related to weak convergence of probability measures.

\subsection{Notation and preliminary notions}
\label{subsec:notation}

Here we introduce some notation and preliminary notions that we use throughout this work.
For background and proofs, we refer to~\cite{hornJohnson13, terras16}.

\subsubsection*{Positive definite matrices}
Let $\pos_d$ be the set of all $d\times d$ \emph{positive definite} matrices, i.e.\ $d\times d$ real symmetric matrices with positive eigenvalues.
Throughout this article, for $x\in\pos_d$, we denote by $\abs{x}$ the determinant of $x$ and by $\tr[x]$ its trace.

The following properties hold:
\begin{itemize}
\item $x\in\pos_d$ if and only if $x^{-1} \in \pos_d$;
\item if $x\in\pos_d$ and $\lambda>0$, then $\lambda x \in \pos_d$;
\item if $x,y\in\pos_d$, then $x+y\in\pos_d$ (but in general $xy\notin \pos_d$);
\item $x-y\in \pos_d$ if and only if $y^{-1} - x^{-1} \in\pos_d$.
\end{itemize}

For $x\in\pos_d$, there exists a unique $y\in\pos_d$ such that $y^2 = x$; we denote such a $y$ by $x^{1/2}$.

For any $y\in\pos_d$, we define the (noncommutative) `multiplication operation' by $y$ as
\begin{equation}
\label{eq:mult}
T_y\colon \pos_d\to\pos_d \, , \qquad
T_y(x) := y^{1/2} x y^{1/2} \, , \qquad
x\in\pos_d \, .
\end{equation}
{Such a symmetrised product will be used to construct a multiplicative random walk on $\pos_d$ (see Definition~\ref{def:RW} and Remark~\ref{rem:invRW} below).}

We also denote by $I_d$ and $0_d$ the $d\times d$ identity matrix and zero matrix, respectively.

\subsubsection*{Measure and integration on $\pos_d$}

Let $\GL_d$ be the group of $d\times d$ invertible real matrices.
Define the measure $\mu$ on $\pos_d$ by
\begin{equation}
\label{eq:measureMu}
\mu(\diff x) := \abs{x}^{-\frac{d+1}{2}}\prod_{1\leq i\leq j\leq d}\diff x_{i,j} \, ,
\end{equation}
where $\diff x_{i,j}$ is the Lebesgue measure on $\mathbb{R}$ in the variable $x_{i,j}$.
Such a measure is the \emph{$\GL_d$-invariant measure} on $\pos_d$, in the sense that
\begin{align*}
\int_{\pos_d}f\big(a^{\top} x a\big)\mu(\diff x)=\int_{\pos_d}f(x)\mu(\diff x)
\end{align*}
for all $a\in \GL_d$ and for all suitable functions $f$.
In other words, $\mu$ is invariant under the group action of $\GL_d$ on $\pos_d$
\[
\GL_d \times \pos_d \to \pos_d \, , \qquad\qquad
(a,x) \mapsto a^{\top}xa \, .
\]
Furthermore, the measure $\mu$ is preserved under the involution $x\mapsto x^{-1}$.

{\subsubsection*{Wishart distributions and gamma functions}
For $\alpha > \frac{d-1}{2}$, we will refer to the \emph{($d$-variate) Wishart distribution} with parameter $\alpha$ as the probability measure
\begin{align}
\label{eq:Wishart}
\frac{1}{\Gamma_{d}(\alpha)}
\abs{x}^{\alpha}
\e^{-\tr [x]}
\mu(\diff x)
\end{align}
on $\pos_d$, {where $\Gamma_{d}(\alpha)$ is the \emph{$d$-variate gamma function}, i.e.}
\begin{equation*}
\Gamma_{d}(\alpha)
:= \int_{\pos_d} \mu(\diff x) \abs{x}^{\alpha} \e^{-\tr[x]}
= \int_{\pos_d} \mu(\diff x) \abs{x}^{-\alpha} \e^{-\tr[x^{-1}]}
= \pi^{\frac{d(d-1)}{4}} \prod_{k=1}^d \Gamma\left(\alpha - \frac{k-1}{2} \right) .
\end{equation*}
The inverse of a Wishart matrix with parameter $\alpha$ has the distribution
\begin{align}
\label{eq:invWishart}
\frac{1}{\Gamma_{d}(\alpha)}
\abs{x}^{-\alpha}
\e^{-\tr[x^{-1}]}
\mu(\diff x)
\end{align}
on $\pos_d$.
We will refer to the latter as the \emph{($d$-variate) inverse Wishart distribution} with parameter $\alpha$.}

\subsubsection*{Kernels and integral operators}
Let $(S, \mathcal{S})$ and $(T, \mathcal{T})$ be two measurable spaces.
Let $\meas{\mathcal{S}}$ denote the set of complex-valued measurable functions on $(S, \mathcal{S})$.
For our purposes, a \emph{kernel} from $T$ to $S$ will be a map $L \colon T\times\mathcal{S}\to\C$ such that, for each $t\in T$, $L(t;\cdot)$ is a (complex) measure on $(S,\mathcal{S})$ and, for each $A\in\mathcal{S}$, $L(\cdot; A)$ is an element of $\meas{\mathcal{T}}$.
The kernel $L$ can be also, alternatively, thought of as an \emph{integral operator}
\begin{align}
\label{eq:integralOperator}
L\colon \meas{\mathcal{S}} \to \meas{\mathcal{T}} \, , \qquad
L f(t)
:=\int_{S}L(t;\diff s) f(s) \qquad
\text{for } f\in \meas{\mathcal{S}} \, , \,\, t\in T \, ,
\end{align}
whenever the integral is well defined.
Clearly, the composition of kernels/operators yields another kernel/operator; such a composition is associative but, in general, not commutative.
When the complex measure $L(t; \cdot)$ is a probability measure for all $t\in T$, we will talk about \emph{Markov kernels/operators}.

Throughout this article, the measurable spaces will be usually Cartesian powers of $\pos_d$ (which we denote by $\pos_d^k$, $k\geq 1$), with their Borel sigma-algebras.
Moreover, for a kernel $L$ from $\pos_d^k$ to $\pos_d^\ell$, the measure $L(t; \cdot)$ will be, in most cases, absolutely continuous with respect to the reference product measure $\mu^{\otimes \ell}$ on $\pos_d^\ell$, for any $t\in\pos_d^k$; with a little abuse of notation, we will then also write $s\mapsto L(t;s)$ for the corresponding density (a measurable function on $\pos_d^\ell$).

\section{Whittaker functions}
\label{sec:WhittakerFunctions}

In this section we define Whittaker functions of matrix arguments following~\cite{oConnell21}, and then extend them to a further level of generality.
Notice also that the kernels~\eqref{eq:kerK} and~\eqref{eq:kerP} defined below are matrix versions of certain kernels defined in~\cite[\S~3.1]{corwinEtAl14} and~\cite[\S~2]{oConnellSeppalainenZygouras14} (see also references therein).

\subsection{Whittaker functions of matrix arguments}
\label{subsec:WhittakerFn}

We define Whittaker functions of matrix arguments as integrals over `triangular arrays' of $d\times d$ positive definite matrices.
For $N\geq 1$, denote by $\trian{N}{d} := \pos_d \times \pos_d^2 \times \dots \times \pos_d^{N}$ the set of height-$N$ triangular arrays
\begin{equation}
\label{eq:triangularArray}
x=(x^1,\dots,x^N)=(x^i_j)_{1\leq j\leq i\leq N} \, ,
\end{equation}
where $x^i=(x^i_1, \dots, x^i_i)\in \pos_d^i$ will be referred to as the $i$-th row of $x$, for $1\leq i\leq N$.
For $\lambda =(\lambda_1,\dots,\lambda_N) \in \C^N$ and $x\in\trian{N}{d}$, let
\begin{align}
\label{eq:type_Whittaker}
\Delta^N_{\lambda}(x) &:= 
\big\lvert x^1_1 \big\rvert^{-\lambda_1}
\prod_{i=2}^N \left( \frac{\big\lvert x^i_1 \cdots x^i_i\big\rvert}{\big\lvert x^{i-1}_1 \cdots x^{i-1}_{i-1}\big\rvert} \right)^{-\lambda_i} , \\
\label{eq:energyFn_Whittaker}
\Phi^N(x) &:= \sum_{i=1}^{N-1} \sum_{j=1}^i \left(\tr\left[x^{i+1}_{j+1} (x^i_j)^{-1}\right] + \tr\left[x^i_j (x^{i+1}_j)^{-1}\right]\right) .
\end{align}
For a graphical representation of the array~\eqref{eq:triangularArray} and of the `energy function' $\Phi^N$, see Fig.~\ref{fig:energyWhittaker}.
\begin{figure}
\centering
\begin{tikzpicture}[scale=1]

\node (x11) at (1,-1) {$x^1_1$};
\node (x21) at (2,-2) {$x^2_1$};
\node (x22) at (0,-2) {$x^2_2$};
\node (x31) at (3,-3) {$x^3_1$};
\node (x32) at (1,-3) {$x^3_2$};
\node (x33) at (-1,-3) {$x^3_3$};
\node (x41) at (4,-4) {$x^4_1$};
\node (x42) at (2,-4) {$x^4_2$};
\node (x43) at (0,-4) {$x^4_3$};
\node (x44) at (-2,-4) {$x^4_4$};

% arrows
\draw[->] (x22) -- (x11);
\draw[->] (x11) -- (x21);
\draw[->] (x33) -- (x22);
\draw[->] (x22) -- (x32);
\draw[->] (x32) -- (x21);
\draw[->] (x21) -- (x31);
\draw[->] (x44) -- (x33);
\draw[->] (x33) -- (x43);
\draw[->] (x43) -- (x32);
\draw[->] (x32) -- (x42);
\draw[->] (x42) -- (x31);
\draw[->] (x31) -- (x41);

\end{tikzpicture}
\caption{Graphical representation of a 'triangular' array $x\in \trian{N}{d}$ as in~\eqref{eq:triangularArray}, for $N=4$.
Each row $x^i$, $1\leq i\leq N$, consists of the matrices $(x^i_1,\dots,x^i_i)$, read from right to left.
The arrows refer to the
energy function $\Phi^N(x)$ in~\eqref{eq:energyFn_Whittaker}, where every summand $\tr[ab^{-1}]$ corresponds to an arrow pointing from $a$ to $b$ in the figure.}
\label{fig:energyWhittaker}
\end{figure}
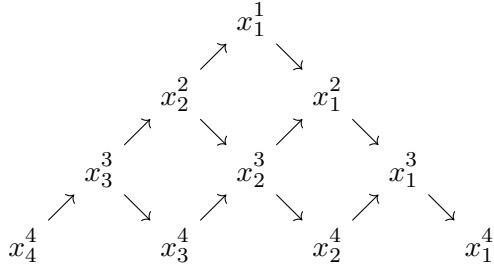
For $z=(z_1,\dots,z_N)\in\pos_d^N$, let $\trian{N}{d}(z) \subset \trian{N}{d}$ be the set of all height-$N$ triangular arrays $x$ with $N$-th row $x^N = z$.
We define the \emph{Whittaker function} $\psi^N_{\lambda}(z)$ with argument $z\in \pos_d^N$ and parameter $\lambda \in \C^N$ as
\begin{equation}
\label{eq:WhittakerClosedForm}
\psi^N_{\lambda}(z)
:= \int_{\trian{N}{d}(z)}
\Bigg(\prod_{i=1}^{N-1} \prod_{j=1}^i \mu(\diff x^i_j)\Bigg)
\Delta^N_{\lambda}(x) \e^{-\Phi^N(x)} \, .
\end{equation}
Notice that, for $N=1$, the expression above reduces to $\psi^1_{\lambda}(z) = \abs{z}^{-\lambda}$.
As proved in~\cite{oConnell21}, the integral~\eqref{eq:WhittakerClosedForm} is absolutely convergent for all $\lambda\in\C^N$, so that Whittaker functions are well defined.

For our purposes, it is convenient to rewrite Whittaker functions in terms of certain kernels that we now introduce.
For $N\geq 1$, $\lambda\in\C^N$ and $x\in\trian{N}{d}$, define the kernel
\begin{equation}
\label{eq:kerSigma}
\Sigma^N_{\lambda}(x^N; \diff x^{1:(N-1)})
:= \Delta^N_{\lambda}(x) \e^{-\Phi^N(x)}
\prod_{i=1}^{N-1} \prod_{j=1}^i \mu(\diff x^i_j) \, ,
\end{equation}
where, as always from now on, $i:j$ denotes the tuple $(i,i+1,\dots,j-1, j)$ for $i\leq j$, so that $x^{1:(N-1)}\in\trian{N-1}{d}$ is the triangular array consisting of the first $N-1$ rows of $x$.
Notice that, for $N=1$, \eqref{eq:kerSigma} reduces to $\Sigma^1_{\lambda}(z; \emptyset) = \abs{z}^{-\lambda}=\psi^1_{\lambda}(z)$.
For $z\in\pos_d^N$, let us also define the kernel
\begin{equation}
\label{eq:kerSigma_tilde}
\tilde{\Sigma}^N_{\lambda}(z; \diff x)
:= \delta(z; \diff x^N) \, \Sigma^N_{\lambda}(x^N; \diff x^{1:(N-1)}) \, ,
\end{equation}
where $\delta$ is the Dirac delta kernel on $\pos_d^N$.
Then, the Whittaker function~\eqref{eq:WhittakerClosedForm} can be written as
\begin{equation}
\label{eq:Whittaker_triangle}
\psi^N_{\lambda}(z)
= \int_{\trian{N-1}{d}} \Sigma^{N}_{\lambda}(z; \diff x)
= \int_{\trian{N}{d}} \tilde{\Sigma}^{N}_{\lambda}(z; \diff x) \, .
\end{equation}
Moreover, for $N\geq 2$, $b\in\C$, $z=(z_1,\dots,z_N)\in \pos_d^N$, and $y=(y_1,\dots,y_{N-1}) \in \pos_d^{N-1}$, let
\begin{equation}
\label{eq:kerK}
K^N_{b}(z; y)
= \left(\prod_{i=1}^N \abs{z_i}^{-b}\right)
\prod_{j=1}^{N-1} \abs{y_j}^{b}
\e^{ - \tr\left[z_{j+1} y_j^{-1} + y_j z_j^{-1}\right] } \, .
\end{equation}
We will usually regard~\eqref{eq:kerK} as a kernel by setting $K^N_b(z;\diff y) := K^N_b(z;y) \mu^{\otimes (N-1)}(\diff y)$.
We then have, for $\lambda\in\C^N$, $z\in \pos_d^N$, and $x\in\trian{N-1}{d}$,
\begin{equation}
\label{eq:kerSigma_recursive}
\begin{split}
\Sigma^N_{\lambda}(z;\diff x) &=
K^N_{\lambda_N}(z; \diff x^{N-1})
K^{N-1}_{\lambda_{N-1}}(x^{N-1}; \diff x^{N-2}) \cdots
K^2_{\lambda_2}(x^2;\diff x^1)
\psi^1_{\lambda}(x^1) \\
&= K^N_{\lambda_N}(z; \diff x^{N-1})
\Sigma^{N-1}_{(\lambda_1,\dots,\lambda_{N-1})}(x^{N-1}; \diff x^{1:(N-2)}) \, .
\end{split}
\end{equation}
This yields a recursive definition of Whittaker functions: 
\begin{equation}
\label{eq:Whittaker}
\psi^N_{\lambda}(z)
= \begin{cases}
\abs{z}^{-\lambda} &N=1 \, , \\
K^N_{\lambda_N}\psi^{N-1}_{(\lambda_1,\dots,\lambda_{N-1})}(z)
= K^N_{\lambda_N} K^{N-1}_{\lambda_{N-1}} \cdots K^2_{\lambda_2} \psi^1_{\lambda_1}(z) &N\geq 2 \, .
\end{cases}
\end{equation}

\subsection{A generalisation of Whittaker functions}
\label{subsec:Whittaker_gen}

We now introduce a generalisation of Whittaker functions of matrix arguments,
which will naturally emerge in~\S~\ref{subsec:bottomEdge} and, in the scalar case $d=1$, corresponds to the one considered in~\cite{oConnellSeppalainenZygouras14}.
These generalised Whittaker functions are integrals over \emph{trapezoidal} arrays of positive definite matrices, similarly to how the Whittaker functions of \S~\ref{subsec:WhittakerFn} are defined as integrals over triangular arrays.

Let $n\geq N\geq 1$ and denote by
\[
\trian{N,n}{d} := \pos_d \times \pos_d^2 \times \dots \times \pos_d^{N} \times \underbrace{\pos_d^N \times \dots \times \pos_d^N}_{n-N \text{ times}}
\]
the set of trapezoidal arrays
\begin{equation}
\label{eq:trapezoidalArray}
x=(x^1,\dots,x^n)=(x^i_j\colon 1\leq i\leq n, \, 1\leq j\leq i \wedge N) \, ,
\end{equation}
with $i$-th row $x^i=(x^i_1, \dots, x^i_{i \wedge N})\in \pos_d^{i \wedge N}$, for $1\leq i\leq n$ (here $i \wedge N$ denotes the minimum between $i$ and $N$).
For $\lambda\in\C^n$, $x\in\trian{N,n}{d}$ and $s\in\pos_d$, let
\begin{align}
\label{eq:type_WhittakerExt}
\Delta^{N,n}_{\lambda}(x) &:=
\big\lvert x^1_1 \big\rvert^{-\lambda_1}
\prod_{i=2}^N \left( \frac{\big\lvert x^i_1 \cdots x^i_i\big\rvert}{\big\lvert x^{i-1}_1 \cdots x^{i-1}_{i-1}\big\rvert} \right)^{-\lambda_i}
\prod_{i=N+1}^n \left( \frac{\big\lvert x^i_1 \cdots x^i_N\big\rvert}{\big\lvert x^{i-1}_1 \cdots x^{i-1}_{N} \big\rvert} \right)^{-\lambda_i} , \\
\label{eq:energyFnWhittakerExt}
\Phi^{N,n}_s(x) &:= \tr\big[s (x^N_N)^{-1}\big]
+ \sum_{i=1}^{n-1} \left( \sum_{j=1}^{i\wedge (N-1)} \tr\big[x^{i+1}_{j+1} (x^i_j)^{-1}\big]
+ \sum_{j=1}^{i\wedge N} \tr\big[x^i_j (x^{i+1}_j)^{-1}\big] \right) .
\end{align}
See Fig.~\ref{fig:energyWhittakerExt} for a graphical representation of the array~\eqref{eq:trapezoidalArray} and of the energy function $\Phi^{N,n}_s$.
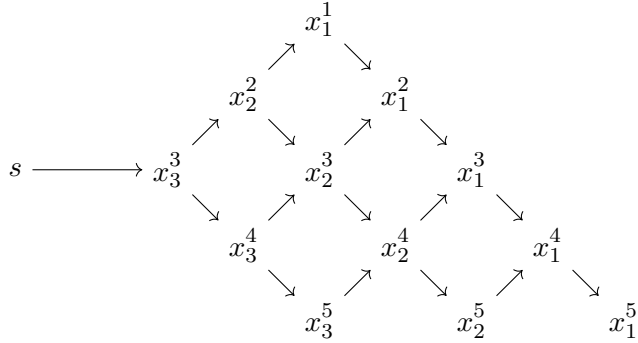
\begin{figure}
\centering
\begin{tikzpicture}[scale=1]

\node (x11) at (1,-1) {$x^1_1$};
\node (x21) at (2,-2) {$x^2_1$};
\node (x22) at (0,-2) {$x^2_2$};
\node (x31) at (3,-3) {$x^3_1$};
\node (x32) at (1,-3) {$x^3_2$};
\node (x33) at (-1,-3) {$x^3_3$};
\node (x41) at (4,-4) {$x^4_1$};
\node (x42) at (2,-4) {$x^4_2$};
\node (x43) at (0,-4) {$x^4_3$};
\node (x51) at (5,-5) {$x^5_1$};
\node (x52) at (3,-5) {$x^5_2$};
\node (x53) at (1,-5) {$x^5_3$};
\node (s) at (-3,-3) {$s$};

% arrows
\draw[->] (x22) -- (x11);
\draw[->] (x11) -- (x21);
\draw[->] (x33) -- (x22);
\draw[->] (x22) -- (x32);
\draw[->] (x32) -- (x21);
\draw[->] (x21) -- (x31);
\draw[->] (x33) -- (x43);
\draw[->] (x43) -- (x32);
\draw[->] (x32) -- (x42);
\draw[->] (x42) -- (x31);
\draw[->] (x31) -- (x41);
\draw[->] (x43) -- (x53);
\draw[->] (x53) -- (x42);
\draw[->] (x42) -- (x52);
\draw[->] (x52) -- (x41);
\draw[->] (x41) -- (x51);
\draw[->] (s) -- (x33);

\end{tikzpicture}
\caption{Graphical representation of a trapezoidal array $x\in \trian{N,n}{d}$ as in~\eqref{eq:trapezoidalArray}, for $N=3$ and $n=5$.
The additional variable $s$ appears in the definition~\eqref{eq:energyFnWhittakerExt} of the energy function $\Phi^{N,n}_s(x)$, in which every summand $\tr[ab^{-1}]$ corresponds to an arrow pointing from $a$ to $b$ in the figure.}
\label{fig:energyWhittakerExt}
\end{figure}
For $z\in\pos_d^N$, let $\trian{N,n}{d}(z) \subset \trian{N,n}{d}$ be the set of all trapezoidal arrays $x$ with $n$-th row $x^n = z$.
For $n\geq N$,  $\lambda\in\C^n$, $s\in\pos_d$ and $z\in \pos_d^N$, we define
\begin{equation}
\label{eq:WhittakerTrapezoid}
\psi^{N,n}_{\lambda;s}(z)
:= \int_{\trian{N,n}{d}(z)}
\left( \prod_{i=1}^{n-1} \prod_{j=1}^{i\wedge N}  \mu(\diff x^i_j) \right)
\Delta^{N,n}_{\lambda}(x)
\e^{-\Phi_s^{N,n}(x)} \, .
\end{equation}
Notice that, {if $s=0_d$} and $n=N$, $\psi^{N,N}_{\lambda;0}= \psi^N_{\lambda}$ corresponds to the Whittaker function defined in~\eqref{eq:WhittakerClosedForm}.
The absolute convergence of the integral in~\eqref{eq:WhittakerTrapezoid}, for all $\lambda\in \C^n$, can be shown by adapting the proof of~\cite[Prop.~6-(i)]{oConnell21}.

Let us now give an equivalent representation of these generalised Whittaker functions.
The following kernel will play a central role in this work.
For $a\in\C$ and $z,\tilde{z}\in \pos_d^N$, set
\begin{equation}
\label{eq:kerP}
\begin{split}
P^N_{a}(z; \tilde{z})
:= \left(\prod_{i=1}^{N-1}
\e^{-\tr[\tilde{z}_{i+1} z_i^{-1}]}\right)
\prod_{j=1}^{N}
\abs{z_j \tilde{z}_j^{-1}}^{a}
\e^{- \tr\left[z_j \tilde{z}_j^{-1} \right] }.
\end{split}
\end{equation}
We will see $P^N_{a}(z;\tilde{z})$ as a measure in either of the two arguments, defining
\begin{equation}
\label{eq:kerP_inverse}
P^N_{a}(z; \diff\tilde{z}) := P^N_{a}(z; \tilde{z}) \mu^{\otimes N}(\diff \tilde{z})
\qquad\text{and}\qquad
\cev{P}^N_{a}(z; \diff \tilde{z}) := P^N_{a}(\tilde{z};z) \mu^{\otimes N}(\diff \tilde{z}) \, .
\end{equation}
We then have
\begin{equation}
\label{eq:WhittakerExt}
\psi^{N,n}_{\lambda;s}(z)
:= 
\begin{dcases}
\e^{-\tr[s z_N^{-1}]} \psi^N_{\lambda}(z) & n=N \, , \\
\cev{P}^N_{\lambda_n}
\psi^{N,n-1}_{(\lambda_1,\dots,\lambda_{n-1});s}(z)
= \cev{P}^N_{\lambda_n} \cev{P}^N_{\lambda_{n-1}} \cdots \cev{P}^N_{\lambda_{N+1}} \psi^{N,N}_{(\lambda_1,\dots,\lambda_N);s}(z)
&n>N \, .
\end{dcases}
\end{equation}

We also record here two relations between the kernels~\eqref{eq:kerK} and~\eqref{eq:kerP}, which follow directly from the definitions:
\begin{align}
\label{eq:kerP-K}
K^N_a(z;y)
&= \abs{s}^{-a}
\e^{\tr[s z_N^{-1}]}
P^N_a(y_1,\dots,y_{N-1},s;z)
 \, , \\
\label{eq:kerK-P}
K^N_a(z;y)
&= \abs{z_N}^{-a} \e^{-\tr[z_N y_{N-1}^{-1}]}
P^{N-1}_{a}(y;z_1,\dots,z_{N-1}) \, ,
\end{align}
for $y=(y_1,\dots,y_{N-1})\in \pos_d^{N-1}$, $s\in\pos_d$, and $z=(z_1,\dots,z_N) \in \pos_d^N$.
Taking $a=\lambda_N$ in~\eqref{eq:kerK-P}, multiplying both sides by $\psi^{N-1}_{(\lambda_1,\dots,\lambda_{N-1})}(y)$, integrating over $\pos_d^{N-1}$ with respect to $\mu^{\otimes (N-1)}(\diff y)$, and using~\eqref{eq:Whittaker} and~\eqref{eq:WhittakerExt}, we obtain the identity
\begin{equation}
\label{eq:Whittaker_triangle&trapezoid}
\psi^N_{\lambda}(z)
= \abs{z_N}^{-\lambda_N} \psi^{N-1,N}_{\lambda;z_N}(z_1,\dots,z_{N-1}) \, .
\end{equation}

{\begin{remark}
{Let us mention} that we anticipate the function $\psi^{N,n}_{\lambda;s}$ to be symmetric in the parameters $\lambda_1,\dots, \lambda_n$.
This is not obvious from the definition, but it is suggested by an integral identity of Whittaker functions of matrix arguments that will be proven later on (see~\eqref{eq:stade_matrix}).
As argued in~\cite[\S~7.1]{oConnell21}, this symmetry is true at least in the case $N=n=2$.
Moreover, it is known for $d=1$ and arbitrary $n,N$; see, for example, \cite{KharchevLebedev01}, \cite{GerasimovLebedevOblezin08} and \cite[pp.~369--370]{oConnellSeppalainenZygouras14}.
\end{remark}}

\section{Markov dynamics}
\label{sec:MarkovDynamics}

In this section, we define a Markov process $X$ on triangular arrays, which can be viewed as a system of interacting random walks on $\pos_d$.
Next, we prove intertwining relations between certain {transition} kernels related to this process.
This implies, via the theory of Markov functions, that, under certain random initial configurations, the bottom edge of the triangular process $X$ has an autonomous stochastic evolution.
A consequence of these results is that Whittaker functions of matrix arguments are eigenfunctions of certain integral operators and, thereupon, admit a Feynman--Kac interpretation.

\subsection{Interacting Markov dynamics on triangular arrays}
\label{subsec:MarkovDynamics}

Let $\ort_d$ be the real orthogonal group in dimension $d$.
Recall that a random matrix $Y$ in $\pos_d$ is said to be $\ort_d$-invariant (or orthogonally invariant) if $k^{\top} Y k$ has the same distribution of $Y$, for every $k\in\ort_d$.

\begin{definition}
\label{def:RW}
Let $(W(n))_{n\geq 1}$ be a family of independent and $\ort_d$-invariant random matrices in $\pos_d$.
The \emph{$\GL_d$-invariant random walk on $\pos_d$} with initial state $r\in\pos_d$ and increments $(W(n))_{n\geq 1}$ is the $\pos_d$-valued process $R=(R(n))_{n\geq 0}$ such that $R(0)=r$ and
\begin{equation}
\label{eq:RW}
R(n)
:= T_{R(n-1)}(W(n))
= R(n-1)^{1/2} W(n) R(n-1)^{1/2} \, ,
\qquad
n\geq 1 \, .
\end{equation}
\end{definition}

\begin{remark}
\label{rem:invRW}
The random walk $R$ of Definition~\ref{def:RW} is indeed \emph{$\GL_d$-invariant}, in the sense that the conjugated walk $(g^{\top} R(n) g)_{n\geq 0}$ has the same transition kernels for any choice of $g\in\GL_d$ (cf.~\cite[\S~3]{aristaBisiOConnell23}).
Instead of~\eqref{eq:RW}, one could consider a different process through the alternative symmetrisation
\[
R'(n)
:= T_{W(n)}(R'(n-1))
= W(n)^{1/2} R'(n-1) W(n)^{1/2} \, .
\]
One can check that the resulting random walk $R'$ is $\ort_d$-invariant, but in general not $\GL_d$-invariant.
In principle, one could proceed to obtain analogous results to those presented in the present article using this alternative symmetrisation (for a similar approach in the continuous Brownian setting, see~\cite[Prop.~3.5]{oConnell21}).
However, from our point of view, the choice~\eqref{eq:RW} is the most natural and leads to more explicit transition kernels throughout.
\end{remark}

It is well known that the {Wishart distribution~\eqref{eq:Wishart} and the inverse Wishart distribution~\eqref{eq:invWishart} are} $\ort_d$-invariant.
In this article, we will focus on $\GL_d$-invariant random walks with inverse Wishart increments. 

Recall from definition~\eqref{eq:kerP} that $P^1_a(z;\diff \tilde{z})= \big\lvert z\tilde{z}^{-1}\big\rvert^{a} \e^{-\tr[z\tilde{z}^{-1}]} \mu(\diff \tilde{z})$ for $a\in\C$.
Using a straightforward change of variables, we see that, if $\Re(a)>\frac{d-1}{2}$,
\begin{equation}
\label{eq:P_normalization_N=1}
\int_{\pos_d} P^1_{a}(z;\diff \tilde{z}) = \Gamma_d(a)
\qquad\qquad \text{for any } z\in\pos_d \, .
\end{equation}
Define then the renormalised kernel
\begin{equation}
\label{eq:kerP_normal_N=1}
\overline{P}^1_a(z;\diff \tilde{z}) := \frac{1}{\Gamma_d(a)} P^1_a(z;\diff \tilde{z}) \, .
\end{equation}
It is immediate to see that the (time-homogeneous) $\GL_d$-invariant random walk on $\pos_d$ with inverse Wishart increments of parameter $a>\frac{d-1}{2}$ has transition kernel $\overline{P}^1_a$.

We now define a discrete-time Markov process $X=(X(n))_{n\geq 0}$ on the set $\trian{N}{d}$ of height-$N$ triangular arrays {whose components are elements of $\pos_d$}.

\begin{definition}
\label{def:triangularProcess}
Fix a sequence of real parameters $\alpha=(\alpha(n))_{n\geq 1}$, an integer $N\geq 1$,  and a real $N$-tuple $\beta=(\beta^1,\dots,\beta^N)$ such that $\alpha(n) + \beta^i >(d-1)/2$ for all $n,i$.
Denote by $\alpha(n)+\beta$ the $N$-tuple $(\alpha(n)+\beta^1,\dots,\alpha(n)+\beta^N)$.
For $n\geq 1$ and $1\leq j\leq i\leq N$, let $W^i_j(n)$ be an inverse Wishart random matrix with parameter $\alpha(n)+\beta^i$ (the same parameter across $j$); assume further that all these random matrices are independent of each other.
We define the process $X=(X(n))_{n\geq 0}$, where $X(n) = (X^i_j(n))_{1\leq j\leq i\leq N}$ is a random element of $\trian{N}{d}$, as follows: given an initial state $X(0)$ in $\trian{N}{d}$, for $n\geq 1$ we set recursively
\begin{align}
\label{eq:triangularProcess}
X^i_j(n) :=
\begin{cases}
T_{X^1_1(n-1)}(W^1_1(n)) &1=j=i \\
T_{X^{i-1}_1(n)+ X^i_1(n-1)}(W^i_1(n)) &1=j<i\leq N \\
\left[X^{i-1}_{i-1}(n-1)^{-1} + T_{X^i_i(n-1)}(W^i_i(n))^{-1} \right]^{-1} &1<j=i\leq N \\
\left[X^{i-1}_{j-1}(n-1)^{-1} + T_{X^{i-1}_j(n) + X^i_j(n-1)}(W^i_j(n))^{-1} \right]^{-1} &1<j<i\leq N
\end{cases}
\end{align}
The $i$-tuple $X^i:=(X^i_1,\dots,X^i_i)$ will be referred to as the $i$-th \emph{row} of $X$.
\end{definition}

The fact that each $X^i_j(n)$ takes values in $\pos_d$ follows by standard properties of positive definite matrices (cf.~\S~\ref{subsec:notation}).
Notice that, adopting the convention $X^i_0(n)^{-1} = X^i_{i+1}(n)=0_d$ for all $i\geq 0$ and $n\geq 0$, then the last formula in~\eqref{eq:triangularProcess} can be taken as the definition of $X^i_j(n)$ for all $1\leq j\leq i\leq N$.

The {dynamic} on $\trian{N}{d}$ defined by~\eqref{eq:triangularProcess} implies that the `top particle' $X^1_1$ evolves as a $\GL_d$-invariant random walk in $\pos_d$ with inverse Wishart increments $(W^1_1(n))_{n\geq 1}$.

Furthermore, the `right edge' process {$(X^1_1,X^2_1,\dots,X^N_1)$} {equals the system $(Z^1,\dots,Z^N)$ of random particles in $\pos_d$ with one-sided interactions defined in~\eqref{eq:rightEdge1}-\eqref{eq:rightEdge2}, where the random weight $V^i(n)$ equals $W^i_1(n)$.}

{The `left edge' process $(X^1_1,X^2_2,\dots,X^N_N)$ also evolves as a system of particles in $\pos_d$ with one-sided interactions, as we now explain.
Set $L^i(n):= X^i_i(n)^{-1}$ and $U^i(n):= W^i_i(n)^{-1}$ for all $1\leq i\leq N$ and $n\geq 0$.
Then, $U^i(n)$ has the Wishart distribution with parameter $\alpha(n)+\beta^i$, and the process $L=(L^1,\dots,L^N)$ satisfies the recursions
\begin{align}
\label{eq:leftEdge1}
L^1(n) &= T_{L^1(n-1)}(U^1(n) ) \, , \\
\label{eq:leftEdge2}
L^i(n) &= L^{i-1}(n-1) + T_{L^i(n-1)} (U^i(n)), \quad 2\leq i\leq N \, .
\end{align}
Under the (singular) initial configuration
\begin{equation}
\label{eq:stepInitialConf_matrix_leftEdge}
L^1(0)=I_d \, , \qquad
L^2(0) = \cdots = {L}^N(0)=0_d \, ,
\end{equation}
one can see by induction that {$L^i(n)=0_d$ for all {$n<i-1$} and $L^i(i-1)=I_d$}, while $L^i(i)$ reduces to a sum of independent Wishart matrices:
\[
L^i(i)=U^1(1)+U^2(2)+\dots+U^i(i) \, , \quad 1\leq i\leq N \, .
\]
In particular, $L^i(i)$ has the Wishart distribution with parameter $\sum_{j=1}^i(\alpha(j)+\beta^j)$.
}

{
\begin{remark}
\label{rem:d=1}
We make a few remarks about various specialisations of the process $X$ and related Markov dynamics:
\begin{enumerate}
\item The interacting diffusion model on positive definite matrices studied in~\cite{oConnell21} (see also~\cite[\S~9]{oConnell12} for the $d=1$ case) can be regarded as a continuous-time analogue of the process $X$ defined in~\eqref{eq:triangularProcess}.
\item It seems that even the $d=1$ case of the dynamic~\eqref{eq:triangularProcess} has not been explicitly considered elsewhere.
It is related, even though not identical, to the process constructed in~\cite{corwinEtAl14} via the geometric Robinson--Schensted--Knuth correspondence; see the discussion in the introduction for further details.
\item For $d=1$, under the `step' initial configuration, the right edge can be regarded as a process of log-gamma polymer partition functions; see~\eqref{eq:stepInitialConf}-\eqref{eq:polymer} and the discussion therein.
\item For $d=1$, under the `step' initial configuration~\eqref{eq:stepInitialConf_matrix_leftEdge}, the left edge can be regarded as a process of strict-weak polymer partition functions in a gamma environment, studied in~\cite{oConnellOrtmann15, corwinSeppalainenShen15}.
A strict-weak path is a lattice path $\pi$ that, at each lattice site $(m,k)$, is allowed to head either horizontally to the right to $(m+1,k)$ or diagonally up-right to $(m+1,k+1)$.
It is easily seen that the process $L$ defined in~\eqref{eq:leftEdge1}-\eqref{eq:leftEdge2}, in the $d=1$ case, takes the closed form expression
\begin{equation}
\label{eq:strictWeak}
L^i(n) = \sum_\pi \prod_{e\in\pi} d_e \, ,
\end{equation}
where the sum is over all strict-weak paths $\pi$ from $(0,1)$ to $(n,i)$, the product is over all edges $e$ in the path $\pi$, and $d_e$ is a weight attached to the edge $e$ and defined as follows: $d_e:=1$ if $e$ is a diagonal edge from $(m,k)$ to $(m+1,k+1)$; $d_e:=U^{k}(m+1)$ (gamma distributed with parameter $\alpha(m+1)+\beta^k$) if $e$ is a horizontal edge from $(m,k)$ to $(m+1,k)$.
Formula~\eqref{eq:strictWeak} defines the \emph{strict-weak polymer partition function}.  
\item The $d=1$ case of~\eqref{eq:triangularProcess} is a `positive temperature' analogue (equivalently, a $(+,\times)$ version) of the process defined by
\[
\mathcal{X}^i_j(n) := \min\left(\mathcal{X}^{i-1}_{j-1}(n-1), \max\left(\mathcal{X}^{i-1}_j(n),\mathcal{X}^i_j(n-1)\right)+\mathcal{W}^i_j(n)\right) \, ,
\]
where $\mathcal{W}^i_j(n)$ are non-negative random variables representing jumps to the right (see e.g.~\cite{warrenWindridge09}).
Roughly speaking, particle $\mathcal{X}^i_j$ performs a random walk subject to certain interactions with other particles: it is pushed by $\mathcal{X}^{i-1}_j$ and blocked by $\mathcal{X}^{i-1}_{j-1}$.
\item Besides~\cite{warrenWindridge09}, other works~\cite{warren07, nordenstam10, borodinFerrari14, borodinCorwin14, BorodinPetrov16} studied, in various discrete and continuous settings, similar push-and-block dynamics on Gelfand-Tsetlin patterns driven by random updates with $N(N+1)/2$ degrees of freedom.
In particular, again in the case $d=1$, the process $X$ should correspond to a certain $q\to 1$ scaling limit of the $q$-Whittaker processes studied in~\cite{borodinCorwin14, BorodinPetrov16}.
\end{enumerate}
\end{remark}}

Motivated to obtain the explicit Markovian evolution of $X$, we now introduce the following kernels.
For $a\in\C$, $y=(y_1,\dots,y_{N-1})\in \pos_d^{N-1}$, $\tilde{y}=(\tilde{y}_1,\dots,\tilde{y}_{N-1})\in \pos_d^{N-1}$, $z=(z_1,\dots,z_N)\in \pos_d^N$, and $\tilde{z}=(\tilde{z}_1,\dots,\tilde{z}_N) \in \pos_d^N$, we set
\begin{align}
\label{eq:kerQ}
\begin{split}
Q^N_a(y,\tilde{y},z;\diff \tilde{z})
:=&
\prod_{j=1}^N \abs{ \left(\tilde{y}_j + z_j\right) \left(\tilde{z}_j^{-1} - y_{j-1}^{-1}\right) }^a
\e^{-\tr\left[(\tilde{y}_j + z_j)\left(\tilde{z}_j^{-1} - y_{j-1}^{-1}\right)\right]} \\
&\, \abs{I_d - \tilde{z}_j y_{j-1}^{-1}}^{-\frac{d+1}{2}}
\1_{\pos_d}\left(\tilde{z}_j^{-1} - y_{j-1}^{-1}\right)
\mu(\diff \tilde{z}_j) \, ,
\end{split}
\end{align}
with the convention $y_0^{-1}=\tilde{y}_N=0$.
Moreover, for $\lambda=(\lambda_1,\dots,\lambda_N) \in\C^N$, we set
\begin{equation}
\label{eq:kerPi}
\Pi^N_{\lambda}(x;\diff \tilde{x})
:= 
\begin{cases}
P^1_{\lambda}(x;\diff \tilde{x}) &\text{if } N=1 \, , \\ 
\Pi^{N-1}_{(\lambda_1,\dots,\lambda_{N-1})}(x^{1:(N-1)};\diff \tilde{x}^{1:(N-1)}) \,
Q^N_{\lambda_N}(x^{N-1}, \tilde{x}^{N-1}, x^N; \diff \tilde{x}^N) &\text{if } N\geq 2 \, ,
\end{cases}
\end{equation}
where $x \in \trian{N}{d}$ (resp., $\tilde{x} \in \trian{N}{d}$) is a height-$N$ triangular array of $d\times d$ positive definite matrices with $i$-th row $x^i \in \pos_d^i$ (resp., $\tilde{x}^i \in \pos_d^i$), according to the notation of \S~\ref{subsec:WhittakerFn}.
One can show (an analogous computation is made in the proof of Prop.~\ref{prop:intertwining}) that, if $\Re(a)>(d-1)/2$,
\begin{equation}
\label{eq:Q_normalisation}
\int_{\pos_d^N}
Q^N_{a}(y,\tilde{y},z; \diff \tilde{z})
= \Gamma_d(a)^N
\qquad\qquad \text{for any } y,\tilde{y}\in\pos_d^{N-1} \text{ and } z\in\pos_d^N \, .
\end{equation}
Using~\eqref{eq:P_normalization_N=1} and~\eqref{eq:Q_normalisation}, we see that, if $\Re(\lambda_i)>(d-1)/2$ for all $i$, then
\begin{equation}
\label{eq:Pi_normalisation}
\int_{\trian{N}{d}} \Pi^N_{\lambda}(x; \diff \tilde{x})
= \prod_{i=1}^N \Gamma_d(\lambda_i)^i
\qquad\qquad \text{for any } x\in\trian{N}{d} \, .
\end{equation}
Therefore, under the above conditions on the parameters, one can renormalise these kernels, so that they integrate to $1$:
\begin{align}
\label{eq:kerQ_normal}
\overline{Q}^N_{a}(y,\tilde{y},z; \diff \tilde{z})
&:= \frac{1}{\Gamma_d(a)^{N}} Q^N_a(y,\tilde{y},z; \diff \tilde{z}) \, , \\
\label{eq:kerPi_normal}
\overline{\Pi}^N_{\lambda}(x;\diff \tilde{x})
&:= \frac{1}{\prod_{i=1}^N \Gamma_d(\lambda_i)^i}
\Pi^N_{\lambda}(x;\diff \tilde{x}) \, .
\end{align}

The following result can be easily verified using the construction of $X$ in Definition~\ref{def:triangularProcess}.
\begin{proposition}
\label{prop:triangle_Markov}
Let $X$ as in Definition~\ref{def:triangularProcess}. Then, the conditional distribution of $X^N(n)$ given $X^{N-1}(n-1)=y$, $X^{N-1}(n)=\tilde{y}$ and $X^N(n-1)=z$, is $\overline{Q}^N_{\alpha(n)+\beta^N}(y,\tilde{y},z; \cdot )$.
Consequently, the process $X=(X(n))_{n\geq 0}$ is a time-inhomogeneous Markov process with state space $\trian{N}{d}$ and time-$n$ transition kernel $\overline{\Pi}^N_{\alpha(n)+\beta}$.
\end{proposition}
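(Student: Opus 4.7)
The plan is to proceed by induction on $N$, exploiting the recursive definition~\eqref{eq:kerPi} of $\Pi^N_\lambda$. The base case $N=1$ is immediate: the top particle $X^1_1$ evolves as a $\GL_d$-invariant random walk on $\pos_d$ with inverse Wishart increments of parameter $\alpha(n)+\beta^1$, and its transition kernel is precisely $\overline{P}^1_{\alpha(n)+\beta^1}=\overline{\Pi}^1_{\alpha(n)+\beta^1}$ by~\eqref{eq:kerP_normal_N=1}--\eqref{eq:kerPi_normal}. For the inductive step, I observe that by~\eqref{eq:triangularProcess} the row $X^N(n)$ depends on the past only through $y:=X^{N-1}(n-1)$, $\tilde{y}:=X^{N-1}(n)$, $z:=X^N(n-1)$, and on the fresh, mutually independent noise $W^N_1(n),\dots,W^N_N(n)$, which is independent of all the noise driving rows $1,\dots,N-1$. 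Hence the transition factorises over rows, and it suffices to show that, conditional on $(y,\tilde{y},z)$, the row $X^N(n)$ is distributed as $\overline{Q}^N_{\alpha(n)+\beta^N}(y,\tilde{y},z;\cdot)$.

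Given the conditioning, each $X^N_j(n)$ is a deterministic function of the single matrix $W^N_j(n)$, so the $N$ entries of the row are conditionally independent and their joint law factorises over $j$. Adopting the conventions $y_0^{-1}=0_d$, $\tilde{y}_N=0_d$, all four cases of~\eqref{eq:triangularProcess} can be written uniformly as
\begin{equation*}
\tilde{z}_j^{-1} - y_{j-1}^{-1} \;=\; T_{\tilde{y}_j+z_j}\bigl(W^N_j(n)\bigr)^{-1} , \qquad 1\le j\le N,
\end{equation*}
which simultaneously encodes the bijection between $W^N_j(n)\in\pos_d$ and the set of $\tilde{z}_j\in\pos_d$ with $\tilde{z}_j^{-1}-y_{j-1}^{-1}\in\pos_d$ (the constraint being vacuous for $j=1$). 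The heart of the proof is a change-of-variables computation for this bijection. I would split it into two steps: first $W\mapsto A:=T_{\tilde{y}_j+z_j}(W)$, which preserves $\mu$ by $\GL_d$-invariance; then $A\mapsto \tilde{z}_j$ via $\tilde{z}_j^{-1}=A^{-1}+y_{j-1}^{-1}$, viewed as the composition of the $\mu$-preserving inversion $A\mapsto A^{-1}$, a Lebesgue shift by $y_{j-1}^{-1}$, and a second inversion. Using the factorisation $A^{-1}=\tilde{z}_j^{-1}(I_d-\tilde{z}_jy_{j-1}^{-1})$, a direct computation yields $\mu(dA)=|I_d-\tilde{z}_jy_{j-1}^{-1}|^{-(d+1)/2}\mu(d\tilde{z}_j)$; transporting the inverse Wishart density~\eqref{eq:invWishart} through the two steps and simplifying produces exactly the $j$-th factor in~\eqref{eq:kerQ}, multiplied by $\Gamma_d(\alpha(n)+\beta^N)^{-1}$.

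Taking the product over $j=1,\dots,N$ yields $\overline{Q}^N_{\alpha(n)+\beta^N}(y,\tilde{y},z;\cdot)$, with total normalisation $\Gamma_d(\alpha(n)+\beta^N)^{-N}$ consistent with~\eqref{eq:Q_normalisation} (which can be read off from the same computation). Combined with the inductive hypothesis applied to the sub-array $X^{1:(N-1)}$, the recursive definition~\eqref{eq:kerPi_normal} then gives the claimed transition kernel $\overline{\Pi}^N_{\alpha(n)+\beta}$ for the full process $X$. The only point requiring genuine care is tracking the determinantal factors through the $A\mapsto\tilde{z}_j$ step, which is precisely what produces the Jacobian correction $|I_d-\tilde{z}_jy_{j-1}^{-1}|^{-(d+1)/2}$ appearing in~\eqref{eq:kerQ}; beyond this, the proof is a routine change of variables and I do not anticipate any deeper obstacle.
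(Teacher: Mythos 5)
Your proof is correct, and it supplies exactly the verification that the paper omits (the paper merely asserts that the proposition ``can be easily verified'' from Definition~\ref{def:triangularProcess}). The row-by-row factorisation, the conditional independence of the entries $X^N_j(n)$ given $(y,\tilde y,z)$, and the two-step change of variables — whose shift-plus-inversion step and Jacobian $\abs{I_d-\tilde z_j y_{j-1}^{-1}}^{-(d+1)/2}$ mirror the computation of $\mathfrak{J}$ in the paper's proof of Proposition~\ref{prop:intertwining} — all check out and reproduce the $j$-th factor of~\eqref{eq:kerQ} with the correct normalisation $\Gamma_d(\alpha(n)+\beta^N)^{-1}$.
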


\subsection{Intertwining relations}
\label{subsec:intertwining}

We will now show that the Markov {dynamic} on $X$ (see Definition~\ref{def:triangularProcess}), when started from an appropriate random initial state, induces an autonomous Markov {dynamic} on the $N$-th row, or `bottom edge', of $X$.
This will be a consequence of an intertwining relation between kernels through the theory of Markov functions, which is reviewed in Appendix~\ref{app:markovFunctions} for the reader's convenience.

Let $N\geq 2$ and $a,b\in\C$.
Recalling the definitions~\eqref{eq:kerK} and~\eqref{eq:kerP} of the kernels $K^N_b$ and $P^N_a$, respectively, and denoting by $\delta$ the Dirac delta kernel on $\pos_d^N$, let us set
\begin{align}
\label{eq:kerK_tilde}
\tilde{K}^N_b(z; \diff y \diff \tilde{z})
&:= \delta(z; \diff \tilde{z}) \, K^N_b(\tilde{z}; \diff y) \, , \\
\label{eq:kerLambda}
\Lambda^N_{a,b}(y,z;\diff \tilde{y} \diff \tilde{z})
&:= \,
P^{N-1}_a(y;\diff \tilde{y}) \,
Q^N_{a+b}(y,\tilde{y},z;\diff\tilde{z}) \, ,
\end{align}
for $z,\tilde{z}\in\pos_d^N$ and {$y, \tilde{y}\in\pos_d^{N-1}$}.
We then have the following intertwining relation.

\begin{proposition}
\label{prop:intertwining}
Let $N\geq 2$ and $a,b\in\C$ such that $\Re(a+b)> (d-1)/2$.
Then,
\begin{align}
\label{eq:intertwining}
\tilde{K}^N_{b} \Lambda^N_{a,b} = \Gamma_d(a+b)^{N-1}  P^N_{a} \tilde{K}^N_{b}
\end{align}
holds as an equality between {kernels} from $\pos_d^N$ to $\pos_d^{N-1}\times\pos_d^N$.
\end{proposition}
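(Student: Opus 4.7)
The plan is to verify the identity directly at the level of densities. Since $\tilde K^N_b$ contains a Dirac mass at $\tilde z=z$, composing the three kernels reduces the identity to showing that, for every $z,\tilde z\in\pos_d^N$ and $\tilde y\in\pos_d^{N-1}$,
$$\int_{\pos_d^{N-1}} K^N_b(z;y)\,P^{N-1}_a(y;\tilde y)\,Q^N_{a+b}(y,\tilde y,z;\tilde z)\,\mu^{\otimes(N-1)}(\diff y) \;=\; \Gamma_d(a+b)^{N-1}\,P^N_a(z;\tilde z)\,K^N_b(\tilde z;\tilde y),$$
where the kernels on both sides are read as densities with respect to the appropriate product $\mu$-measures. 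Inspecting~\eqref{eq:kerK}, \eqref{eq:kerP}, \eqref{eq:kerQ}, one sees that the full integrand splits as a product whose $j$-th factor depends only on $y_j$ (the conventions $\tilde y_N=0$ and $y_0^{-1}=0$ handling the boundary), together with an entirely $y$-independent remainder coming from the $j=1$ factor of $Q^N_{a+b}$. The integral therefore decouples into $N-1$ one-variable integrals, and it suffices to carry out each of them.

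The crucial observation is a cancellation inside each $y_j$-integrand. Expanding $-\tr[(\tilde y_{j+1}+z_{j+1})(\tilde z_{j+1}^{-1}-y_j^{-1})]$ from the relevant factor of $Q^N_{a+b}$ produces a term $+\tr[(\tilde y_{j+1}+z_{j+1})y_j^{-1}]$ that cancels exactly against $-\tr[z_{j+1} y_j^{-1}]$ coming from $K^N_b$ and $-\tr[\tilde y_{j+1} y_j^{-1}]$ from $P^{N-1}_a$, so that $y_j^{-1}$ disappears from the exponential. I then perform the change of variables $u_j := y_j - \tilde z_{j+1}$, which by the identity $x-y\in\pos_d\Leftrightarrow y^{-1}-x^{-1}\in\pos_d$ from \S~\ref{subsec:notation} rewrites the indicator $\1_{\pos_d}(\tilde z_{j+1}^{-1}-y_j^{-1})$ simply as $u_j\in\pos_d$. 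Using the resolvent identity $\tilde z_{j+1}^{-1}-(u_j+\tilde z_{j+1})^{-1} = \tilde z_{j+1}^{-1} u_j (u_j+\tilde z_{j+1})^{-1}$ and its determinantal consequence, all powers of $|u_j+\tilde z_{j+1}|$ in the integrand (including the one from the Jacobian $\mu(\diff y_j) = |u_j+\tilde z_{j+1}|^{-(d+1)/2}\diff u_j$) cancel, reducing each $y_j$-integral to the matrix-gamma integral $\int_{\pos_d}|u|^{a+b}\e^{-\tr[uM]}\,\mu(\diff u) = \Gamma_d(a+b)\,|M|^{-(a+b)}$ with $M = z_j^{-1}+\tilde y_j^{-1}$, which converges under the hypothesis $\Re(a+b)>(d-1)/2$.

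It then remains to assemble the $N-1$ one-variable integrals together with the $y$-independent prefactor and verify that the result matches $\Gamma_d(a+b)^{N-1}\,P^N_a(z;\tilde z)\,K^N_b(\tilde z;\tilde y)$. Using $|z_j^{-1}+\tilde y_j^{-1}|^{-(a+b)} = |z_j|^{a+b}|\tilde y_j|^{a+b}|\tilde y_j+z_j|^{-(a+b)}$, consecutive $|\tilde y_j+z_j|$-powers telescope (together with the $|\tilde y_1+z_1|^{a+b}$ coming from the $j=1$ factor of $Q^N_{a+b}$), so that the final determinantal weight collapses to $\prod_{j=1}^N |z_j|^a|\tilde z_j|^{-(a+b)}\cdot\prod_{j=1}^{N-1}|\tilde y_j|^b$, while the remaining exponential terms regroup into the patterns $-\tr[\tilde z_{j+1}z_j^{-1}]-\tr[z_j\tilde z_j^{-1}]$ and $-\tr[\tilde z_{j+1}\tilde y_j^{-1}]-\tr[\tilde y_j\tilde z_j^{-1}]$ required by $P^N_a(z;\tilde z)$ and $K^N_b(\tilde z;\tilde y)$ respectively. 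The main obstacle is the bookkeeping: tracking the many determinantal and exponential factors through the change of variables is mechanical but error-prone, and the whole conceptual content lies in the $y_j^{-1}$ cancellation which makes the multidimensional integral separate into decoupled matrix-gamma integrals in the first place.
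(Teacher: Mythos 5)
Your proposal is correct and follows essentially the same route as the paper's proof: after unwinding the Dirac mass in $\tilde K^N_b$, the integral over $y$ decouples into $N-1$ one-variable matrix integrals thanks to exactly the $y_j^{-1}$ cancellation you identify, and each is evaluated by the affine shift $y_j\mapsto y_j-\tilde z_{j+1}$ followed by the matrix-gamma integral (the paper packages this as the function $\mathfrak{J}(u,v,w)$ and works with test functions rather than densities, but the computation is identical). The remaining assembly you describe matches the paper's final rearrangement.
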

\begin{proof}
We have to prove that $\tilde{K}^N_{b} \Lambda^N_{a,b} f(z) = \Gamma_d(a+b)^{N-1}  P^N_{a} \tilde{K}^N_{b} f(z)$, for any suitable test function $f\colon \pos_d^{N-1}\times \pos_d^N \to\R$ and any $z\in\pos_d^N$.
Using~\eqref{eq:kerK_tilde}, we see that this is equivalent to the identity
\begin{equation}
\label{eq:intertwining2}
\begin{split}
&\int_{\pos_d^{N-1}}
K^N_b(z; \diff y)
\int_{\pos_d^{N-1}\times \pos_d^N} \Lambda^N_{a,b}(y,z; \diff \tilde{y} \diff \tilde{z}) f(\tilde{y}, \tilde{z}) \\
= \, &\Gamma_d(a+b)^{N-1}
\int_{\pos_d^N} P^N_a(z;\diff \tilde{z})
\int_{\pos_d^{N-1}} K^N_b(\tilde{z}; \diff \tilde{y})
f(\tilde{y},\tilde{z}) \, .
\end{split}
\end{equation}
Using the definitions of $K^N_{b}$ and $\Lambda^N_{a,b}$, we obtain, after some rearrangements and cancellations, that the left-hand side of~\eqref{eq:intertwining2} equals
\[
\begin{split}
&\int_{\pos_d^{N-1}} \mu^{\otimes(N-1)}(\diff y)
\int_{\pos_d^{N-1}} \mu^{\otimes(N-1)}(\diff \tilde{y})
\int_{\pos_d^{N}} \mu^{\otimes N}(\diff \tilde{z})
f(\tilde{y},\tilde{z}) \\
&\prod_{i=1}^{N-1} \left(\abs{\tilde{y}_i}^{-a}
\abs{y_i - \tilde{z}_{i+1}}^{a+b}
\e^{ - \tr\left[y_i (z_i^{-1} + \tilde{y}_i^{-1}) \right] }
\abs{\left(y_i - \tilde{z}_{i+1}\right) y_i^{-1}}^{-\frac{d+1}{2}}
\1_{\pos_d}\left(\tilde{z}_{i+1}^{-1} - y_i^{-1}\right) \right) \\
&\prod_{j=1}^N \left( \abs{z_j}^{-b}
\abs{ \left(\tilde{y}_j + z_j\right) \tilde{z}_j^{-1} }^{a+b}
\e^{-\tr\left[(\tilde{y}_j + z_j)\tilde{z}_j^{-1}\right]} \right) \, ,
\end{split}
\]
with the usual convention $\tilde{y}_N=0$.
By interchanging the order of integration, we see that the latter display equals
\[
\begin{split}
&\int_{\pos_d^{N-1}} \mu^{\otimes(N-1)}(\diff \tilde{y})
\int_{\pos_d^{N}} \mu^{\otimes N}(\diff \tilde{z})
f(\tilde{y},\tilde{z})
\prod_{i=1}^{N-1} \left( \abs{\tilde{y}_i}^{-a}
\mathfrak{J}(z_i,\tilde{y}_i,\tilde{z}_{i+1}) \right) \\
&\prod_{j=1}^N \left(\abs{z_j}^{-b} \abs{\left(\tilde{y}_j + z_j\right) \tilde{z}_j^{-1}}^{a+b}
\e^{-\tr\left[(\tilde{y}_j + z_j)\tilde{z}_j^{-1}\right]} \right) \, ,
\end{split}
\]
where $\mathfrak{J}\colon \pos_d^3 \to\C$ is defined by
\[
\mathfrak{J}(u,v,w)
:= \int_{\pos_d} \mu(\diff s)
\abs{s - w}^{a+b}
\e^{ - \tr\left[ s (u^{-1} + v^{-1}) \right] }
\abs{\left(s - w\right) s^{-1}}^{-\frac{d+1}{2}}
\1_{\pos_d}\left(w^{-1} - s^{-1}\right) \, .
\]
By the properties of positive definite matrices (see~\S~\ref{subsec:notation}), we have that $w^{-1} - s^{-1} \in \pos_d$ if and only if $s - w \in \pos_d$; moreover, for $w\in\pos_d$, the latter condition is stronger than $s\in \pos_d$.
We then make the change of variables $s' := s - w$, which preserves the Lebesgue measure on the `independent' entries of the symmetric matrix $s$, so that
\[
\abs{s}^{\frac{d+1}{2}} \mu(\diff s)
= \abs{s'}^{\frac{d+1}{2}} \mu(\diff s') \, .
\]
Therefore, we have
\[
\mathfrak{J}(u,v,w)
=\int_{\pos_d} \mu(\diff s')
\abs{s'}^{a+b}
\e^{ - \tr\left[ (s' + w) (u^{-1} + v^{-1}) \right] } \, .
\]
After the further, this time $\mu$-preserving, change of variables $s'' := T_{u^{-1} + v^{-1}}(s')$, we obtain
\[
\begin{split}
\mathfrak{J}(u,v,w)
&=\abs{uv \left(u+v\right)^{-1}}^{a+b}
\e^{ - \tr\left[ w (u^{-1} + v^{-1}) \right] }
\int_{\pos_d} \mu(\diff s'') 
\abs{s''}^{a+b} \e^{ - \tr [s''] } \\
&=\abs{uv \left(u+v\right)^{-1}}^{a+b}
\e^{ - \tr\left[ w (u^{-1} + v^{-1}) \right] }
\Gamma_d(a+b) \, ,
\end{split}
\]
where the gamma function is well defined since by hypothesis $\Re(a+b)>(d-1)/2$.
After a few cancellations, we then see that the left-hand side of~\eqref{eq:intertwining2} equals
\[
\begin{split}
\Gamma_d(a+b)^{N-1}
&\int\limits_{\pos_d^{N-1}} \mu^{\otimes(N-1)}(\diff \tilde{y})
\int\limits_{\pos_d^{N}} \mu^{\otimes N}(\diff \tilde{z})
f(\tilde{y},\tilde{z}) \\
&\times \prod_{i=1}^{N-1} \left( \abs{\tilde{y}_i}^b
\e^{-\tr\left[\tilde{z}_{i+1} z_i^{-1} + \tilde{z}_{i+1} \tilde{y}_i^{-1} + \tilde{y}_i \tilde{z}_i^{-1}\right] }
\right)
\prod_{j=1}^N
\left( \abs{z_j}^a \abs{\tilde{z}_j}^{-a-b}
\e^{- \tr\left[z_j \tilde{z}_j^{-1} \right] } \right) \, .
\end{split}
\]
It now follows from the definitions that this equals the right-hand side of~\eqref{eq:intertwining2}, thus concluding the proof.
\end{proof}

A simple inductive argument shows that the intertwining~\eqref{eq:intertwining} can be extended to an intertwining that involves the $\Pi$-kernel~\eqref{eq:kerPi} and the $\tilde{\Sigma}$-kernel~\eqref{eq:kerSigma_tilde}.
From now on, we fix $N\geq 1$, $a\in\C$ and $\lambda=(\lambda_1,\dots,\lambda_N)\in\C^N$ such that $\Re(a+\lambda_i)> (d-1)/2$ for all $i$.
As usual, we also use the notation $a+\lambda := (a+\lambda_1,\dots,a+\lambda_N)$.

\begin{corollary}
\label{coro:intertwining_big}
The intertwining relation
\begin{align}
\label{eq:intertwining_big}
\tilde{\Sigma}^N_{\lambda} \Pi^N_{a+\lambda} =
\left(\prod_{i=1}^N \Gamma_d(a+\lambda_i)^{i-1}\right)
P^N_{a} \tilde{\Sigma}^N_{\lambda}
\end{align}
holds as an equality between {kernels} from $\pos_d^N$ to $\trian{N}{d}$.
\end{corollary}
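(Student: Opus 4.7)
My plan is to prove \eqref{eq:intertwining_big} by induction on $N$, using Proposition~\ref{prop:intertwining} as the engine for the inductive step. The base case $N=1$ is immediate: the kernel $\tilde{\Sigma}^1_{\lambda}(z;\diff x) = \delta(z;\diff x^1) \abs{x^1}^{-\lambda}$ is essentially a re-labelled Dirac, $\Pi^1_{a+\lambda}=P^1_{a+\lambda}$, and a direct computation shows both sides of \eqref{eq:intertwining_big} equal $\abs{z}^a \abs{\tilde{x}^1}^{-a-\lambda} \e^{-\tr[z(\tilde{x}^1)^{-1}]}\mu(\diff \tilde{x}^1)$, with the prefactor $\prod_{i=1}^1 \Gamma_d(a+\lambda_i)^{i-1}=1$.

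For the inductive step, assume the identity for $N-1$, with $\lambda' := (\lambda_1,\dots,\lambda_{N-1})$. The strategy is to peel off the top row on each side and reduce to a combined application of the inductive hypothesis on the first $N-1$ rows together with Proposition~\ref{prop:intertwining} on the top row. Concretely, using \eqref{eq:kerSigma_recursive} and \eqref{eq:kerPi}, write
\[
\tilde{\Sigma}^N_{\lambda}(z;\diff x) = \delta(z;\diff x^N)\, K^N_{\lambda_N}(z;\diff x^{N-1})\, \tilde{\Sigma}^{N-1}_{\lambda'}(x^{N-1};\diff x^{1:(N-1)}),
\]
\[
\Pi^N_{a+\lambda}(x;\diff \tilde{x}) = \Pi^{N-1}_{a+\lambda'}(x^{1:(N-1)};\diff \tilde{x}^{1:(N-1)})\, Q^N_{a+\lambda_N}(x^{N-1},\tilde{x}^{N-1},x^N;\diff \tilde{x}^N).
\]
Composing and integrating out $x^N=z$, the variables $x^{1:(N-2)}$ appear only inside the inner composition $\tilde{\Sigma}^{N-1}_{\lambda'}\Pi^{N-1}_{a+\lambda'}$, while the outer integrand in $\tilde{x}^{N-1}$ is governed by $Q^N_{a+\lambda_N}$. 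Hence the inductive hypothesis applies to the inner integral and yields
\[
\int \tilde{\Sigma}^{N-1}_{\lambda'}(x^{N-1};\diff x^{1:(N-1)})\, \Pi^{N-1}_{a+\lambda'}(x^{1:(N-1)};\diff \tilde{x}^{1:(N-1)}) = C'\, P^{N-1}_a(x^{N-1};\diff \tilde{x}^{N-1})\, \Sigma^{N-1}_{\lambda'}(\tilde{x}^{N-1};\diff \tilde{x}^{1:(N-2)}),
\]
with $C' = \prod_{i=1}^{N-1}\Gamma_d(a+\lambda_i)^{i-1}$.

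Inserting this back, one then recognises the combination $P^{N-1}_a(x^{N-1};\diff \tilde{x}^{N-1})\, Q^N_{a+\lambda_N}(x^{N-1},\tilde{x}^{N-1},z;\diff \tilde{x}^N) = \Lambda^N_{a,\lambda_N}(x^{N-1},z;\diff\tilde{x}^{N-1}\diff\tilde{x}^N)$ from \eqref{eq:kerLambda}, and similarly $\delta(z;\diff x^N)K^N_{\lambda_N}(z;\diff x^{N-1}) = \tilde{K}^N_{\lambda_N}(z;\diff x^{N-1}\diff x^N)$ from \eqref{eq:kerK_tilde}. The whole expression thus becomes $C' \cdot \tilde{K}^N_{\lambda_N}\Lambda^N_{a,\lambda_N}(z;\diff \tilde{x}^{N-1}\diff \tilde{x}^N)\,\Sigma^{N-1}_{\lambda'}(\tilde{x}^{N-1};\diff \tilde{x}^{1:(N-2)})$. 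Applying Proposition~\ref{prop:intertwining} with $b=\lambda_N$ replaces $\tilde{K}^N_{\lambda_N}\Lambda^N_{a,\lambda_N}$ by $\Gamma_d(a+\lambda_N)^{N-1}\, P^N_a \tilde{K}^N_{\lambda_N}$, after which the $\tilde{K}^N_{\lambda_N}$ recombines with $\Sigma^{N-1}_{\lambda'}$ into $\tilde{\Sigma}^N_{\lambda}$ by \eqref{eq:kerSigma_recursive}. This gives exactly $C' \Gamma_d(a+\lambda_N)^{N-1} P^N_a \tilde{\Sigma}^N_{\lambda} = \prod_{i=1}^N \Gamma_d(a+\lambda_i)^{i-1}\, P^N_a \tilde{\Sigma}^N_{\lambda}$, as required.

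The only real obstacle is bookkeeping: making sure that when the induction hypothesis is applied in the presence of the extra kernel $Q^N_{a+\lambda_N}$ on top, the variable $\tilde{x}^{N-1}$ appearing in $Q^N$ is the very output of $\Pi^{N-1}_{a+\lambda'}$, so that swapping the order of $\tilde{\Sigma}^{N-1}_{\lambda'}$ and $\Pi^{N-1}_{a+\lambda'}$ is legitimate. Because $Q^N$ does not depend on $\tilde{x}^{1:(N-2)}$ or $x^{1:(N-2)}$, Fubini legitimately allows this rearrangement, and the rest is straightforward algebra of kernels.
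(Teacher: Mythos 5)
Your proof is correct and follows essentially the same route as the paper's: induction on $N$ with a direct verification of the base case, peeling off the top row via the recursive structure of $\tilde{\Sigma}^N_{\lambda}$ and $\Pi^N_{a+\lambda}$, applying the inductive hypothesis (your Fubini remark plays exactly the role of the paper's choice of test function $g(\tilde x,\tilde y)=\int Q^N_{a+\lambda_N}(y,\tilde y,z;\diff\tilde z)f(\tilde x,\tilde y,\tilde z)$), and concluding with Proposition~\ref{prop:intertwining} at $b=\lambda_N$. The bookkeeping and the constant $\prod_{i=1}^N\Gamma_d(a+\lambda_i)^{i-1}$ all check out.
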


\begin{proof}
Taking into account~\eqref{eq:kerSigma_tilde}, it is immediate to see that~\eqref{eq:intertwining_big} is equivalent to
\begin{equation}
\label{eq:intertwining_big2}
\int\limits_{\trian{N-1}{d}}
\Sigma^N_{\lambda}(z; \diff x)
\int\limits_{\trian{N-1}{d}\times \pos_d^N} \!\!\!\!\!\! \Pi^N_{a+\lambda}(x,z; \diff \tilde{x} \diff \tilde{z}) f(\tilde{x},\tilde{z})
= \kappa_{a+\lambda}
\int\limits_{\pos_d^N}
P^N_a(z;\diff \tilde{z})
\int\limits_{\trian{N-1}{d}} \Sigma^N_{\lambda}(\tilde{z}; \diff \tilde{x}) f(\tilde{x},\tilde{z})
\end{equation}
for all $z\in \pos_d^N$ and test function $f\colon \trian{N-1}{d}\times \pos_d^N \to\R$, where we set
\[
\kappa_{(\xi_1,\dots,\xi_N)}:=\prod_{i=1}^N \Gamma_d(\xi_i)^{i-1}
\qquad\qquad
\text{if}\quad \Re(\xi_i)>\frac{d-1}{2} \quad\text{for all } i\, .
\]

To prove~\eqref{eq:intertwining_big2}, we proceed by induction.
For $N=1$, \eqref{eq:intertwining_big2} amounts to the identity
\[
\psi^1_{\lambda}(z) \int_{\pos_d} P^1_{a+\lambda}(z;\diff \tilde{z}) f(\tilde{z})
= \int_{\pos_d} P^1_a(z; \diff \tilde{z}) \psi^1_{\lambda}(\tilde{z}) f(\tilde{z})
\]
for $z\in \pos_d$ and $f\colon \pos_d\to\R$.
Using~\eqref{eq:Whittaker} and~\eqref{eq:kerP}, one can easily verify that the latter is true, as both sides equal $\abs{z}^a \int_{\pos_d} \mu(\diff \tilde{z}) \abs{\tilde{z}}^{-a-\lambda} \e^{-\tr[z \tilde{z}^{-1}]} f(\tilde{z})$.

Let now $N\geq 2$ and $\tilde{\lambda} = (\lambda_1,\dots,\lambda_{N-1})$.
Assume by induction that
\begin{equation}
\label{eq:intertwining_big2_induction}
\begin{split}
&\int_{\trian{N-2}{d}}
\Sigma^{N-1}_{\tilde{\lambda}}(y; \diff x)
\int_{\trian{N-2}{d}\times\pos_d^{N-1}} \Pi^{N-1}_{a+\tilde{\lambda}}(x,y; \diff \tilde{x} \diff \tilde{y}) g(\tilde{x},\tilde{y}) \\
= \, & \kappa_{a+\tilde{\lambda}}
\int_{\pos_d^{N-1}}
P^{N-1}_a(y;\diff \tilde{y})
\int_{\trian{N-2}{d}} \Sigma^{N-1}_{\tilde{\lambda}}(\tilde{y}; \diff \tilde{x}) g(\tilde{x},\tilde{y})
\end{split}
\end{equation}
for any $y\in\pos_d^{N-1}$ and any test function $g\colon \trian{N-2}{d}\times\pos_d^{N-1}\to\R$.
Fix $z\in\pos_d^N$ and $f\colon \trian{N-1}{d}\times\pos_d^N \to\R$ (which we view as $f\colon \trian{N-2}{d}\times\pos_d^{N-1}\times\pos_d^N \to\R$).
Choosing 
\[
g(\tilde{x},\tilde{y}):= \int_{\pos_d^N} Q^N_{a+\lambda_N}(y,\tilde{y},z;\diff \tilde{z}) f(\tilde{x},\tilde{y},\tilde{z})
\]
in~\eqref{eq:intertwining_big2_induction} and integrating both sides with respect to the measure $K^N_{\lambda_N}(z; \cdot)$, we obtain
\[
\begin{split}
&\int\limits_{\pos_d^{N-1}} K^N_{\lambda_N}(z; \diff y)
\int\limits_{\trian{N-2}{d}} \Sigma^{N-1}_{\tilde{\lambda}}(y; \diff x) \!\!\!\!\!\!\!
\int\limits_{\trian{N-2}{d}\times\pos_d^{N-1}} \!\!\!\!\!\!\!\!\!\!
\Pi^{N-1}_{a+\tilde{\lambda}}(x,y; \diff \tilde{x} \diff \tilde{y})
\int\limits_{\pos_d^N}
Q^N_{a+\lambda_N}(y,\tilde{y},z;\diff \tilde{z}) f(\tilde{x},\tilde{y},\tilde{z}) \\
= \, &\kappa_{a+\tilde{\lambda}}
\int\limits_{\pos_d^{N-1}}
K^N_{\lambda_N}(z; \diff y) 
\int\limits_{\pos_d^{N-1}} P^{N-1}_a(y;\diff \tilde{y})
\int\limits_{\trian{N-2}{d}} \Sigma^{N-1}_{\tilde{\lambda}}(\tilde{y}; \diff \tilde{x})
\int\limits_{\pos_d^{N}}
Q^N_{a+\lambda_N}(y,\tilde{y},z;\diff \tilde{z})
f(\tilde{x},\tilde{y},\tilde{z}) \, .
\end{split}
\]
Using~\eqref{eq:kerSigma_recursive} and~\eqref{eq:kerPi} for the left-hand side and~\eqref{eq:kerLambda} for the right-hand side, and interchanging the integration order, we then have
\[
\begin{split}
&\int_{\trian{N-2}{d}\times\pos_d^{N-1}}
\Sigma^N_{\lambda}(z; \diff x \diff y)
\int_{\trian{N-2}{d}\times\pos_d^{N-1}\times\pos_d^N}
\Pi^{N}_{a+\lambda}(x,y,z; \diff \tilde{x} \diff \tilde{y} \diff \tilde{z}) f(\tilde{x},\tilde{y},\tilde{z}) \\
= \, &\kappa_{a+\tilde{\lambda}}
\int_{\pos_d^{N-1}}
K^N_{\lambda_N}(z; \diff y)
\int_{\pos_d^{N-1}\times \pos_d^N}
\Lambda^{N}_{a,\lambda_N}(y,z;\diff \tilde{y} \diff\tilde{z})
\left(\int_{\trian{N-2}{d}}
\Sigma^{N-1}_{\tilde{\lambda}}(\tilde{y}; \diff \tilde{x})
f(\tilde{x},\tilde{y},\tilde{z}) \right) \\
= \, &\kappa_{a+\tilde{\lambda}} \,
\Gamma_d(a+\lambda_N)^{N-1}
\int_{\pos_d^N}
P_a^N(z;\diff \tilde{z})
\int_{\pos_d^{N-1}}
K^N_{\lambda_N}(\tilde{z}; \diff \tilde{y})
\left(\int_{\trian{N-2}{d}}
\Sigma^{N-1}_{\tilde{\lambda}}(\tilde{y}; \diff \tilde{x})
f(\tilde{x},\tilde{y},\tilde{z}) \right) \\
= \, &\kappa_{a+\lambda}
\int_{\pos_d^N}
P_a^N(z;\diff \tilde{z})
\int_{\trian{N-2}{d} \times \pos_d^{N-1}}
\Sigma^{N}_{\lambda}(\tilde{z}; \diff \tilde{x} \diff \tilde{y})
f(\tilde{x},\tilde{y},\tilde{z}) \, ,
\end{split}
\]
where the latter two equalities follow from~\eqref{eq:intertwining2} and~\eqref{eq:kerSigma_recursive}, respectively.
The identification $\trian{N-2}{d} \times \pos_d^{N-1} = \trian{N-1}{d}$ concludes the proof of~\eqref{eq:intertwining_big2}.
\end{proof}

Recall now that the $\tilde{\Sigma}$-kernels generate Whittaker functions of matrix arguments, in the sense of~\eqref{eq:Whittaker_triangle}.
By integrating the intertwining relation~\eqref{eq:intertwining_big} and using~\eqref{eq:Pi_normalisation}, we immediately deduce that Whittaker functions are eigenfunctions of the \emph{integral} $P$-operators:
\begin{corollary}
\label{coro:eigenfnEqn}
We have
\begin{equation}
\label{eq:eigenfnEqn}
P^N_a \psi^N_{\lambda} = \left( \prod_{i=1}^N \Gamma_d(a + \lambda_i) \right) \psi^N_{\lambda} \, .
\end{equation}
\end{corollary}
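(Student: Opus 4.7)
The plan is to apply both sides of the intertwining relation \eqref{eq:intertwining_big} (Corollary~\ref{coro:intertwining_big}) to the constant test function $f\equiv 1$ on $\trian{N}{d}$, and then to identify the resulting integrals with Whittaker functions using~\eqref{eq:Whittaker_triangle} and the normalisation~\eqref{eq:Pi_normalisation}. This is the most natural way to turn a relation between kernels into a scalar eigenfunction identity, and all the nontrivial work (the intertwining and the total mass of $\Pi^N$) has already been established.

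More concretely, I would first note that, since $\Re(a+\lambda_i)>(d-1)/2$ for every $i$ (the standing hypothesis before the corollary), the mass formula~\eqref{eq:Pi_normalisation} gives
\begin{equation*}
\int_{\trian{N}{d}} \Pi^N_{a+\lambda}(x;\diff\tilde x)
= \prod_{i=1}^N \Gamma_d(a+\lambda_i)^{i}
\qquad \text{for every } x\in\trian{N}{d}.
\end{equation*}
Applying the left-hand side of the intertwining~\eqref{eq:intertwining_big} to the constant function $1$ and using the previous display, together with~\eqref{eq:Whittaker_triangle}, yields
\begin{equation*}
\tilde\Sigma^N_\lambda\,\Pi^N_{a+\lambda}\,1\,(z)
= \Bigl(\prod_{i=1}^N \Gamma_d(a+\lambda_i)^{i}\Bigr)
\int_{\trian{N}{d}}\tilde\Sigma^N_\lambda(z;\diff x)
= \Bigl(\prod_{i=1}^N \Gamma_d(a+\lambda_i)^{i}\Bigr)\psi^N_\lambda(z).
\end{equation*}

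Applying the right-hand side of~\eqref{eq:intertwining_big} to $1$, interchanging the order of integration in $P^N_a\tilde\Sigma^N_\lambda$, and again using~\eqref{eq:Whittaker_triangle}, we obtain
\begin{equation*}
\Bigl(\prod_{i=1}^N \Gamma_d(a+\lambda_i)^{i-1}\Bigr) P^N_a \tilde\Sigma^N_\lambda 1\,(z)
= \Bigl(\prod_{i=1}^N \Gamma_d(a+\lambda_i)^{i-1}\Bigr)\int_{\pos_d^N} P^N_a(z;\diff\tilde z)\,\psi^N_\lambda(\tilde z) .
\end{equation*}
Equating the two expressions and dividing by the common factor $\prod_{i=1}^N \Gamma_d(a+\lambda_i)^{i-1}$ (legitimate since each $\Gamma_d(a+\lambda_i)$ is a strictly positive real number under our parameter restriction) leaves precisely
\begin{equation*}
P^N_a\psi^N_\lambda(z) = \Bigl(\prod_{i=1}^N \Gamma_d(a+\lambda_i)\Bigr)\psi^N_\lambda(z),
\end{equation*}
which is~\eqref{eq:eigenfnEqn}. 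There is no real obstacle here; the only subtlety is the mild bookkeeping check that the Fubini exchange used to evaluate $\tilde\Sigma^N_\lambda\Pi^N_{a+\lambda}1$ and $P^N_a\tilde\Sigma^N_\lambda 1$ is justified, which follows from the absolute convergence of the Whittaker integral (cf.~the remark after~\eqref{eq:WhittakerClosedForm}) and the finiteness of~\eqref{eq:Pi_normalisation}.
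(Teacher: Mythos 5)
Your proof is correct and is essentially the paper's own argument: the paper likewise obtains \eqref{eq:eigenfnEqn} by integrating the intertwining relation \eqref{eq:intertwining_big} (i.e.\ applying it to the constant function) and invoking the normalisation \eqref{eq:Pi_normalisation} together with \eqref{eq:Whittaker_triangle}. The only nitpick is that under the standing hypothesis the parameters are complex, so $\Gamma_d(a+\lambda_i)$ is merely nonzero rather than a strictly positive real, but this does not affect the division step.
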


We note that this complements the interpretation of the Whittaker functions $\psi^N_{\lambda}$, given in~\cite{oConnell21}, as eigenfunctions of a \emph{differential} operator, namely the Hamiltonian of a quantisation in $\pos_d^N$ of the $N$-particle non-Abelian Toda chain.

For $x\in \trian{N}{d}$ and $z, \tilde{z}\in \pos_d^N$, we now define
\begin{align}
\label{eq:kerSigma_normal}
\overline{\Sigma}^N_{\lambda}(z; \diff x)
&:= \frac{1}{\psi^{N}_{\lambda}(z)}
\tilde{\Sigma}^N_{\lambda}(z; \diff x) \, , \\
\label{eq:kerP_doob}
\bm{P}^N_{a,\lambda}(z; \diff \tilde{z})
&:= \frac{1}{\prod_{i=1}^N \Gamma_d(a+\lambda_i)}
\frac{\psi^N_{\lambda}(\tilde{z})}{\psi^N_{\lambda}(z)}
P^N_{a}(z; \diff \tilde{z}) \, .
\end{align}
It follows from~\eqref{eq:Whittaker_triangle} and~\eqref{eq:eigenfnEqn} that the above kernels are normalised; therefore, they are Markov kernels when the parameters $a,\lambda_1,\dots,\lambda_N$ are real.
Notice that~\eqref{eq:kerP_doob} may be seen as a Doob $\mathit{h}$-transform of the $P$-kernel~\eqref{eq:kerP}.
It is now immediate to deduce a renormalised version of~\eqref{eq:intertwining_big}:
\begin{corollary}
\label{coro:intertwining_normal}
The intertwining relation
\begin{align}
\label{eq:intertwining_normal}
\overline{\Sigma}^N_{\lambda} \overline{\Pi}^N_{a+\lambda}
=\bm{P}^N_{a,\lambda} \overline{\Sigma}^N_{\lambda}
\end{align}
holds as an equality between {kernels} from $\pos_d^N$ to $\trian{N}{d}$.
\end{corollary}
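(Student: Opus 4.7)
The plan is to deduce Corollary~\ref{coro:intertwining_normal} directly from the unnormalised intertwining~\eqref{eq:intertwining_big} of Corollary~\ref{coro:intertwining_big}, simply by tracking how the three normalising factors in the definitions~\eqref{eq:kerSigma_normal} and~\eqref{eq:kerP_doob} combine. No new structural input is needed beyond the eigenfunction identity~\eqref{eq:eigenfnEqn}, which is exactly what guarantees that $\bm{P}^N_{a,\lambda}$ is a bona fide (Markov, in the real case) kernel.

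First I would compute the left-hand side. Since the factor $1/\psi^N_{\lambda}(z)$ in $\overline{\Sigma}^N_{\lambda}(z; \diff x)$ depends only on $z$, and the factor in $\overline{\Pi}^N_{a+\lambda}$ is a constant, composition yields
\[
\overline{\Sigma}^N_{\lambda}\,\overline{\Pi}^N_{a+\lambda}(z;\diff \tilde{x})
= \frac{1}{\psi^N_{\lambda}(z)\,\prod_{i=1}^N \Gamma_d(a+\lambda_i)^i}\,
\bigl(\tilde{\Sigma}^N_{\lambda}\,\Pi^N_{a+\lambda}\bigr)(z;\diff \tilde{x}).
\]
Next I would compute the right-hand side. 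Using~\eqref{eq:kerP_doob} and~\eqref{eq:kerSigma_normal}, and observing that the factor $\psi^N_{\lambda}(\tilde{z})$ in $\bm{P}^N_{a,\lambda}(z;\diff \tilde{z})$ cancels with the factor $1/\psi^N_{\lambda}(\tilde{z})$ in $\overline{\Sigma}^N_{\lambda}(\tilde{z};\diff \tilde{x})$,
\[
\bm{P}^N_{a,\lambda}\,\overline{\Sigma}^N_{\lambda}(z;\diff \tilde{x})
= \frac{1}{\psi^N_{\lambda}(z)\,\prod_{i=1}^N \Gamma_d(a+\lambda_i)}\,
\bigl(P^N_{a}\,\tilde{\Sigma}^N_{\lambda}\bigr)(z;\diff \tilde{x}).
\]

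Finally I would invoke~\eqref{eq:intertwining_big} to rewrite the integral $\bigl(P^N_{a}\,\tilde{\Sigma}^N_{\lambda}\bigr)(z;\diff \tilde{x})$ as $\bigl(\prod_{i=1}^N \Gamma_d(a+\lambda_i)^{i-1}\bigr)^{-1}\bigl(\tilde{\Sigma}^N_{\lambda}\,\Pi^N_{a+\lambda}\bigr)(z;\diff \tilde{x})$. Multiplying the two $\Gamma_d$ products gives $\prod_{i=1}^N \Gamma_d(a+\lambda_i)^{i}$, which precisely matches the denominator appearing in the expression for the left-hand side. The two sides therefore coincide, yielding~\eqref{eq:intertwining_normal}.

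There is no real obstacle: the proof is essentially a bookkeeping exercise, and the only point worth flagging is that the Doob transform in~\eqref{eq:kerP_doob} is well defined under our assumptions, namely $\psi^N_{\lambda}>0$ on $\pos_d^N$ (when the parameters are real) and $\Re(a+\lambda_i)>(d-1)/2$ for each $i$, so that the gamma-function normalisation and Corollary~\ref{coro:eigenfnEqn} apply. Likewise $\overline{\Sigma}^N_{\lambda}$ is a normalised kernel thanks to~\eqref{eq:Whittaker_triangle}, so the renormalised intertwining is stated between genuine (Markov) kernels.
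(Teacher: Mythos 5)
Your proposal is correct and coincides with the paper's own (implicit) argument: the paper presents Corollary~\ref{coro:intertwining_normal} as an immediate consequence of Corollary~\ref{coro:intertwining_big} together with the normalisations~\eqref{eq:kerSigma_normal} and~\eqref{eq:kerP_doob}, which is exactly the bookkeeping you carry out. The cancellation of $\psi^N_{\lambda}(\tilde z)$ and the matching of the gamma-factor exponents $i-1$ and $1$ against $i$ are verified correctly.
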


{From a probabilistic point of view, \eqref{eq:intertwining_normal} states that, for any fixed $z\in\pos_d^N$, {the two following update rules are equivalent}: (i) starting the process $X$ from a (random) initial configuration dictated by the intertwining kernel $\overline{\Sigma}(z; \cdot)$ and letting it evolve according to the dynamic $\overline{\Pi}$; and (ii) running the dynamic $\bm{P}$ on the bottom edge (started at $z$) and then updating the whole triangular array according to the intertwining kernel $\overline{\Sigma}$.
The main result of this section is a precise account of this interpretation.}

\begin{theorem}
\label{thm:bottomRowMarkov}
Let $X=(X(n))_{n\geq 0}$ be the Markov process on $\trian{N}{d}$ as in Definition~\ref{def:triangularProcess}.
Assume that, for an arbitrary $z\in \pos_d^N$, the initial state $X(0)$ of $X$ is distributed according to the measure $\overline{\Sigma}^N_{\beta}(z;\cdot)$.
Then, the $N$-th row $X^N = (X^N(n))_{n\geq 0}$ is a time-inhomogeneous Markov process (in its own filtration) on the state space $\pos_d^N$, with initial state $z$ and time-$n$ transition kernel $\bm{P}^N_{\alpha(n), \beta}$.
Moreover, for any bounded measurable function $f\colon \trian{N}{d} \to \R$ and $n\geq 0$, we have
\begin{equation}
\label{eq:wholeArrayGivenBottomRow}
\E\left[ f(X(n)) \;\middle|\; X^N(0), \dots, X^N(n-1), X^N(n) \right]
= \overline{\Sigma}^N_{\beta}f\left(X^N(n)\right) \qquad \text{a.s.}
\end{equation}
\end{theorem}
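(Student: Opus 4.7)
The proof will be a direct application of the theory of Markov functions (from Appendix~\ref{app:markovFunctions}) in its time-inhomogeneous form, using the intertwining established in Corollary~\ref{coro:intertwining_normal} together with the Markovianity of $X$ from Proposition~\ref{prop:triangle_Markov}. The setup is that the map of interest is the bottom-row projection $\phi \colon \trian{N}{d} \to \pos_d^N$, $\phi(x) := x^N$.

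\textbf{Step 1: identify the intertwining kernel as a Markov kernel compatible with $\phi$.} The assumption that $\alpha(n) + \beta^i > (d-1)/2$ for all $n,i$ (in particular, the parameters $\beta^i$ being real and, together with $\alpha(n)$, giving arguments of $\Gamma_d$ in its domain of positivity) ensures that $\overline{\Sigma}^N_\beta$, $\overline{\Pi}^N_{\alpha(n)+\beta}$, and $\bm{P}^N_{\alpha(n),\beta}$ are genuine (i.e.\ positive and normalised) Markov kernels, thanks to~\eqref{eq:Whittaker_triangle}, Corollary~\ref{coro:eigenfnEqn}, and~\eqref{eq:Pi_normalisation}. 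From the Dirac delta factor $\delta(z; \diff x^N)$ in the definition~\eqref{eq:kerSigma_tilde} of $\tilde{\Sigma}^N_\beta$, and hence in $\overline{\Sigma}^N_\beta$, the measure $\overline{\Sigma}^N_\beta(z;\cdot)$ is concentrated on $\phi^{-1}(z) = \{x \in \trian{N}{d} : x^N = z\}$. This is precisely the compatibility condition required by the Rogers--Pitman type criterion.

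\textbf{Step 2: invoke the intertwining at every time step.} Corollary~\ref{coro:intertwining_normal}, applied with $a = \alpha(n)$ and $\lambda = \beta$ for each $n \geq 1$, gives
\begin{equation*}
\overline{\Sigma}^N_{\beta}\,\overline{\Pi}^N_{\alpha(n)+\beta}
= \bm{P}^N_{\alpha(n),\beta}\,\overline{\Sigma}^N_{\beta} \, .
\end{equation*}
Combined with Proposition~\ref{prop:triangle_Markov}, which identifies $\overline{\Pi}^N_{\alpha(n)+\beta}$ as the time-$n$ transition kernel of $X$, this places us exactly in the hypotheses of the time-inhomogeneous Markov functions criterion reviewed in Appendix~\ref{app:markovFunctions}: for each $n$, the transition kernel of $X$ is intertwined by the same (time-independent) Markov kernel $\overline{\Sigma}^N_\beta$ with the Markov kernel $\bm{P}^N_{\alpha(n),\beta}$ on the target space $\pos_d^N$.

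\textbf{Step 3: conclude via the Markov functions theorem.} Since the initial law of $X$ is $\overline{\Sigma}^N_\beta(z;\cdot)$, the image under $\phi$ of the initial distribution is the point mass at $z$. The conclusion of the Rogers--Pitman type theorem (in its inhomogeneous version) then yields simultaneously: (i) the projected process $(X^N(n))_{n \geq 0}$ is a Markov process in its own filtration, with initial state $z$ and time-$n$ transition kernel $\bm{P}^N_{\alpha(n),\beta}$; and (ii) for every $n \geq 0$, the conditional law of $X(n)$ given the $\sigma$-algebra generated by $(X^N(0), \dots, X^N(n))$ is $\overline{\Sigma}^N_\beta(X^N(n);\cdot)$, which is precisely the identity~\eqref{eq:wholeArrayGivenBottomRow}.

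The only non-routine point is the verification that all the kernels involved have been correctly normalised so as to be Markov (for which positivity of $\psi^N_\beta$, the eigenfunction identity from Corollary~\ref{coro:eigenfnEqn}, and the explicit normalisations~\eqref{eq:P_normalization_N=1}, \eqref{eq:Q_normalisation}, and~\eqref{eq:Pi_normalisation} are needed); once these are in place, the argument reduces to quoting Appendix~\ref{app:markovFunctions}. I do not expect any serious obstacle beyond bookkeeping on which parameters satisfy the required positivity conditions.
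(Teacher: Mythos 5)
Your proposal is correct and follows essentially the same route as the paper's proof: identify the bottom-row projection $\phi(x)=x^N$, check that $\overline{\Sigma}^N_{\beta}(z;\cdot)$ is supported on $\phi^{-1}\{z\}$, and apply the Markov functions criterion of Appendix~\ref{app:markovFunctions} using Proposition~\ref{prop:triangle_Markov} and the intertwining of Corollary~\ref{coro:intertwining_normal} with $a=\alpha(n)$, $\lambda=\beta$. The normalisation bookkeeping you flag is already handled in the paper where the kernels are introduced, so nothing further is needed.
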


\begin{proof}
The statement is an application of Theorem~\ref{thm:MarkovFns}, where the state spaces are $S = \trian{N}{d}$ and $T=\pos_d^N$, and the function $\phi\colon \trian{N}{d} \to \pos_d^N$ is the projection $\phi(x) := x^N$ onto the $N$-th row of $x$, so that $X^N(n)= \phi(X(n))$.
Hypothesis~\ref{it:MarkovFns_initial} of Theorem~\ref{thm:MarkovFns}, i.e.\ the fact that $\overline{\Sigma}_{\beta}(z; \phi^{-1}\{z\})=1$ for any $z\in \pos_d^N$, holds because, by definition, the measure $\overline{\Sigma}_{\beta}(z;\cdot)$ is supported on the set $\trian{N}{d}(z)$ of height-$N$ triangular arrays with $N$-th row equal to $z$.
On the other hand, by Prop.~\ref{prop:triangle_Markov}, the time-$n$ transition kernel of $X$ is $\overline{\Pi}^N_{\alpha(n)+\beta}$.
Therefore, in this case, hypothesis~\ref{it:MarkovFns_intertw} of Theorem~\ref{thm:MarkovFns} reads as the set of intertwining relations
\[
\overline{\Sigma}^N_{\beta} \overline{\Pi}^N_{\alpha(n)+\beta}
=\bm{P}^N_{\alpha(n),\beta} \overline{\Sigma}^N_{\beta} \qquad \text{for all } n\geq 1 \, .
\]
These follow from Corollary~\ref{coro:intertwining_normal}.
\end{proof}

\begin{remark}
By letting $N$ vary, it is immediate to deduce from Theorem~\ref{thm:bottomRowMarkov} that \emph{every} row of $X$ evolves as a Markov process in its own filtration, under an appropriate (random) initial configuration on the previous rows.
Therefore, the focus on the $N$-th row should only be seen as a convenient choice.
\end{remark}

\subsection{Feynman--Kac interpretation}
\label{subsec:feynman-kac}

Here we provide a Feynman--Kac type interpretation of Whittaker functions based on the eigenfunction equation~\eqref{eq:eigenfnEqn}.
Our result should be compared to the one obtained in~\cite[Prop.~9]{oConnell21} in the continuous setting of Brownian particles.

\begin{definition}
\label{def:RW_FK}
Let $\lambda\in\R^N$ with
\[
\min\left( \lambda_1, \lambda_2-\lambda_1, \dots, \lambda_N-\lambda_{N-1}\right) > \frac{d-1}{2} \, .
\]
Let $y\in\pos_d^N$.
We define $Y=(Y(n))_{n\geq 0}=(Y_1(n),\dots,Y_N(n))_{n\geq 0}$ to be a process in $\pos_d^N$ with independent components, such that each component $Y_i=(Y_i(n))_{n\geq 0}$ is a $\GL_d$-invariant random walk on $\pos_d$ with initial state $Y_i(0)=y_i$ and inverse Wishart increments with parameter $\lambda_i$.
\end{definition}

Recalling~\eqref{eq:kerP_normal_N=1}, $Y$ is then a time-homogeneous Markov process starting at $y$ with transition kernel
\[
\Theta^N_{\lambda}(z;\diff\tilde{z}) :=
\overline{P}^1_{\lambda_1}(z_1;\diff\tilde{z}_1) \cdots 
\overline{P}^1_{\lambda_N}(z_N;\diff\tilde{z}_N)
\qquad\qquad
\text{for} \; z,\tilde{z}\in\pos_d^N \, .
\]
For $z,\tilde{z}\in\pos_d^N$, define the sub-Markov kernel
\begin{equation}
\label{eq:kerThetaTilde}
\hat{\Theta}^N_{\lambda}(z;\tilde{z})
:= \e^{-V(z;\tilde{z})} \Theta^N_{\lambda}(z;\tilde{z}) \, ,
\end{equation}
where $V$ is the `killing potential'
\begin{equation}
\label{eq:fnV}
V(z;\tilde{z}) := \sum_{i=1}^{N-1} \tr\big[\tilde{z}_{i+1} z_i^{-1}\big] \, .
\end{equation}

Denote by $\P_{y}$ and $\E_{y}$ the probability and expectation, respectively, with respect to the law of $Y$ with initial state $y$.

\begin{theorem}
\label{thm:feynmanKac}
For all $y\in\pos_d^N$, we have
\begin{equation}
\label{eq:feynmanKac}
\psi^N_\lambda(y)
= \prod_{1\leq i<j\leq N} \Gamma_d(\lambda_j-\lambda_i)
\left(\prod_{i=1}^N \abs{y_i}^{-\lambda_i}\right)
\E_y \Big[ \e^{-
\sum_{n=0}^\infty V(Y(n);Y(n+1))} \Big] \, .
\end{equation}
\end{theorem}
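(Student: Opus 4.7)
The plan is to rewrite the eigenfunction equation \eqref{eq:eigenfnEqn} as a fixed-point identity for the sub-Markov kernel $\hat{\Theta}^N_{\lambda}$, iterate it, and then pass to the limit $n\to\infty$.

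Specialising Corollary~\ref{coro:eigenfnEqn} to $a=0$ gives $P^N_0 \psi^N_{\lambda} = \bigl(\prod_{i=1}^N \Gamma_d(\lambda_i)\bigr) \psi^N_{\lambda}$. A direct comparison of \eqref{eq:kerP} and \eqref{eq:kerThetaTilde} yields the kernel factorisation
$$P^N_0(z;\tilde{z}) = \Big(\prod_{j=1}^N \Gamma_d(\lambda_j)\Big) \prod_{j=1}^N \abs{z_j}^{-\lambda_j} \abs{\tilde{z}_j}^{\lambda_j} \, \hat{\Theta}^N_{\lambda}(z;\tilde{z}),$$
so that, upon setting $g(z) := \bigl(\prod_{i=1}^N \abs{z_i}^{\lambda_i}\bigr) \psi^N_{\lambda}(z)$, the eigenfunction equation becomes the fixed-point identity $\hat{\Theta}^N_{\lambda} g = g$. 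Iterating $n$ times and recognising each iterate as a Feynman--Kac expectation under the law of $Y$, one obtains
$$g(y) = (\hat{\Theta}^N_{\lambda})^n g(y) = \E_y\Big[\e^{-S_n}\, g(Y(n))\Big] \qquad \text{for every } n\geq 0,$$
where $S_n := \sum_{k=0}^{n-1} V(Y(k);Y(k+1))$.

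It remains to pass to the limit $n\to\infty$. Since $V\geq 0$, the weights $\e^{-S_n}$ are monotone non-increasing and converge pointwise to $\e^{-S_\infty}$. The identity \eqref{eq:feynmanKac} then follows by dominated convergence, provided one can establish the two auxiliary facts: (i) $g$ is bounded on $\pos_d^N$; and (ii) $g(Y(n)) \to \prod_{1\leq i<j\leq N}\Gamma_d(\lambda_j-\lambda_i)$ almost surely as $n\to\infty$. Granting (i) and (ii), dominated convergence gives $g(y) = \bigl(\prod_{i<j}\Gamma_d(\lambda_j-\lambda_i)\bigr)\,\E_y[\e^{-S_\infty}]$, which is \eqref{eq:feynmanKac} after dividing both sides by $\prod_i \abs{y_i}^{\lambda_i}$.

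The main obstacle is (ii), an asymptotic statement about $\psi^N_{\lambda}$ along typical trajectories of $Y$. Since each $Y_i$ is a $\GL_d$-invariant multiplicative random walk with inverse Wishart increments of parameter $\lambda_i$, and the assumption on $\lambda$ forces $\lambda_{i+1}-\lambda_i > (d-1)/2$ for all $i$, the matrices $Y_i(n)$ grow, in multiplicative scale, at strictly decreasing rates as $i$ increases; in particular $\abs{Y_{i+1}(n)}/\abs{Y_i(n)} \to 0$ almost surely, placing $Y(n)$ in the asymptotic regime to which the Laplace method developed in \S\ref{sec:minimisation} applies. The resulting Whittaker asymptotic \eqref{eq:WhittakerAsymptotics} yields $\psi^N_{\lambda}(z) \sim \bigl(\prod_{i<j}\Gamma_d(\lambda_j-\lambda_i)\bigr) \prod_i \abs{z_i}^{-\lambda_i}$ in this regime, which upon multiplication by $\prod_i \abs{z_i}^{\lambda_i}$ is precisely (ii); the uniform bound in (i) should follow from the same saddle-point analysis, ensuring that the renormalised integrand in \eqref{eq:WhittakerClosedForm} is uniformly controlled.
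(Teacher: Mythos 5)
Your overall skeleton is the same as the paper's: rewrite the $a=0$ eigenfunction equation as the fixed-point identity $\hat{\Theta}^N_{\lambda}g=g$ for $g(z)=\bigl(\prod_i\abs{z_i}^{\lambda_i}\bigr)\psi^N_{\lambda}(z)$, iterate (which is exactly the martingale identity $M(0)=\E_y[M(n)]$ for $M(n)=g(Y(n))\e^{-S_n}$), and pass to the limit using boundedness of $g$ and the a.s.\ limit of $g(Y(n))$. Up to that point your argument is a faithful reformulation of the paper's Lemma~\ref{lemma:martingaleArgument}.

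The genuine gap is in your justification of facts (i) and (ii). You propose to obtain the a.s.\ limit $g(Y(n))\to\prod_{i<j}\Gamma_d(\lambda_j-\lambda_i)$ from the Laplace asymptotic \eqref{eq:WhittakerAsymptotics}, but that result lives in the \emph{wrong regime} and has the \emph{wrong form}. Theorem~\ref{thm:WhittakerAsymptotics} concerns the one-parameter family $z=r^N(k)$ with $r^N_j(k)=k^{2j-N-1}I_d$, for which $V(r^N(k);r^N(k))=\sum_j \tr[r^N_{j+1}(k)r^N_j(k)^{-1}]=(N-1)d\,k^2\to\infty$; the walk $Y(n)$, by contrast, drifts into the region $V(y;y)\to 0$ (this is what Lemma~\ref{lemma:limitV} with $\ell=0$ establishes via Lyapunov exponents). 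Moreover the leading order in \eqref{eq:WhittakerAsymptotics} is \emph{independent of $\lambda$} and of the form $C\,k^{-\gamma}\e^{-k\Phi^N(m)}$, which cannot produce the $\lambda$-dependent limit $\prod_{i<j}\Gamma_d(\lambda_j-\lambda_i)\prod_i\abs{z_i}^{-\lambda_i}$ you need. Likewise, the saddle-point analysis of \S~\ref{sec:minimisation} is a pointwise asymptotic along a specific curve and does not yield a uniform bound for $g$ on all of $\pos_d^N$. What is actually needed for (i) and (ii) is the boundedness of $y\mapsto\psi^N_{\lambda}(y)\prod_i\abs{y_i}^{\lambda_i}$ together with its convergence to $\prod_{i<j}\Gamma_d(\lambda_j-\lambda_i)$ as $V(y;y)\to 0$ — both are taken from the proof of Prop.~9 in~\cite{oConnell21} — combined with the a.s.\ statement $V(Y(n);Y(n))\to 0$, which must be proved separately (the paper does this through the Lyapunov exponents of the $\GL_d$-invariant walks and the strict monotonicity of the digamma function, using $\lambda_{i+1}-\lambda_i>(d-1)/2$). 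As written, your step (ii) would fail, so the proof is incomplete.
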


The main purpose of this subsection is to prove~\eqref{eq:feynmanKac}.
In a nutshell, using a fairly standard martingale argument, we will show that the expectation in~\eqref{eq:feynmanKac} is the unique solution to an eigenproblem; the latter is also, essentially, solved by Whittaker functions.

\begin{lemma}
\label{lemma:limitV}
Fix an integer $\ell\ge0$. For any $y\in\pos_d^N$, we have
\[
\limsup_{n\to\infty} \frac{1}{n} \log V(Y(n);Y(n+\ell))
<  0
\qquad \P_y\text{-a.s.}
\]
\end{lemma}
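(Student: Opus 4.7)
The plan is the following. The quantity $V(Y(n);Y(n+\ell))=\sum_{i=1}^{N-1}\tr[Y_{i+1}(n+\ell)Y_i(n)^{-1}]$ is a finite sum of strictly positive terms, so $\log V(Y(n);Y(n+\ell))\le\log(N-1)+\max_{1\le i\le N-1}\log\tr[Y_{i+1}(n+\ell)Y_i(n)^{-1}]$, and it therefore suffices to show that, for each fixed $i\in\{1,\dots,N-1\}$,
\[
\limsup_{n\to\infty}\frac{1}{n}\log\tr\!\big[Y_{i+1}(n+\ell)\,Y_i(n)^{-1}\big]<0\qquad\P_y\text{-a.s.}
\]

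Next, I would apply the standard trace inequality $\tr[AB^{-1}]\le d\,\lambda_{\max}(A)/\lambda_{\min}(B)$, valid for $A,B\in\pos_d$. Recalling from Definition~\ref{def:RW_FK} that $Y_i$ and $Y_{i+1}$ are \emph{independent} $\GL_d$-invariant multiplicative random walks on $\pos_d$ with iid inverse Wishart increments of parameters $\lambda_i$ and $\lambda_{i+1}$ respectively, the problem reduces to the almost sure asymptotics of $(1/n)\log\lambda_{\max}(Y_{i+1}(n+\ell))$ and $(1/n)\log\lambda_{\min}(Y_i(n))$, which are now functions of two independent walks.

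The central step is to invoke a Furstenberg--Kesten/Oseledec type result identifying the top and bottom Lyapunov exponents of such a walk. Writing $\psi$ for the digamma function, the expectation — in the spirit of computations carried out in \cite{aristaBisiOConnell23}, and derivable by applying the multiplicative ergodic theorem to the matrix-product representation of $Y_j^{1/2}$ obtained via iterated (right) polar decomposition, in combination with the orthogonal invariance of the inverse Wishart distribution — is that, for each $j$, almost surely
\[
\frac{1}{n}\log\lambda_{\max}(Y_j(n))\xrightarrow[n\to\infty]{}-\psi\!\big(\lambda_j-\tfrac{d-1}{2}\big),\qquad \frac{1}{n}\log\lambda_{\min}(Y_j(n))\xrightarrow[n\to\infty]{}-\psi(\lambda_j).
\]

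Combining these ingredients, one gets
\[
\limsup_{n\to\infty}\frac{1}{n}\log\tr\!\big[Y_{i+1}(n+\ell)Y_i(n)^{-1}\big]\le\psi(\lambda_i)-\psi\!\big(\lambda_{i+1}-\tfrac{d-1}{2}\big),
\]
and the right-hand side is strictly negative, since the standing hypothesis $\lambda_{i+1}-\lambda_i>(d-1)/2$ gives $\lambda_{i+1}-(d-1)/2>\lambda_i$ and $\psi$ is strictly increasing on $(0,\infty)$. The main obstacle I anticipate is the sharp identification of the extreme Lyapunov exponents as digamma values at shifted arguments: the crude submultiplicative bound $\lambda_{\max}(Y_j(n))\le\lambda_{\max}(Y_j(0))\prod_{k=1}^{n}\lambda_{\max}(W_j(k))$ paired with the SLLN is easy but too lossy — it only delivers decay under the stronger gap condition $\lambda_{i+1}-\lambda_i>(d+1)/2$. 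It is precisely the delicate digamma spectrum, reflecting the full orthogonal structure of the walk rather than only the marginal eigenvalues of the increments, that matches the hypothesis of Theorem~\ref{thm:feynmanKac}.
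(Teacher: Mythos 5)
Your proposal is correct and follows essentially the same route as the paper: reduce to each summand $\tr[Y_{i+1}(n+\ell)Y_i(n)^{-1}]$, bound it by a constant times $\maxeig(Y_{i+1}(n+\ell))/\mineig(Y_i(n))$, and invoke the Lyapunov exponents $-\psi(\lambda_j-\tfrac{d-1}{2})$ and $-\psi(\lambda_j)$ for the extreme eigenvalues of a $\GL_d$-invariant inverse Wishart walk (the paper cites \cite[Corollary~B.4]{aristaBisiOConnell23} for exactly this), concluding via monotonicity of the digamma function and the gap condition $\lambda_{i+1}-\lambda_i>\tfrac{d-1}{2}$. The "obstacle" you flag is precisely what the cited result supplies, so no gap remains.
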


\begin{remark}
In particular Lemma~\ref{lemma:limitV} with $\ell=1$ implies that the infinite series inside the expectation in~\eqref{eq:feynmanKac} converges $\P_y$-a.s.
\end{remark}

\begin{proof}[Proof of Lemma~\ref{lemma:limitV}]
Since
\[
V(Y(n);Y(n+\ell))
= \sum_{i=1}^{N-1} \tr\big[ Y_{i+1}(n+\ell) Y_i(n)^{-1}\big] \, ,
\]
it suffices to show that, for each $1\le i\le N-1$,
\[
\limsup_{n\to\infty} \frac{1}{n} \log\tr\big[ Y_{i+1}(n+\ell) Y_i(n)^{-1}\big]
< 0 
\qquad \P_y\text{-a.s.}
\]
Let us record the following properties, which hold for any $a,b\in\pos_d$:
\begin{itemize}
\item $\tr[ab]\leq\tr[a]\tr[b]$ (submultiplicativity of the trace);
\item $\tr[a]\leq d\, \maxeig(a)$;
\item $\maxeig(a^{-1}) = \mineig(a)^{-1}$.
\end{itemize}
Here, $\maxeig$ and $\mineig$ denote the maximum and minimum eigenvalue, respectively.
Using these facts, we have, for $1\leq i\leq N-1$:
\[
\tr\big[ Y_{i+1}(n+\ell) Y_i(n)^{-1}\big]
\leq \tr\big[ Y_{i+1}(n+\ell)\big] \tr\big[Y_i(n)^{-1}\big]
\leq d^2 \frac{\maxeig(Y_{i+1}(n+\ell))}{\mineig(Y_{i}(n))} \, .
\]
Now, using for example~\cite[Corollary~B.4]{aristaBisiOConnell23}, we have
\[
\lim_{n\to\infty} \frac{1}{n}\log\maxeig(Y_{i+1}(n)) = -\psi\left(\lambda_{i+1}-\frac{d-1}{2}\right) ,
\quad\,\,
\lim_{n\to\infty} \frac{1}{n}\log\mineig(Y_i(n)) = -\psi(\lambda_i) \, ,
\]
$\P_y$-a.s., where $\psi$ is the digamma function. These are the maximum (respectively, minimum) Lyapunov exponent of a $\GL_d$-invariant random walk with inverse Wishart increments of parameter $\lambda_{i+1}$ (respectively, $\lambda_i$).
We then obtain
\[
\limsup_{n\to\infty} \frac{1}{n} \log\tr\big[ Y_{i+1}(n+\ell) Y_i(n)^{-1}\big]
\leq \psi(\lambda_i)-\psi\left(\lambda_{i+1}-\frac{d-1}{2}\right) <0 \, ,
\]
since the digamma function is strictly increasing and, by Definition~\ref{def:RW_FK}, $\lambda_{i+1}-\lambda_i>(d-1)/2$.
\end{proof}

\begin{lemma}
\label{lemma:martingaleArgument}
Let $u\colon \pos_d^N\to\R$ be a measurable function  such that
\begin{enumerate}
\item \label{it:mart_eigenfunctionEqn}
$\hat{\Theta}^N_{\lambda} u = u$ (\emph{eigenfunction equation});
\item \label{it:mart_bounded}
$u$ is bounded (\emph{boundedness property});
\item \label{it:mart_limit}
$u(y) \to 1$ as $V(y;y)\to 0$ (\emph{boundary condition}).
\end{enumerate}
Then, for all $y\in\pos_d^N$,
\[
u(y) = \E_y \Big[ \e^{-
\sum_{n=0}^\infty V(Y(n);Y(n+1))} \Big] \, .
\]
\end{lemma}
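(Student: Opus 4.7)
The plan is to realise $u(y)$ as the expectation of a bounded martingale with respect to the law of $Y$, and then identify the pointwise limit of that martingale using the preceding lemma.

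Concretely, I would set
\[
M_n := \e^{-\sum_{k=0}^{n-1} V(Y(k);Y(k+1))} \, u(Y(n)), \qquad n\geq 0,
\]
with the convention that the empty sum is zero. Writing the eigenfunction equation in probabilistic form,
\[
u(z)
= \int_{\pos_d^N} \e^{-V(z;\tilde z)} u(\tilde z) \, \Theta^N_\lambda(z; \diff \tilde z)
= \E_z\!\left[\e^{-V(Y(0);Y(1))} u(Y(1))\right],
\]
and using the Markov property of $Y$ together with the product structure of $\Theta^N_\lambda$, one checks that $\E_y[M_{n+1}\mid \F_n^Y] = M_n$, so $(M_n)_{n\geq 0}$ is a $\P_y$-martingale. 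Because $V\geq 0$ on $\pos_d^N\times\pos_d^N$, the exponential factor is bounded by $1$, and hypothesis~\ref{it:mart_bounded} gives $|M_n|\leq \sup|u|<\infty$; in particular $\E_y[M_0]=u(y)$ and $(M_n)$ is uniformly bounded.

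I would then apply the martingale convergence theorem together with bounded convergence to conclude that $M_n\to M_\infty$ both $\P_y$-a.s.\ and in $L^1(\P_y)$, so that $u(y)=\E_y[M_\infty]$. It remains to identify $M_\infty$. For the exponential prefactor, Lemma~\ref{lemma:limitV} applied with $\ell=1$ gives summability of $V(Y(k);Y(k+1))$, $\P_y$-a.s., so the prefactor converges $\P_y$-a.s.\ to $\exp(-\sum_{n=0}^\infty V(Y(n);Y(n+1)))$. For the $u$-factor, Lemma~\ref{lemma:limitV} with $\ell=0$ yields $V(Y(n);Y(n))\to 0$ $\P_y$-a.s., so by the boundary condition~\ref{it:mart_limit}, $u(Y(n))\to 1$ $\P_y$-a.s. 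Combining these two limits identifies $M_\infty$ with $\exp(-\sum_{n=0}^\infty V(Y(n);Y(n+1)))$, and taking $\E_y$ finishes the proof.

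The routine part is the martingale verification, which is just the eigenfunction equation plus the Markov property. The main obstacle, or rather the step that actually consumes the hypotheses, is the identification of the limit: one needs both conclusions of Lemma~\ref{lemma:limitV} (with $\ell=1$ to control the accumulated potential, and with $\ell=0$ to push $Y(n)$ into the region where~\ref{it:mart_limit} applies) and then the boundary condition must be invoked to transfer the decay of $V(Y(n);Y(n))$ into the convergence $u(Y(n))\to 1$. Boundedness of $u$ is what allows the passage to the limit under the expectation; without it one would need a separate uniform integrability argument.
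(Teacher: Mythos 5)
Your proof is correct and follows essentially the same route as the paper: the same martingale $M_n$, uniform boundedness from hypothesis (ii), $L^1$-convergence, and identification of the limit via Lemma~\ref{lemma:limitV} (with $\ell=0$ for the boundary condition and $\ell=1$ for the accumulated potential). No gaps.
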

\begin{proof}
Consider the process $Y$ as in Definition~\ref{def:RW_FK}, with initial state $y\in\pos_d^N$ and transition kernel $\Theta^N_{\lambda}$.
Denote by $(\mathcal{F}(k))_{k\geq 0}$ its natural filtration.
It follows from the eigenfunction equation that
\[
\begin{split}
\E_y\left[ u(Y(k+1)) \e^{- V(Y(k); Y(k+1))} \;\middle|\; \mathcal{F}(k)\right]
&= \int_{\pos_d^N} \Theta^N_{\lambda}(Y(k);\diff \tilde{z}) \e^{- V(Y(k); \tilde{z})} u(\tilde{z}) \\
&= \hat{\Theta}^N_{\lambda} u(Y(k))
= u(Y(k)) \, .
\end{split}
\]
Therefore, the process $M=(M(k))_{k\geq 0}$ defined by
\begin{equation}
\label{eq:martingale}
M(k) :=
\begin{cases}
u(Y(0)){=u(y)} &k=0 \, , \\
u(Y(k)) \e^{-\sum_{n=0}^{k-1} V(Y(n); Y(n+1))} &k\geq 1 
\end{cases}
\end{equation}
is an $(\mathcal{F}(k))_{k\geq 0}$-martingale.
By the boundedness property, $M$ is uniformly bounded and, thus, a uniformly integrable martingale.
Therefore, $M$ converges $\P_y$-a.s.\ and in $1$-norm to a certain limit $M(\infty)$ and, for all $k\ge0$, we have $M(k)=\E_{y}\left[M(\infty) \;\middle|\; \mathcal{F}(k)\right]$.
By Lemma~\ref{lemma:limitV} (with $\ell=0$), we have $\lim_{k\to\infty}V(Y(k);Y(k))= 0$, $\P_y$-a.s.
The boundary condition then implies $\lim_{k\to\infty} u(Y(k)) =1$, $\P_y$-a.s., whence 
\[
M(\infty)=\e^{-\sum_{n=0}^{\infty} V(Y(n); Y(n+1))} \, .
\]
We conclude that, for any $y\in\pos_d^N$,
\[
u(y) = {M(0)} = \E_y[M(\infty)]
= \E_y \Big[ \e^{-
\sum_{n=0}^\infty V(Y(n);Y(n+1))} \Big] \, .
\qedhere
\]
\end{proof}

\begin{proof}[Proof of Theorem~\ref{thm:feynmanKac}]
It was proven in~\cite[proof of Prop.~9]{oConnell21} that the function
\[
v(y):= \psi^N_\lambda(y)
\prod_{i=1}^N \abs{y_i}^{\lambda_i} \, , \qquad
y\in\pos_d^N \, ,
\]
is bounded and satisfies
\[
\lim_{V(y;y)\to 0} v(y) = \prod_{1\leq i<j\leq N} \Gamma_d(\lambda_j-\lambda_i) \, .
\]
By Lemma~\ref{lemma:martingaleArgument}, it then remains to prove that $\hat{\Theta}^N_{\lambda} v = v$.
It follows from the definition~\eqref{eq:kerP} of the kernel $P^N_a$ that
\[
\hat{\Theta}^N_{\lambda} v(z) =
\left(\prod_{i=1}^N \frac{\abs{z_i}^{\lambda_i}}{\Gamma_d(\lambda_i)}\right) P^N_0 \psi^N_{\lambda}(z)
\]
for $z\in\pos_d^N$.
Using the eigenfunction equation~\eqref{eq:eigenfnEqn}, we see that the right-hand side above equals $v(z)$, as desired.
\end{proof}

\begin{corollary}
Under $\P_y$, we have the distributional equality
\begin{equation}
\label{eq:dufresne}
\sum_{n=0}^{\infty} \tr\left[Y_2(n+1) Y_1(n)^{-1}\right]
\stackrel{\text{d}}{=} \tr\left[a Z\right] \, ,
\end{equation}
where $a:= y_1^{-1}y_2y_1^{-1/2}$ and $Z$ has the inverse Wishart distribution of parameter $\lambda_2-\lambda_1$.
\end{corollary}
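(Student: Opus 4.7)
The plan is to specialize the Feynman--Kac formula of Theorem~\ref{thm:feynmanKac} to $N=2$ and identify both sides of~\eqref{eq:dufresne} via their Laplace transforms. For $N=2$ the killing potential~\eqref{eq:fnV} reduces to $V(z;\tilde z)=\tr[\tilde z_2 z_1^{-1}]$, so the series on the left-hand side of~\eqref{eq:dufresne} is exactly $S:=\sum_{n=0}^\infty V(Y(n);Y(n+1))$, and~\eqref{eq:feynmanKac} reads
\[
\E_y\bigl[\e^{-S}\bigr] \;=\; \frac{\abs{y_1}^{\lambda_1}\abs{y_2}^{\lambda_2}}{\Gamma_d(\lambda_2-\lambda_1)}\,\psi^2_\lambda(y_1,y_2).
\]
It therefore suffices to show that the right-hand side coincides with $\E[\e^{-\tr[aZ]}]$ for $Z$ inverse Wishart with parameter $\lambda_2-\lambda_1$; the distributional identity~\eqref{eq:dufresne} will then follow from the uniqueness of the Laplace transform of a $[0,\infty)$-valued random variable.

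To compute $\psi^2_\lambda$, I would use the recursion~\eqref{eq:Whittaker} together with the definition~\eqref{eq:kerK} of $K^2_{\lambda_2}$, which expresses $\psi^2_\lambda(y_1,y_2)$ as $\abs{y_1}^{-\lambda_2}\abs{y_2}^{-\lambda_2}\int_{\pos_d}\abs x^{\lambda_2-\lambda_1}\e^{-\tr[y_2 x^{-1}+x y_1^{-1}]}\mu(\diff x)$. The $\GL_d$-invariant substitution $x=y_1^{1/2}u\,y_1^{1/2}$ is $\mu$-preserving (see~\S~\ref{subsec:notation}) and, via cyclicity of the trace, turns the exponent into $-\tr[u]-\tr[b\,u^{-1}]$ with $b:=y_1^{-1/2}y_2 y_1^{-1/2}$; the Jacobian factor $\abs{y_1}^{\lambda_2-\lambda_1}$ combines with the prefactor to give
\[
\psi^2_\lambda(y_1,y_2) \;=\; \abs{y_1}^{-\lambda_1}\abs{y_2}^{-\lambda_2}\,I(b),\qquad I(b):=\int_{\pos_d}\abs u^{\lambda_2-\lambda_1}\,\e^{-\tr[u]-\tr[b\,u^{-1}]}\mu(\diff u).
\]
Substituting back into the Feynman--Kac identity yields $\E_y[\e^{-S}]=I(b)/\Gamma_d(\lambda_2-\lambda_1)$.

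Finally, I would compute $\E[\e^{-\tr[aZ]}]$ directly from the inverse Wishart density~\eqref{eq:invWishart} with parameter $\alpha=\lambda_2-\lambda_1$: after the $\mu$-preserving inversion $z\mapsto u=z^{-1}$, so that $\abs z^{-\alpha}$ becomes $\abs u^{\alpha}$, $\tr[z^{-1}]$ becomes $\tr[u]$ and $\tr[aZ]$ becomes $\tr[a\,u^{-1}]$, the resulting expression is exactly $I(a)/\Gamma_d(\lambda_2-\lambda_1)$; using cyclicity of the trace to bring $a$ into symmetric form shows $I(a) = I(b)$. Matching the two produces $\E_y[\e^{-S}]=\E[\e^{-\tr[aZ]}]$, establishing~\eqref{eq:dufresne}. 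No essential obstacle is expected: once the recursion for $\psi^2_\lambda$ and the inverse Wishart density are written down, the argument is simply a pair of $\mu$-preserving changes of variable, and the only bookkeeping subtlety is correctly absorbing the determinantal factors into the prefactors in both computations so that they match exactly.
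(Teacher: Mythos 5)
Your computation is correct up to the last step and coincides with the manipulations in the paper's proof: writing $\psi^2_\lambda(y_1,y_2)$ via the recursion~\eqref{eq:Whittaker} and the $\mu$-preserving substitution $x=T_{y_1}(u)$, and identifying $I(b)/\Gamma_d(\lambda_2-\lambda_1)$ with $\E[\e^{-\tr[aZ]}]$ through the inversion $z\mapsto z^{-1}$, are exactly the right moves. The concluding step, however, contains a genuine gap. Writing $S:=\sum_{n\ge 0}\tr[Y_2(n+1)Y_1(n)^{-1}]$, what you establish is only
\[
\E_y\bigl[\e^{-S}\bigr]=\E\bigl[\e^{-\tr[aZ]}\bigr],
\]
i.e.\ agreement of the two Laplace transforms at the \emph{single} point $s=1$. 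Uniqueness of the Laplace transform of a nonnegative random variable requires $\E[\e^{-sS}]=\E[\e^{-s\tr[aZ]}]$ for all $s$ in a set with an accumulation point (e.g.\ all $s\ge 0$); equality at one point determines nothing about the law, so the appeal to ``uniqueness of the Laplace transform'' as written is a non sequitur.

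The missing ingredient, which is precisely how the paper closes this step, is a scaling identity. Since $Y_2$ is a multiplicative random walk, replacing its initial state $y_2$ by $sy_2$ for a scalar $s>0$ multiplies every $Y_2(n)$, hence the entire series $S$, by $s$ (the increments and their parameters are unchanged); therefore $\E_{(y_1,y_2)}\bigl[\e^{-sS}\bigr]=\E_{(y_1,sy_2)}\bigl[\e^{-S}\bigr]$. Applying your $s=1$ computation at the rescaled starting point $(y_1,sy_2)$ replaces $b=y_1^{-1/2}y_2y_1^{-1/2}$ by $sb$ and yields $\E_{(y_1,y_2)}[\e^{-sS}]=\E[\e^{-s\tr[aZ]}]$ for all $s>0$, after which the uniqueness argument is legitimate. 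With this one additional observation your proof becomes complete and is then essentially the paper's proof.
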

\begin{proof}
We may assume that $N=2$, so that $Y=(Y_1,Y_2)$ starts at $y=(y_1,y_2)$.
Using Theorem~\ref{thm:feynmanKac} and the definition of Whittaker functions, we compute the Laplace transform of the left-hand side of~\eqref{eq:dufresne} as
\[
\begin{split}
\E_{(y_1,y_2)}&\left[ \e^{-s\sum_{n=0}^{\infty} \tr\left[Y_2(n+1) Y_1(n)^{-1}\right]} \right]
= \E_{(y_1,sy_2)}\left[ \e^{-\sum_{n=0}^{\infty} \tr\left[Y_2(n+1) Y_1(n)^{-1}\right]} \right] \\
&= \frac{\abs{y_1}^{\lambda_1} \abs{sy_2}^{\lambda_2}}{\Gamma_d(\lambda_2-\lambda_1)}
\int_{\pos_d} \mu(\diff x) \abs{x}^{-\lambda_1} \left(\frac{\abs{sy_1y_2}}{\abs{x}}\right)^{-\lambda_2}
\e^{-\tr[sy_2x^{-1} + xy_1^{-1}]} \\
&= \int_{\pos_d} \mu(\diff z) \e^{-s \tr[y_1^{-1/2}y_2y_1^{-1/2}z]} \frac{\abs{z}^{-(\lambda_2-\lambda_1)} \e^{-\tr[z^{-1}]}}{\Gamma_d(\lambda_2-\lambda_1)}
\end{split}
\]
for $s\in\R$, where we used the change of variables $z=y_1^{1/2}x^{-1}y_1^{1/2}$.
The last integral equals $\E\e^{-s \tr[aZ]}$, where $Z$ is inverse Wishart of parameter $\lambda_2-\lambda_1$.
We conclude that the two sides of~\eqref{eq:dufresne} have the same Laplace transform and, hence, the same law.
\end{proof}

\begin{remark}
{Up to some technical details, identity~\eqref{eq:dufresne} may be also deduced from} the Dufresne type identity for a random walk on $\pos_d$ proved in~\cite{aristaBisiOConnell23}.
Let $(R(n))_{n\geq 0}$ be a $\GL_d$-invariant random walk on $\pos_d$ whose initial state $R(0)$ is an inverse Wishart matrix with parameter $\lambda_2$ and whose increments are Beta type II matrices with parameters $\lambda_1$ and $\lambda_2$ (see~\cite{aristaBisiOConnell23} for more details).
It is then natural to expect that the eigenvalue processes of the two processes $(Y_1(n)^{-1/2}Y_2(n+1)Y_1(n)^{-1/2})_{n\geq 0}$ and $(a^{1/2}R(n)a^{1/2})_{n\geq 0}$, where $a=y_1^{-1}y_2y_1^{-1/2}$, have the same law; this is certainly true at least in the case $d=1$.
By summing the traces of these two processes over all $n\geq 0$, \cite[{Theorem~4.10}]{aristaBisiOConnell23} would then immediately provide a proof of~\eqref{eq:dufresne} that does \emph{not} rely upon the Feynman--Kac formula~\eqref{eq:feynmanKac}.
{See~\cite[Lemma~8]{oConnell21} for an analogous argument in the Brownian setting.}
\end{remark}

\section{Fixed-time laws and {matrix Whittaker measures}}
\label{sec:WhittakerMeasures}

In this section, we first prove a Whittaker integral identity that allows us to introduce {matrix Whittaker measures}.
We then obtain an asymptotic formula for a Whittaker function whose arguments go to zero or infinity in norm.
Using the latter result, we next show that, for a certain singular initial state, {matrix Whittaker measures} appear naturally as the fixed-time laws of the bottom edge of the triangular process $X$ introduced in~\S~\ref{subsec:MarkovDynamics}.
Finally, under the same singular initial state, we study the fixed-time law of the right edge {and of the left edge} of $X$.

\subsection{{Matrix Whittaker measures}}
\label{subsec:Stade_matrix}
Whittaker functions of matrix arguments satisfy a remarkable integral identity:
\begin{theorem}
\label{thm:stade_matrix}
Let $n\geq N \geq 1$.
Let $\lambda=(\lambda_1,\dots,\lambda_n)\in\C^n$ and $\rho=(\rho_1,\dots,\rho_N)\in \C^N$ such that $\Re(\lambda_\ell + \rho_i) > \frac{d-1}{2}$ for all $1\leq \ell\leq n$, $1\leq i\leq N$.
Let $s\in \pos_d$.
Then,
\begin{equation}
\label{eq:stade_matrix}
\int_{\pos_d^N}
\mu^{\otimes N}(\diff z)
\psi^{N,n}_{\lambda;s}(z)
\psi^N_{\rho}(z)
= \abs{s}^{-\sum_{j=1}^N (\lambda_j + \rho_j)}
\prod_{\ell=1}^n \prod_{i=1}^N \Gamma_d(\lambda_\ell + \rho_i) \, .
\end{equation}
\end{theorem}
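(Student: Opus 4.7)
The plan is to prove~\eqref{eq:stade_matrix} by induction on $N$, handling all $n\geq N$ simultaneously. For each $N$, I first reduce the general $n>N$ case to the diagonal $n=N$ case using the recursion~\eqref{eq:WhittakerExt} together with the eigenfunction equation of Corollary~\ref{coro:eigenfnEqn}, and then handle $n=N$ by a second recursion, this one in $N$.

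For the first reduction, for $n>N$ I write $\psi^{N,n}_{\lambda;s}=\cev P^N_{\lambda_n}\psi^{N,n-1}_{(\lambda_1,\dots,\lambda_{n-1});s}$; pairing against $\psi^N_\rho(z)\mu^{\otimes N}(\diff z)$ and swapping the order of integration (justified, as everywhere in this proof, by absolute convergence under $\Re(\lambda_\ell+\rho_i)>(d-1)/2$), the inner integral becomes $(P^N_{\lambda_n}\psi^N_\rho)(\tilde z)=\prod_{i=1}^N\Gamma_d(\lambda_n+\rho_i)\,\psi^N_\rho(\tilde z)$ by~\eqref{eq:eigenfnEqn}. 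Iterating from $n$ down to $N$ collects the factor $\prod_{\ell=N+1}^n\prod_{i=1}^N\Gamma_d(\lambda_\ell+\rho_i)$ and, using $\psi^{N,N}_{\lambda;s}(z)=\e^{-\tr[sz_N^{-1}]}\psi^N_{\lambda}(z)$, leaves me with the diagonal integral
\[
I_N(\lambda,\rho,s) := \int_{\pos_d^N}\e^{-\tr[sz_N^{-1}]}\psi^N_{\lambda}(z)\,\psi^N_{\rho}(z)\,\mu^{\otimes N}(\diff z).
\]

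For the base case $N=1$, this is $\int \abs{z}^{-\lambda_1-\rho_1}\e^{-\tr[sz^{-1}]}\mu(\diff z)$, which equals $\abs{s}^{-(\lambda_1+\rho_1)}\Gamma_d(\lambda_1+\rho_1)$ by the substitution $z\mapsto s^{1/2}zs^{1/2}$ and the definition of $\Gamma_d$. For the inductive step from $N-1$ to $N$, I expand $\psi^N_{\lambda}=K^N_{\lambda_N}\psi^{N-1}_{\lambda'}$ with $\lambda'=(\lambda_1,\dots,\lambda_{N-1})$ via~\eqref{eq:Whittaker}, swap the order of integration to integrate over $z$ first, and invoke the kernel identity~\eqref{eq:kerP-K} with the choice of parameter matching the $s$ already present in the integrand:
\[
K^N_{\lambda_N}(z;y)\,\e^{-\tr[sz_N^{-1}]}=\abs{s}^{-\lambda_N}P^N_{\lambda_N}\bigl((y_1,\dots,y_{N-1},s);\,z\bigr).
\]
The eigenfunction equation~\eqref{eq:eigenfnEqn} then turns the $z$-integral into $\abs{s}^{-\lambda_N}\prod_{i=1}^N\Gamma_d(\lambda_N+\rho_i)\,\psi^N_\rho(y,s)$, and~\eqref{eq:Whittaker_triangle&trapezoid} rewrites $\psi^N_\rho(y,s)=\abs{s}^{-\rho_N}\psi^{N-1,N}_{\rho;s}(y)$. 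The remaining integral $\int\psi^{N-1}_{\lambda'}(y)\,\psi^{N-1,N}_{\rho;s}(y)\,\mu^{\otimes(N-1)}(\diff y)$ is the Stade identity at level $N-1$ with $n'=N\geq N-1$, and is supplied by the inductive hypothesis. Assembling the Gamma factors and powers of $\abs s$ gives exactly $\abs s^{-\sum_j(\lambda_j+\rho_j)}\prod_{\ell,i=1}^N\Gamma_d(\lambda_\ell+\rho_i)$.

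The combinatorial bookkeeping of Gamma factors in the final step is essentially automatic: the eigenfunction step contributes $\{\Gamma_d(\lambda_N+\rho_i):1\leq i\leq N\}$ (the ``last row''), while the inductive hypothesis contributes $\{\Gamma_d(\lambda_i+\rho_\ell):1\leq i\leq N-1,1\leq\ell\leq N\}$ (the complementary rectangle), which together tile the full $N\times N$ grid. The main obstacle is really a conceptual one rather than a technical one: spotting that the integrand $\e^{-\tr[sz_N^{-1}]}$ with which we need to dress $\psi^N_\lambda\psi^N_\rho$ matches precisely the exponential produced by~\eqref{eq:kerP-K}, so that the asymmetric $K$-kernel (which admits only the recursion~\eqref{eq:Whittaker}, not an eigenfunction equation) converts into a $P$-kernel that does satisfy one; this is the algebraic pivot that drives the entire induction.
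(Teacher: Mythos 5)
Your proof is correct and uses essentially the same argument as the paper: both reductions — peeling off $\cev{P}^N_{\lambda_n}$ via the eigenfunction equation~\eqref{eq:eigenfnEqn} for $n>N$, and converting $K^N_{\lambda_N}$ into $P^N_{\lambda_N}$ via~\eqref{eq:kerP-K} together with~\eqref{eq:Whittaker_triangle&trapezoid} for the diagonal case — are exactly those in the paper. The only (cosmetic) difference is the organisation of the double induction: you induct on $N$ and reduce $n>N$ to $n=N$ within each level, whereas the paper inducts on $n$ and reduces $N=n$ to $N=n-1$.
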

The case $N=n$ of~\eqref{eq:stade_matrix} was noted in~\cite[Prop.~10]{oConnell21}; however, the identity did not play a key role in that article, and the details of the proof were not provided therein.
Below we provide a proof of the general case $n\ge N$ that involves the generalised Whittaker functions introduced in~\S~\ref{subsec:Whittaker_gen}.

In the scalar $d=1$ setting, \eqref{eq:stade_matrix} goes back to~\cite[Corollary~3.5]{oConnellSeppalainenZygouras14}.
For $d=1$ and $N=n$, it is equivalent to an identity that was originally found in the number theoretic literature~\cite{bump89, stade02}.

Theorem~\ref{thm:stade_matrix} can be also seen as an analogue, in the context of Whittaker functions, of the celebrated Cauchy-Littlewood identity for Schur functions.
In the literature on symmetric functions, the latter is usually proved using either the determinantal structure of Schur functions (see~\cite[I-(4.3)]{macdonald79}) or the Robinson--Schensted--Knuth correspondence, a combinatorial bijection (see~\cite[Theorem~7.12.1]{stanley99}).
None of these tools is available, so far, in our context.
To prove~\eqref{eq:stade_matrix}, we will rather proceed inductively, using the recursive definition of Whittaker functions and the eigenfunction equation~\eqref{eq:eigenfnEqn}.
For the reader's convenience, we also include in Appendix~\ref{app:cauchyLittlewood} a proof of the classical Cauchy-Littlewood identity that similarly relies on an eigenfunction equation for Schur functions (which can be seen as a version of the so-called Pieri rule).

\begin{proof}[Proof of Theorem~\ref{thm:stade_matrix}]
{We will prove~\eqref{eq:stade_matrix} {by induction} on $n$.
For a fixed integer $n\ge1$, let $\mathrm{S}(n)$ be the statement that~\eqref{eq:stade_matrix} holds for all $N$ such that $n\geq N\geq 1$ and for any choice of $\lambda$ and $\rho$ satisfying the assumptions of the theorem.}

{For $n=N=1$ we have}
\[
\begin{split}
&\int_{\pos_d} \mu(\diff z)
\psi^{1,1}_{\lambda;s}(z)
\psi^1_{\rho}(z)
= \int_{\pos_d} \mu(\diff z) \e^{-\tr[sz^{-1}]} \abs{z}^{-\lambda_1} \abs{z}^{-\rho_1} \\
= \, &\abs{s}^{-\lambda_1-\rho_1}
\int_{\pos_d} \mu(\diff \tilde{z})
\abs{\tilde{z}}^{-(\lambda_1+\rho_1)}
\e^{-\tr[\tilde{z}^{-1}]}
= \abs{s}^{-\lambda_1-\rho_1}
\Gamma_d(\lambda_1+\rho_1) \, ,
\end{split}
\]
where we have used the definitions of Whittaker functions and gamma functions and the $\mu$-preserving change of variables $\tilde{z} = T_{s^{-1}}(z)$.
{This proves {the base case} $\mathrm{S}(1)$.}

{Suppose now {by induction} that $\mathrm{S}(n-1)$ holds for some {fixed} $n\ge 2$.
To prove that $\mathrm{S}(n)$ holds, let us first prove that~\eqref{eq:stade_matrix} is valid for all $N$ such that $n>N\ge1$.}
It follows from~\eqref{eq:WhittakerExt}, Fubini's theorem, \eqref{eq:kerP_inverse} and~\eqref{eq:eigenfnEqn}, that
\[
\begin{split}
&\int_{\pos_d^N}
\mu^{\otimes N}(\diff z)
\psi^{N,n}_{\lambda;s}(z)
\psi^N_{\rho}(z)
= \int_{\pos_d^N}
\mu^{\otimes N}(\diff z)
\big(\cev{P}^N_{\lambda_n} \psi^{N,n-1}_{(\lambda_1,\dots,\lambda_{n-1});s}\big)(z)
\psi^N_{\rho}(z) \\
= \, &\int_{\pos_d^N} \mu^{\otimes N}(\diff \tilde{z})
\psi^{N,n-1}_{(\lambda_1,\dots,\lambda_{n-1});s}(\tilde{z})
\big(P^N_{\lambda_n} \psi^N_{\rho}\big)(\tilde{z}) \\
= \, & \prod_{i=1}^N \Gamma_d(\lambda_n + \rho_i)
\int_{\pos_d^N}
\mu^{\otimes N}(\diff \tilde{z})
\psi^{N,n-1}_{(\lambda_1,\dots,\lambda_{n-1});s}(\tilde{z})
\psi^N_{\rho}(\tilde{z}) \, .
\end{split}
\]
{Since $n-1\ge N$, using the assumption $\mathrm{S}(n-1)$ in the latter integral we obtain~\eqref{eq:stade_matrix}.}

{{To conclude $\mathrm{S}(n)$, we are} left to prove the case $N=n$.}
Using~\eqref{eq:WhittakerExt}, \eqref{eq:Whittaker}, Fubini's theorem, \eqref{eq:kerP-K}, \eqref{eq:eigenfnEqn}, and~\eqref{eq:Whittaker_triangle&trapezoid}, we have
\[
\begin{split}
&\int_{\pos_d^n} \mu^{\otimes n}(\diff z)
\psi^{n,n}_{\lambda;s}(z)
\psi^n_{\rho}(z) \\
= \, &\int_{\pos_d^n} \mu^{\otimes n}(\diff z)
\e^{-\tr\left[sz_n^{-1}\right]}
\big(K^n_{\lambda_n} \psi^{n-1}_{(\lambda_1,\dots,\lambda_{n-1})}\big)(z)
\psi^n_{\rho}(z) \\
= \, &\abs{s}^{-\lambda_n}
\int_{\pos_d^{n-1}} \mu^{\otimes (n-1)}(\diff y)
\psi^{n-1}_{(\lambda_1,\dots,\lambda_{n-1})}(y)
\big(P^n_{\lambda_n}\psi^n_{\rho}\big)(y_1,\dots,{y_{n-1}},s) \\
= \, &\abs{s}^{-\lambda_n}
\left(\prod_{i=1}^n \Gamma_d(\lambda_n + \rho_i)\right)
\int_{\pos_d^{n-1}} \mu^{\otimes (n-1)}(\diff y)
\psi^{n-1}_{(\lambda_1,\dots,\lambda_{n-1})}(y)
\psi^n_{\rho}(y_1,\dots,y_{n-1},s) \\
= \, &\abs{s}^{-\lambda_n-\rho_n}
\left(\prod_{i=1}^n \Gamma_d(\lambda_n + \rho_i)\right)
\int_{\pos_d^{n-1}} \mu^{\otimes (n-1)}(\diff y)
\psi^{n-1}_{(\lambda_1,\dots,\lambda_{n-1})}(y)
\psi^{n-1,n}_{\rho;s}(y) \, .
\end{split}
\]
{Recall that we have already proved~\eqref{eq:stade_matrix} for all $N$ such that $n>N\geq 1$.
Applying this, for $N=n-1$, to the latter integral, we conclude that~\eqref{eq:stade_matrix} holds also for $N=n$.}
\qedhere
\end{proof}

\begin{definition}
\label{def:WhittakerMeasure}
For $n\geq N\geq 1$.
Let $\lambda=(\lambda_1,\dots,\lambda_n)\in\R^n$ and $\rho=(\rho_1,\dots,\rho_N)\in \R^N$ such that $\lambda_\ell + \rho_i > \frac{d-1}{2}$ for all $1\leq \ell\leq n$, $1\leq i\leq N$.
We call \emph{{matrix Whittaker measure} with parameters $\lambda$ and $\rho$} the measure on $\pos_d^N$ that is absolutely continuous with respect to $\mu^{\otimes N}(\diff z)$ with density
\begin{equation}
\label{eq:WhittakerMeasure}
W^{N,n}_{\lambda,\rho}(z)
:= \left(\prod_{\ell=1}^n \prod_{i=1}^N \frac{1}{\Gamma_d(\lambda_\ell + \rho_i)}\right)
\psi^{N,n}_{\lambda;I_d}(z)
\psi^N_{\rho}(z) \qquad
\text{for } z\in\pos_d^N \, ,
\end{equation}
where $I_d$ is the $d\times d$ identity matrix.
According to the usual convention, we also denote by $W^{N,n}_{\lambda,\rho}(\diff z)$ the measure itself.
\end{definition}

By Theorem~\ref{thm:stade_matrix}, \eqref{eq:WhittakerMeasure} defines a probability distribution on $\pos_d^N$.
This extends the definition of {matrix Whittaker measures} given in~\cite[\S~7.4]{oConnell21}, which corresponds to the case $n=N$:
\begin{equation}
\label{eq:WhittakerMeasure_N=n}
W^{N,N}_{\lambda, \rho}(z)
= \left(\prod_{\ell,i=1}^N \frac{1}{\Gamma_d(\lambda_\ell + \rho_i)}\right)
\e^{-\tr[z_N^{-1}]}
\psi^N_{\lambda}(z)
\psi^N_{\rho}(z) \, .
\end{equation}

\subsection{Asymptotics of Whittaker functions}
\label{subsec:asymptoticsWhittaker}

For any real $k>0$, let
\begin{equation}
\label{eq:specialInitialCond}
r^i_j(k) := k^{2j-i-1}I_d \qquad\qquad
\text{for } 1\leq j\leq i
\end{equation}
and let $r^i(k) := (r^i_1(k), \dots, r^i_i(k))$.
Our ultimate goal is to obtain the $k\to\infty$ leading order approximation of the Whittaker function $\psi_{\lambda}^N(r^N(k))$. 

We rely on some results (Theorem~\ref{thm:minimiser&Hessian} and Prop.~\ref{prop:saddlePoint}) that we will prove, in a more general setting, in~\S~\ref{sec:minimisation}.
With this purpose in mind, we use the graphical representations of the set of height-$N$ triangular arrays $\trian{N}{d}$ and of the energy function $\Phi^N$, both involved in the definition of the Whittaker function \eqref{eq:WhittakerClosedForm} (see Fig.~\ref{fig:energyWhittaker}).
Given $N\geq 2$, we set
\[
\bm{V}:= \{(i,j)\in \Z^2\colon 1\leq j\leq i\leq N\}
\]
and consider the finite graph $\bm{G}=(\bm{V},\bm{E})$, where $\bm{E}$ consists of all (directed) edges $(i,j)\to (i+1,j)$ and $(i+1,j+1)\to (i,j)$, for $1\leq j\leq i\leq N-1$.
Then, $\trian{N}{d}$ may be identified as the set $\pos_d^{\bm{V}}$ of arrays $x=(x_v)_{v\in \bm{V}}$, where each $x_v\in\pos_d$.
Let also
\[
\bm{\Gamma}:=\{(N,j)\colon 1\leq j\leq N\} \, .
\]
We may thus identify $z\in\pos_d^N$ with $z\in\pos_d^{\bm{\Gamma}}$, so that the set $\trian{N}{d}(z)$ of all height-$N$ triangular arrays whose $N$-th row equals $z$ coincides with the set $\pos_d^{\bm{V}}(z)$, according to the notation~\eqref{eq:cylinder}.
Furthermore, the energy function~\eqref{eq:energyFn_Whittaker} can be equivalently rewritten as
\[
\Phi^N(x)
= \sum_{\substack{(i,j), (k,\ell) \in \bm{V} \colon \\ (i,j) \to (k,\ell)}} \tr\left[x^i_j (x^k_{\ell})^{-1}\right]
= \sum_{\substack{v,w \in \bm{V} \colon \\ v \to w}} \tr[x_v x_w^{-1}]
\qquad\qquad \text{for all } x\in\pos_d^{\bm{V}}.
\]

All the results of~\S~\ref{sec:minimisation} hold for the above `triangular graph' structure, since:
\begin{itemize}
\item $\bm{G}=(\bm{V},\bm{E})$ is an {\it acyclic finite directed graph};
\item $\bm{\Gamma}$ is a proper subset of $\bm{V}$ containing the only {\it source} $(N,N)$ and {\it sink} $(N,1)$ of $\bm{G}$;
\item the energy function $\Phi^N$ is of the form~\eqref{eq:energyFn_Phi}.
\end{itemize}

We first prove a property of the critical points of $\Phi^N$ that, in the scalar $d=1$ setting, was observed in~\cite{oConnell12}.
\begin{lemma}
\label{lemma:detProdRows}
Let $z\in \pos_d^N$.
Let $x$ be any critical point of $\Phi^N$ on $\trian{N}{d}(z)$.
For all $1\leq i\leq N$, let $p_i := \big\lvert x^i_1 \cdots x^i_i \big\rvert$ be the determinant of the product of the $i$-th row of $x$.
Then,
\begin{equation}
\label{eq:detProdRows}
p_1 = \sqrt[2]{p_2} = \dots =
\sqrt[N-1]{p_{N-1}} = \sqrt[N]{p_N}
= \sqrt[N]{\abs{z_1\cdots z_N}} \, .
\end{equation}
\end{lemma}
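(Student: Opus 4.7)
The plan is to convert the matrix-valued Euler--Lagrange equation at each internal vertex of the triangular graph into a scalar \emph{determinantal} identity, and then multiply these identities along each row, exploiting a telescoping cancellation to obtain a simple recursion for the $p_i$'s.

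The first step is to differentiate $\Phi^N$ with respect to $x^i_j$ at an internal vertex $v=(i,j)$ (so $1\le j\le i\le N-1$). Using the standard rule $d\tr[A\,x^{-1}]=-\tr[x^{-1}A\,x^{-1}\,dx]$, the stationarity condition reads
\[
\sum_{w\colon v\to w} x_w^{-1} \;=\; x_v^{-1}\bigg(\sum_{u\colon u\to v} x_u\bigg)\, x_v^{-1},
\]
an equation between positive definite matrices. Taking determinants yields the scalar identity
\[
\abs{x_v}^{2}\,\bigg\lvert\sum_{w\colon v\to w} x_w^{-1}\bigg\rvert
\;=\;
\bigg\lvert\sum_{u\colon u\to v} x_u\bigg\rvert.
\]

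The second step is to make this explicit at each position. The out-neighbours of $(i,j)$ in the triangular graph are $(i+1,j)$ (always) and $(i-1,j-1)$ (if $j\ge 2$), and the in-neighbours are $(i-1,j)$ (if $j\le i-1$) and $(i+1,j+1)$ (always). Using the elementary identity $\abs{A^{-1}+B^{-1}}=\abs{A+B}/(\abs{A}\abs{B})$ on $\pos_d$, the above scalar identity rewrites, for $i\ge 2$, as
\[
\abs{x^i_j}^{2}
\;=\;\abs{x^{i+1}_j}\,\abs{x^{i-1}_{j-1}}\,\frac{\bigl\lvert x^{i-1}_j + x^{i+1}_{j+1}\bigr\rvert}{\bigl\lvert x^{i+1}_j + x^{i-1}_{j-1}\bigr\rvert},
\]
with the convention that a factor corresponding to a non-existent vertex is omitted: at $j=1$ the factor $\abs{x^{i-1}_0}$ and the denominator are dropped; at $j=i$ the numerator reduces to $\abs{x^{i+1}_{i+1}}$. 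For the edge case $i=1$ one obtains directly $\abs{x^1_1}^{2}=\abs{x^2_1}\,\abs{x^2_2}$, i.e.\ $p_1^{2}=p_2$.

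The third step is to fix a row $i$ with $1\le i\le N-1$ and multiply the identities above over $j=1,\dots,i$. The accumulated factors $\abs{x^{i+1}_\bullet}$ and $\abs{x^{i-1}_\bullet}$ reconstitute exactly $p_{i+1}$ and $p_{i-1}$ (with the convention $p_0:=1$, which also covers $i=1$). The crucial observation, which is the main obstacle to verify carefully, is that the `sum' determinants telescope: after the reindexing $k=j-1$, the denominator $\abs{x^{i+1}_j+x^{i-1}_{j-1}}$ at column $j$ coincides, by commutativity of matrix addition inside the determinant, with the numerator $\abs{x^{i-1}_k+x^{i+1}_{k+1}}$ at column $k$, and every such factor cancels. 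This yields the recursion
\[
p_i^{\,2}\;=\;p_{i-1}\,p_{i+1}, \qquad 1\le i\le N-1.
\]
Equivalently $\log p_i$ is an arithmetic progression in $i$; with $p_0=1$ this forces $p_i=p_1^{\,i}$ for all $0\le i\le N$. Setting $i=N$ gives $p_1=p_N^{1/N}=\abs{z_1\cdots z_N}^{1/N}$, and hence $p_i^{1/i}=p_1=\sqrt[N]{\abs{z_1\cdots z_N}}$ for every $i$, as claimed.
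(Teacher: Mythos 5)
Your proof is correct and follows essentially the same route as the paper: write the critical-point equations at each internal vertex, take determinants, and multiply along a row so that the mixed-sum determinants telescope to give $p_i^2=p_{i-1}p_{i+1}$. The only difference is cosmetic — you make the cancellation explicit via $\abs{A^{-1}+B^{-1}}=\abs{A+B}/(\abs{A}\,\abs{B})$, where the paper leaves the telescoping implicit.
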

\begin{proof}
The critical point equations of the energy function $\Phi^N$ are
\[
(x^i_j)^{-1} (x^{i+1}_{j+1} + x^{i-1}_j) (x^i_j)^{-1}
= (x^{i-1}_{j-1})^{-1} + (x^{i+1}_j)^{-1}
\qquad\qquad
\text{for all }
1\leq j\leq i< N \, ,
\]
with the convention $x^{i-1}_i=(x^{i-1}_0)^{-1}=0$ for all $1\leq i<N$ (these correspond to~\eqref{eq:critPointsEqns} in the case of the triangular graph $\bm{G}$).
Taking determinants of both sides, we obtain
\[
\big\lvert x^i_j\big\rvert^2
= \frac{\abs{x^{i+1}_{j+1} + x^{i-1}_j}}{ \abs{(x^{i-1}_{j-1})^{-1} + (x^{i+1}_j)^{-1}}}
\qquad\qquad
\text{for all }
1\leq j\leq i< N \, .
\]
Taking the product over $j$ in the latter, many terms cancel out, yielding
\[
\prod_{j=1}^i \big\lvert x^i_j\big\rvert^2
= \prod_{j=1}^{i-1} \big\lvert x^{i-1}_j\big\rvert
\prod_{j=1}^{i+1} \big\lvert x^{i+1}_j\big\rvert \, .
\]
By definition of $p_1,\dots, p_N$, the latter can be written as
\begin{equation}
\label{eq:detProdRows_proof}
p_i^2 = p_{i-1} p_{i+1} 
\qquad\qquad
\text{for all }
1\leq j\leq i< N \, ,
\end{equation}
with the convention $p_0:=1$.
Finally, it is straightforward to see that equations~\eqref{eq:detProdRows_proof} are equivalent to~\eqref{eq:detProdRows}.
\end{proof}

Let now
\[
I_d^N := \underbrace{(I_d,\dots,I_d)}_{\text{$N$ times}}
= r^N(1) \in \pos_d^N \, .
\]
As the components of $I_d^N$ are scalar matrices, Theorem~\ref{thm:minimiser&Hessian} implies:
\begin{corollary}
\label{coro:uniqueMinimiser}
The function $\Phi^N$ on $\trian{N}{d}(I_d^N)$ has a {unique global minimiser}, at which the Hessian is positive definite.
Moreover, each component $m^i_j$ {of the minimiser $m=(m^i_j)_{1\leq j\leq i\leq N}$} is a positive scalar matrix.
\end{corollary}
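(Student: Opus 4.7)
The first assertion, namely the existence and uniqueness of the global minimiser together with the positive-definiteness of the Hessian there, follows by a direct application of Theorem~\ref{thm:minimiser&Hessian}. Indeed, the graph structure, the energy function $\Phi^N$, and the subset $\bm{\Gamma}$ have already been verified above to meet the assumptions of that theorem; moreover the boundary values $I_d$ at the vertices in $\bm{\Gamma}$ lie in $\pos_d$, so the theorem applies. Denote by $m=(m^i_j)_{1\leq j\leq i\leq N}$ the resulting unique minimiser.

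For the scalar matrix property, the plan is a symmetry argument based on the invariance of $\Phi^N$ under simultaneous orthogonal conjugation. Fix $k\in\ort_d$ and, for any $x=(x_v)_{v\in\bm{V}}\in\trian{N}{d}$, let $\tilde{x}_v := k^\top x_v k$. Since $k\in\ort_d$, we have $\tilde{x}_v\in\pos_d$, $\tilde{x}_v^{-1} = k^\top x_v^{-1} k$, and hence, using cyclicity of the trace and $kk^\top = I_d$,
\[
\tr\bigl[\tilde{x}_v \tilde{x}_w^{-1}\bigr]
= \tr\bigl[k^\top x_v k \, k^\top x_w^{-1} k\bigr]
= \tr\bigl[x_v x_w^{-1}\bigr]
\]
for every edge $v\to w$. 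Therefore $\Phi^N(\tilde{x}) = \Phi^N(x)$. Moreover, since $k^\top I_d k = I_d$, the transformation $x\mapsto \tilde{x}$ preserves the boundary condition and thus defines an involution of $\trian{N}{d}(I_d^N)$ that leaves $\Phi^N$ invariant.

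Applying this transformation to $m$, the image $(k^\top m^i_j k)_{1\leq j\leq i\leq N}$ is again a global minimiser of $\Phi^N$ on $\trian{N}{d}(I_d^N)$. By the uniqueness part already established, we deduce $k^\top m^i_j k = m^i_j$ for all $k\in\ort_d$ and all $(i,j)$; equivalently, $m^i_j$ commutes with every $k\in\ort_d$. A standard argument (e.g.\ showing that every unit vector is an eigenvector of $m^i_j$ by rotating a fixed eigenvector through orthogonal transformations, or invoking Schur's lemma for the defining representation of $\ort_d$) then forces $m^i_j$ to be a scalar multiple of $I_d$. Combined with $m^i_j\in\pos_d$, this gives the positive scalar matrix property.

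The substantive content of the corollary is really only this last symmetry step; the existence/uniqueness/Hessian analysis is deferred entirely to Theorem~\ref{thm:minimiser&Hessian}, so I do not anticipate any serious obstacle. The single point to be careful about is the direction of the orthogonal action, ensuring that both the energy $\Phi^N$ and the constraint $\trian{N}{d}(I_d^N)$ are preserved—both of which hold precisely because the boundary values are scalar multiples of $I_d$ and the trace pairing $\tr[x_v x_w^{-1}]$ is orthogonally invariant.
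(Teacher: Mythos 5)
Your proposal is correct, but it does more work than the paper for the second claim. The paper's entire proof of this corollary is a one-line appeal to Theorem~\ref{thm:minimiser&Hessian}: since the boundary data $I_d^N$ lies in $(\Scal_d^+)^{\bm\Gamma}$, that theorem already delivers \emph{all three} conclusions at once --- existence/uniqueness of the minimiser, positive definiteness of the Hessian there, \emph{and} the fact that $m\in(\Scal_d^+)^{\bm V}(I_d^N)$, i.e.\ that every component $m^i_j$ is a positive scalar matrix. You invoke the theorem only for the first two conclusions and then re-derive the scalarity by a separate symmetry argument: $\Phi^N$ and the constraint set $\trian{N}{d}(I_d^N)$ are invariant under simultaneous conjugation by any $k\in\ort_d$, so uniqueness forces $k^\top m^i_j k=m^i_j$ for all $k$, whence $m^i_j$ is scalar. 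That argument is sound (the trace computation, the preservation of the boundary condition because $k^\top I_d k=I_d$, and the final step that a symmetric matrix commuting with all of $\ort_d$ is scalar are all correct), and it has the merit of being self-contained: it extracts the scalar property from uniqueness plus invariance alone, without tracing through the theorem's reduction to the $d=1$ problem via Schur convexity and Golden--Thompson. The trade-off is pure redundancy --- the conclusion you prove by symmetry is already part of the cited theorem's statement --- so the paper's route is shorter, while yours offers an independent sanity check.
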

Throughout this subsection, $m$ will always denote the above minimiser.

\begin{corollary}
\label{coro:rowProdMinimiser}
We have $m^1_1=I_d$ and
\begin{equation}
\label{eq:detProdRows_m}
\big\lvert m^i_1 \cdots m^i_i \big\rvert = 1
\qquad\qquad \text{for all }
i=1,\dots,N \, .
\end{equation}
\end{corollary}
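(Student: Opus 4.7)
The plan is to obtain both claims directly by combining the two previously established results: Lemma~\ref{lemma:detProdRows}, which gives determinantal constraints on any critical point of $\Phi^N$, and Corollary~\ref{coro:uniqueMinimiser}, which guarantees that each component of the minimiser $m$ is a positive scalar multiple of the identity $I_d$.

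First, I would observe that $m$, being the unique global minimiser of $\Phi^N$ on $\trian{N}{d}(I_d^N)$, is in particular a critical point. I therefore apply Lemma~\ref{lemma:detProdRows} with $z = I_d^N = (I_d,\dots,I_d)$. Since $\abs{z_1 \cdots z_N} = \abs{I_d}^N = 1$, identity~\eqref{eq:detProdRows} gives
\[
p_i := \big\lvert m^i_1 \cdots m^i_i \big\rvert = 1 \qquad \text{for all } 1 \le i \le N,
\]
which is precisely~\eqref{eq:detProdRows_m}.

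Next, to pin down $m^1_1$ itself (and not merely its determinant), I would invoke Corollary~\ref{coro:uniqueMinimiser}, which states that every component $m^i_j$ of the minimiser is a positive scalar matrix. Writing $m^1_1 = c\, I_d$ for some $c>0$, the case $i=1$ of the above display reads $c^d = 1$, forcing $c=1$ and hence $m^1_1 = I_d$.

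There is no substantive obstacle here: the corollary is a direct specialisation of Lemma~\ref{lemma:detProdRows} to the boundary data $z = I_d^N$, combined with the scalar-matrix structure of the minimiser provided by Corollary~\ref{coro:uniqueMinimiser}. The only care needed is to explicitly note why $m^1_1 = I_d$ (rather than just $\abs{m^1_1}=1$) follows, which requires the scalar-matrix property and would otherwise not be automatic.
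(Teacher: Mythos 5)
Your proposal is correct and follows essentially the same route as the paper: apply Lemma~\ref{lemma:detProdRows} to the critical point $m$ with boundary data $I_d^N$ to get~\eqref{eq:detProdRows_m}, then use the scalar-matrix structure from Corollary~\ref{coro:uniqueMinimiser} together with $\abs{m^1_1}=1$ to conclude $m^1_1=I_d$. No issues.
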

\begin{proof}
Since $m\in \trian{N}{d}(I_d^N)$, we have $m^N_j = I_d$ for all $j=1,\dots,N$, hence $\big\lvert m^N_1 \cdots m^N_N \big\rvert = 1$.
On the other hand, as a minimiser, $m$ is a critical point of $\Phi^N$ on $\trian{N}{d}(I_d^N)$, hence~\eqref{eq:detProdRows_m} follows from Lemma~\ref{lemma:detProdRows}.
Furthermore, since $m^1_1$ is a multiple of $I_d$ with determinant $1$, we have $m^1_1=I_d$.
\end{proof}

\begin{theorem}
\label{thm:WhittakerAsymptotics}
For any $\lambda\in \C^N$, we have
\begin{equation}
\label{eq:WhittakerAsymptotics}
\psi^N_{\lambda}(r^N(k))
\widesim[2.5]{k\to\infty}
\frac{1}{\sqrt{\abs{\mathcal{H}(m)}}}
\left( \frac{2\pi}{k}\right)^{\frac{N(N-1)d(d+1)}{8}} \e^{-k \Phi^N(m)} \, ,
\end{equation}
where $\abs{\mathcal{H}(m)}>0$ is the Hessian determinant of $\Phi^N$ at $m$.
\end{theorem}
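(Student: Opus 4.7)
The plan is to reduce the claim to a standard Laplace approximation by exploiting the scaling structure of $r^N(k)$. Starting from the integral representation~\eqref{eq:WhittakerClosedForm}, I would first change variables by setting $x^i_j = k^{2j-i-1} y^i_j$ for all $1 \le j \le i \le N$. This substitution maps $\trian{N}{d}(r^N(k))$ bijectively onto $\trian{N}{d}(I_d^N)$. Because $\mu$ is $\GL_d$-invariant, and in particular invariant under scalar dilations, each measure $\mu(\diff x^i_j)$ is transformed into $\mu(\diff y^i_j)$. A quick bookkeeping check shows that the two families of summands in~\eqref{eq:energyFn_Whittaker} each pick up a single factor of $k$, so that $\Phi^N(x) = k \, \Phi^N(y)$; moreover every determinantal factor in~\eqref{eq:type_Whittaker} is scale-invariant, since $\sum_{j=1}^i (2j - i - 1) = 0$ cancels the power of $k$ row by row. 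Hence
\[
\psi^N_\lambda(r^N(k)) = \int_{\trian{N}{d}(I_d^N)} \Delta^N_\lambda(y) \, \e^{-k \Phi^N(y)} \prod_{i=1}^{N-1} \prod_{j=1}^i \mu(\diff y^i_j).
\]

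Next, I would apply Laplace's method to the displayed integral in the large-$k$ regime. By Corollary~\ref{coro:uniqueMinimiser}, $\Phi^N$ has a unique global minimiser $m$ on $\trian{N}{d}(I_d^N)$ and the Hessian $\mathcal{H}(m)$ is positive definite. The integration manifold has real dimension $\tfrac{N(N-1)}{2} \cdot \tfrac{d(d+1)}{2} = \tfrac{N(N-1)d(d+1)}{4}$, which accounts for the Gaussian prefactor exponent $\tfrac{N(N-1)d(d+1)}{8}$ in~\eqref{eq:WhittakerAsymptotics}. Evaluating the slowly varying amplitude at $m$ is immediate: by Corollary~\ref{coro:rowProdMinimiser}, $m^1_1 = I_d$ and $\abs{m^i_1 \cdots m^i_i} = 1$ for every $1 \le i \le N$, so both $\abs{m^1_1}^{-\lambda_1}$ and each ratio in~\eqref{eq:type_Whittaker} equal $1$. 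Therefore $\Delta^N_\lambda(m) = 1$ independently of $\lambda$, and the Laplace expansion delivers exactly the right-hand side of~\eqref{eq:WhittakerAsymptotics}.

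The main obstacle is justifying Laplace's method on the noncompact manifold $\trian{N}{d}(I_d^N)$: one must control the contributions from $y$ approaching the boundary of $\pos_d^{\bm{V}}$ (where some matrix component becomes singular or diverges in norm), and accommodate the fact that $\Delta^N_\lambda$ may be complex-valued and unbounded for arbitrary $\lambda \in \C^N$. This is precisely the purpose of Proposition~\ref{prop:saddlePoint}, which provides a uniform Laplace-type asymptotic for integrals of the form $\int h(y) \e^{-k \Phi(y)} \diff y$ in the directed-graph framework of~\S~\ref{sec:minimisation}, under the coercivity and nondegeneracy conditions on $\Phi$ supplied by Theorem~\ref{thm:minimiser&Hessian}. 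Applying Proposition~\ref{prop:saddlePoint} with the triangular graph $\bm{G}=(\bm{V},\bm{E})$, boundary set $\bm{\Gamma}$ with data $z = I_d^N$, energy $\Phi^N$, and amplitude $h = \Delta^N_\lambda$ concludes the proof of~\eqref{eq:WhittakerAsymptotics}.
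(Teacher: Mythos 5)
Your proposal is correct and follows essentially the same route as the paper: the rescaling $x^i_j = k^{2j-i-1}y^i_j$ onto $\trian{N}{d}(I_d^N)$, the observations that $\mu$ is dilation-invariant, $\Phi^N$ becomes $k\,\Phi^N$, and $\Delta^N_\lambda$ is scale-invariant, followed by an application of Proposition~\ref{prop:saddlePoint} together with Corollaries~\ref{coro:uniqueMinimiser} and~\ref{coro:rowProdMinimiser} to evaluate the amplitude as $\Delta^N_\lambda(m)=1$.
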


The case $d=1$, $N=2$ of this asymptotic result is classical; the case $d=1$ and general $N$ can be found in~\cite[eq.~20]{oConnell12}.
Finally, the case $d>1$, $N=2$ may be inferred from the Laplace approximation of Bessel functions of matrix arguments studied in~\cite{butlerWood03} (see also~\cite[Appendix B]{grabsch18} and~\cite[Section 2.6]{oConnell21}).

An important feature of~\eqref{eq:WhittakerAsymptotics} is that the leading order asymptotics does \emph{not} depend on the parameter $\lambda$.
This was already remarked in~\cite{bumpHuntley95} in the special case $d=1$ and $N=3$, for which the full asymptotic expansion was obtained.

\begin{proof}[Proof of Theorem~\ref{thm:WhittakerAsymptotics}]
By~\eqref{eq:WhittakerClosedForm}, we have
\[
\psi^N_{\lambda}(r^N(k))
= \int_{\trian{N}{d}(r^N(k))}
\Bigg(\prod_{i=1}^{N-1} \prod_{j=1}^i \mu(\diff x^i_j)\Bigg)
\Delta^N_{\lambda}(x)
\e^{-\Phi^N(x)} \, .
\]
Recalling~\eqref{eq:specialInitialCond}, let us change variables by setting
\begin{equation}
\label{eq:changeOfVars}
\tilde{x}^i_j
= r^i_j(k)^{-1} x^i_j
= k^{i-2j+1} x^i_j
\qquad\quad
\text{for } 1\leq j\leq i\leq N \, .
\end{equation}
One can then easily verify, using also the invariance property of the measure $\mu$, that
\begin{equation}
\label{eq:Whittaker_changeVars}
\psi^N_{\lambda}(r^N(k))
= \int_{\trian{N}{d}(I_d^N)}
\Bigg(\prod_{i=1}^{N-1} \prod_{j=1}^i \mu(\diff \tilde{x}^i_j)\Bigg)
\Delta^N_{\lambda}(\tilde{x}) 
\e^{-k \Phi^N(\tilde{x})} \, .
\end{equation}
Applying Prop.~\ref{prop:saddlePoint} with $g:=\Delta^N_{\lambda}$, we obtain
\[
\psi^N_{\lambda}(r^N(k))
\widesim[2.5]{k\to\infty}
\frac{\Delta^N_{\lambda}(m)}{\sqrt{\abs{\mathcal{H}(m)}}}
\left(\prod_{i=1}^{N-1} \big\lvert m^i_1 \cdots m^i_i \big\rvert \right)
\left( \frac{2\pi}{k}\right)^{\frac{N(N-1)d(d+1)}{8}} \e^{-k \Phi^N(m)} \, ,
\]
since the number of vertices of $\bm{G}$ that do \emph{not} belong to $\bm{\Gamma}$ is $N(N-1)/2$.
The claim then follows from Corollary~\ref{coro:rowProdMinimiser} (which, in particular, implies that $\Delta^N_{\lambda}(m)=1$).
\end{proof}

Recall now the definition~\eqref{eq:kerSigma_tilde} of the $\tilde{\Sigma}$-kernel.

\begin{corollary}
\label{coro:WhittakerLikeAsymptotics}
Let $f\colon \trian{N}{d} \to\R$ be a bounded and continuous function and let
\begin{equation}
\label{eq:f_k}
f_k(x) := f\big((r^i_j(k) x^i_j)_{1\leq j\leq i\leq N}\big) \qquad \text{for } k>0 \text{ and } x\in \trian{N}{d} \, .
\end{equation}
Assume that $f_k \xrightarrow{k\to\infty} f_{\infty}$ uniformly {on any compact subsets} of $\trian{N}{d}(I_d^N)$.
Then, for any $\lambda, \rho\in\R^N$,
\begin{equation}
\label{eq:WhittakerLikeAsymptotics}
\lim_{k\to\infty}
\frac{\tilde{\Sigma}_{\lambda}f(r^N(k))}{\psi^N_{\rho}(r^N(k))}
= f_{\infty}(m) \, .
\end{equation}
\end{corollary}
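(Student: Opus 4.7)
The plan is to apply a Laplace-type asymptotic analysis to both the numerator and denominator separately, then take the quotient.

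First, I would perform exactly the change of variables used in the proof of Theorem~\ref{thm:WhittakerAsymptotics}, namely $\tilde{x}^i_j = r^i_j(k)^{-1} x^i_j = k^{i-2j+1} x^i_j$, in both $\tilde{\Sigma}_\lambda f(r^N(k))$ and $\psi^N_\rho(r^N(k))$. Since $\mu$ is invariant under scalar dilations and $\Delta^N_\lambda$ is invariant under this rescaling (because $\sum_{j=1}^i (2j-i-1)=0$, so the determinantal factors in the $i$-th row of $\Delta^N_\lambda$ pick up a total power of $k^0$), the $N$-th row of $\tilde{x}$ is pinned to $I_d^N$, while $\Phi^N$ scales homogeneously to $k\Phi^N(\tilde{x})$. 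This yields
\[
\tilde{\Sigma}_\lambda f(r^N(k)) = \int_{\trian{N}{d}(I_d^N)} f_k(\tilde{x})\, \Delta^N_\lambda(\tilde{x})\, \e^{-k\Phi^N(\tilde{x})} \prod_{i=1}^{N-1}\prod_{j=1}^i \mu(\diff \tilde{x}^i_j),
\]
and an analogous expression for $\psi^N_\rho(r^N(k))$ with $(f_k,\lambda)$ replaced by $(1,\rho)$.

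Next, I would apply Theorem~\ref{thm:WhittakerAsymptotics} directly to the denominator. For the numerator, I would adapt the Laplace argument underlying Proposition~\ref{prop:saddlePoint} to accommodate the $k$-dependent factor $f_k$. Fix $\epsilon>0$. By Corollary~\ref{coro:uniqueMinimiser}, $m$ is the unique global minimiser of $\Phi^N$ on $\trian{N}{d}(I_d^N)$ and the Hessian is positive definite, so one can choose a relatively compact open neighbourhood $K$ of $m$ in $\trian{N}{d}(I_d^N)$ on which $f_\infty$ differs from $f_\infty(m)$ by less than $\epsilon$ (note that $f_\infty$, being a uniform-on-compacts limit of the continuous functions $f_k$, is itself continuous on $\trian{N}{d}(I_d^N)$) and outside which $\Phi^N \geq \Phi^N(m)+c$ for some $c>0$. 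The uniform convergence $f_k\to f_\infty$ on $\overline{K}$ then lets us replace $f_k$ inside the localised integral by $f_\infty(m)+O(\epsilon)$ for large $k$, reducing the $K$-part to $(f_\infty(m)+O(\epsilon))$ times the same Laplace integral that governs $\psi^N_\lambda(r^N(k))$ localised to $K$; this is estimated by the standard quadratic expansion of $\Phi^N$ at $m$. For the tail $K^c$, I would use $\|f\|_\infty$ together with
\[
\int_{K^c} |f_k|\,|\Delta^N_\lambda|\,\e^{-k\Phi^N(\tilde{x})} \prod \mu(\diff\tilde{x}^i_j)
\leq \|f\|_\infty\,\e^{-(k-1)(\Phi^N(m)+c)} \int_{K^c} |\Delta^N_\lambda|\,\e^{-\Phi^N} \prod \mu(\diff\tilde{x}^i_j),
\]
the last integral being finite by the absolute convergence of the Whittaker integral~\eqref{eq:WhittakerClosedForm}. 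This tail is exponentially smaller than the main Laplace contribution $k^{-N(N-1)d(d+1)/8}\e^{-k\Phi^N(m)}$.

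Combining these, and using $\Delta^N_\lambda(m)=1$ from Corollary~\ref{coro:rowProdMinimiser},
\[
\tilde{\Sigma}_\lambda f(r^N(k))
\;\widesim[2.5]{k\to\infty}\;
f_\infty(m)\,\frac{1}{\sqrt{\abs{\mathcal{H}(m)}}}\left(\frac{2\pi}{k}\right)^{\!N(N-1)d(d+1)/8}\! \e^{-k\Phi^N(m)}.
\]
Dividing by the analogous asymptotic for $\psi^N_\rho(r^N(k))$ from Theorem~\ref{thm:WhittakerAsymptotics}, the prefactors and $\e^{-k\Phi^N(m)}$ cancel exactly, leaving the limit $f_\infty(m)$; letting $\epsilon\downarrow 0$ at the end removes the error term.

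The main obstacle is the extension of the Laplace approximation to an integrand with a $k$-dependent factor: Proposition~\ref{prop:saddlePoint} is stated only for a fixed continuous $g$, so one must verify by hand that uniform convergence $f_k\to f_\infty$ on compact neighbourhoods of $m$, together with a uniform bound on $\|f_k\|_\infty$, is enough to carry the pointwise value $f_\infty(m)$ into the Laplace limit. Everything else is a transcription of the argument used for Theorem~\ref{thm:WhittakerAsymptotics}.
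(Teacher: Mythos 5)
Your proposal is correct and follows essentially the same route as the paper: rescale to pin the bottom row at $I_d^N$, use that the leading Laplace asymptotics around the unique minimiser $m$ is independent of the spectral parameter, and control the $k$-dependent factor $f_k$ via uniform convergence on a compact neighbourhood of $m$ together with boundedness and exponential suppression off that neighbourhood. The only difference is packaging: the paper first reduces to $\rho=\lambda$, interprets the ratio as $\int f_k\,\diff\mu^N_k$ for probability measures $\mu^N_k$ converging weakly to $\delta_m$ (via Prop.~\ref{prop:saddlePoint}), and invokes Lemma~\ref{lemma:convergenceOfIntegrals}, whereas you inline the $\epsilon$-localisation by hand — do note that the positive gap $\inf_{K^{\mathsf{c}}}\Phi^N\geq\Phi^N(m)+c$ needs the coercivity of Prop.~\ref{prop:phiAtInfinity} in addition to Corollary~\ref{coro:uniqueMinimiser}.
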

\begin{proof}
As the leading order asymptotics of the Whittaker function $\psi^N_{\lambda}(r^N(k))$ does not depend on $\lambda$ by Theorem~\ref{thm:WhittakerAsymptotics}, we have
\[
\lim_{k\to\infty} \frac{\psi^N_{\lambda}(r^N(k))}{\psi^N_{\rho}(r^N(k))}
=1 \, .
\]
Therefore, it suffices to prove~\eqref{eq:WhittakerLikeAsymptotics} for $\rho=\lambda$.

Note that, using~\eqref{eq:Whittaker_changeVars} and the fact that $\lambda\in\R^N$, the measure $\mu^N_k$ defined by
\[
\mu^N_k(\diff \tilde{x})
:=
\frac{1}{\psi^N_{\lambda}(r^N(k))}
\Delta^N_{\lambda}(\tilde{x})
\e^{-k \Phi^N(\tilde{x})}
\Bigg(\prod_{i=1}^{N-1} \prod_{j=1}^i \mu(\diff \tilde{x}^i_j)\Bigg)
\]
is a probability measure on $\trian{N}{d}(I_d^N)$.
By definition of $\tilde{\Sigma}_{\lambda}$, we then have
\[
\frac{\tilde{\Sigma}_{\lambda}f(r^N(k))}{\psi^N_{\lambda}(r^N(k))}
= \int\limits_{\trian{N}{d}(r^N(k))}
\Bigg(\prod_{i=1}^{N-1} \prod_{j=1}^i \mu(\diff x^i_j)\Bigg)
\frac{\Delta^N_{\lambda}(x)
\e^{-\Phi^N(x)}}{\psi^N_{\lambda}(r^N(k))}
f(x)
= \int\limits_{\trian{N}{d}(I_d^N)}
\mu^N_k(\diff \tilde{x})
f_k(\tilde{x}) \, ,
\]
where in the integral we performed the change of variables~\eqref{eq:changeOfVars}.
Since $f$ is bounded and continuous, the functions $\{f_k\}_{k>0}$ are uniformly bounded and continuous; moreover, by assumption, they converge as $k\to\infty$ to $f_{\infty}$ {uniformly on any compact subsets} of $\trian{N}{d}(I_d^N)$.
Therefore, by Lemma~\ref{lemma:convergenceOfIntegrals}, it is now enough to show that $\mu^N_k$ converges weakly as $k\to\infty$ to the Dirac measure $\delta_m$, i.e.\ that
\[
\lim_{k\to\infty} \int_{\trian{N}{d}(I_d^N)} \mu^N_k(\diff x) g(x)
= g(m)
\]
for every bounded and continuous function $g\colon \trian{N}{d}(I_d^N) \to\R$.
This claim, in turn, follows readily from Prop.~\ref{prop:saddlePoint}, since, without loss of generality, one can assume $g(m)\neq 0$.
\end{proof}

\subsection{Fixed-time law of the `bottom edge' process}
\label{subsec:bottomEdge}

Let us now go back to the Markov process $X$ on $\trian{N}{d}$ from Definition~\ref{def:triangularProcess}.
Recall that, under the hypotheses of Theorem~\ref{thm:bottomRowMarkov}, the $N$-th row $X^N$ of the process $X$ has an autonomous Markov evolution with time-$n$ transition kernel $\bm{P}^N_{\alpha(n),\beta}$ (cf.~\eqref{eq:kerP_doob}).
The transition kernel of $X^N$ from time $0$ to time $n$ is then given by the composition
\begin{equation}
\label{eq:Umeasure}
U^{N,n}_{\alpha,\beta} := \bm{P}^N_{\alpha(1), \beta}  \bm{P}^N_{\alpha(2), \beta} \cdots \bm{P}^N_{\alpha(n), \beta} \, .
\end{equation}
Thus, if the initial state of $X^N$ is $X^N(0)=z$, then the law of $X^N(n)$ is $U^{N,n}_{\alpha,\beta}(z; \cdot )$.

Let now $\lambda = (\lambda_1,\dots,\lambda_N) \in \C^N$ such that $\alpha(\ell) + \Re(\lambda_i) > \frac{d-1}{2}$ for all $1\leq \ell\leq n$ and $1\leq i\leq N$.
Iterating the eigenfunction equation~\eqref{eq:eigenfnEqn} $n$ times, we obtain the following eigenfunction equation for $U^{N,n}_{\alpha,\beta}$:
\begin{equation}
\label{eq:iteratedEigenfn}
U^{N,n}_{\alpha,\beta}
\frac{\psi^N_{\lambda}}{\psi^N_{\beta}}
= \left( \prod_{\ell=1}^n \prod_{i=1}^N \frac{\Gamma_d(\alpha(\ell) + \lambda_i)}{\Gamma_d(\alpha(\ell) + \beta^i)} \right)
\frac{\psi^N_{\lambda}}{\psi^N_{\beta}}
\, .
\end{equation}

Consider now the initial state $X^N(0)=r^N(k)$ (cf.~\eqref{eq:specialInitialCond}), which becomes singular in the limit as $k\to\infty$.
We will show that the measure $U^{N,n}_{\alpha,\beta}(r^N(k); \cdot)$ converges, as $k\to\infty$, to the {matrix Whittaker measure} with parameters $(\alpha(1),\dots,\alpha(n))$ and $\beta$.
An intuition about this fact is provided by~\eqref{eq:iteratedEigenfn}.
It follows from Theorem~\ref{thm:WhittakerAsymptotics} that the ratio of Whittaker functions on the right-hand side of~\eqref{eq:iteratedEigenfn}, evaluated at $r^N(k)$, converges to $1$ as $k\to\infty$.
It is then easy to see that, if the convergence to {matrix Whittaker measures} holds as claimed above, then~\eqref{eq:iteratedEigenfn} reduces to the Whittaker integral identity proved in~\S~\ref{subsec:Stade_matrix}.

\begin{theorem}
\label{thm:specialInitialCond}
Let $n\geq N$.
As $k\to\infty$, the distribution $U^{N,n}_{\alpha,\beta}(r^N(k); \cdot)$ converges in total variation distance (and, hence, weakly) to the {matrix Whittaker measure} with parameters $(\alpha(1), \dots, \alpha(n))$ and $\beta$ (which we denote by $W^{N,n}_{\alpha,\beta}$ for simplicity).
Namely, we have
\begin{equation}
\label{eq:totVarCov}
\lim_{k\to\infty}
\sup_{A} \abs{U^{N,n}_{\alpha,\beta}(r^N(k); A) - W^{N,n}_{\alpha,\beta}(A)} = 0 \, ,
\end{equation}
where the supremum is taken over all measurable sets $A \subseteq \pos_d^N$.
\end{theorem}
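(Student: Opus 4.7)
The plan is to reduce convergence in total variation to pointwise a.e.\ convergence of the underlying Radon--Nikodym densities, and then to establish the latter via a Laplace-type asymptotic analysis building on~\S~\ref{subsec:asymptoticsWhittaker}.

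First, I will derive an explicit form for the density of $U^{N,n}_{\alpha,\beta}(z;\cdot)$ with respect to $\mu^{\otimes N}$. Since $\bm{P}^N_{a,\beta}$ in~\eqref{eq:kerP_doob} is a Doob $h$-transform of $P^N_a$ by the Whittaker function $\psi^N_\beta$, the factors $\psi^N_\beta$ telescope in the composition $U^{N,n}_{\alpha,\beta}=\bm{P}^N_{\alpha(1),\beta}\cdots\bm{P}^N_{\alpha(n),\beta}$, producing
\begin{equation*}
U^{N,n}_{\alpha,\beta}(z;\diff\tilde{z})
=\frac{\psi^N_\beta(\tilde{z})}{\psi^N_\beta(z)\prod_{\ell,i}\Gamma_d(\alpha(\ell)+\beta^i)}\,\tilde{K}_\alpha(z;\tilde{z})\,\mu^{\otimes N}(\diff\tilde{z}),
\end{equation*}
where $\tilde{K}_\alpha := P^N_{\alpha(1)}\cdots P^N_{\alpha(n)}$ denotes the $n$-fold composition. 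Comparing with the density of $W^{N,n}_{\alpha,\beta}$ from Definition~\ref{def:WhittakerMeasure}, the convergence reduces to the pointwise asymptotic
\begin{equation*}
\frac{\tilde{K}_\alpha(r^N(k);\tilde{z})}{\psi^N_\beta(r^N(k))}\xrightarrow{k\to\infty}\psi^{N,n}_{\alpha;I_d}(\tilde{z}).
\end{equation*}
Both sides produce probability densities on $\pos_d^N$ --- the target one thanks to the Whittaker integral identity of Theorem~\ref{thm:stade_matrix}, which fixes the normalisation of $W^{N,n}_{\alpha,\beta}$. Once the pointwise (a.e.) limit above is established, Scheff\'e's lemma will automatically promote it to $L^1(\mu^{\otimes N})$-convergence of densities, which is precisely the total-variation convergence~\eqref{eq:totVarCov}.

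To establish the pointwise limit, I will analyse the integral
\begin{equation*}
\tilde{K}_\alpha(r^N(k);\tilde{z})=\int_{(\pos_d^N)^{n-1}}\prod_{j=1}^n P^N_{\alpha(j)}(z^{(j-1)};z^{(j)})\,\prod_{j=1}^{n-1}\mu^{\otimes N}(\diff z^{(j)}),\quad z^{(0)}=r^N(k),\ z^{(n)}=\tilde{z},
\end{equation*}
by a Laplace method in the spirit of Theorem~\ref{thm:WhittakerAsymptotics}. I will generalise the change of variables~\eqref{eq:changeOfVars} to the full intermediate array, chosen so that the singular boundary row $r^N(k)$ rescales to $I_d$ and the integrand acquires a leading factor $\exp(-k\,\Phi_{\mathrm{eff}})$ on a triangular subarray of the integration variables. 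The effective energy $\Phi_{\mathrm{eff}}$ will fit the directed-graph framework of~\S~\ref{sec:minimisation}, so Corollary~\ref{coro:uniqueMinimiser} applies: the unique minimiser is supported on a triangular subarray with positive-scalar matrix components, and the resulting Hessian and $\Delta$-prefactors will match exactly those arising from~\eqref{eq:WhittakerAsymptotics} for $\psi^N_\beta(r^N(k))$, hence cancel in the ratio. The integration variables that remain free after the Laplace reduction will parametrise a trapezoidal array, and the residual integrand will be precisely the one defining $\psi^{N,n}_{\alpha;I_d}(\tilde{z})$ in~\eqref{eq:WhittakerTrapezoid}.

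The main obstacle will be this Laplace analysis: the integral for $\tilde{K}_\alpha$ lives on a rectangular rather than triangular array, so its energy does not literally fit the framework of~\S~\ref{sec:minimisation}. The crucial step is to correctly partition the $N(n-1)$ intermediate matrix variables into those pinned to the triangular saddle in the $k\to\infty$ limit --- reproducing the factor $\psi^N_{(\alpha(1),\dots,\alpha(N))}(\cdot)\exp(-\tr[\cdot_N^{-1}])$ of the base case of the recursion~\eqref{eq:WhittakerExt} --- and those remaining free, which will produce the $n-N$ iterated $\cev{P}^N_{\alpha(j)}$ kernels of the same recursion. A convergence-of-integrals statement analogous to Lemma~\ref{lemma:convergenceOfIntegrals} will justify passing the pointwise saddle limit through the surviving integrations. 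An alternative route will be to use the iterated intertwining (Corollary~\ref{coro:intertwining_big}) to rewrite $\tilde{K}_\alpha(r^N(k);\tilde{z})\psi^N_\beta(\tilde{z})$ as $\tilde{\Sigma}^N_\beta F(r^N(k))$, up to constants, for a bounded continuous functional $F$ on $\trian{N}{d}$ encoding the $n$-step $\Pi$-dynamics, and then apply Corollary~\ref{coro:WhittakerLikeAsymptotics} directly --- modulo the nontrivial task of identifying the uniform-compact limit $F_\infty$ with the trapezoidal Whittaker integrand evaluated at the minimiser $m$.
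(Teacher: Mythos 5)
Your overall architecture is sound and your opening reduction is correct: the Doob factors do telescope, so $U^{N,n}_{\alpha,\beta}(r^N(k);\cdot)$ has density proportional to $\psi^N_\beta(\tilde z)\,(P^N_{\alpha(1)}\cdots P^N_{\alpha(n)})(r^N(k);\tilde z)/\psi^N_\beta(r^N(k))$, and Scheff\'e does convert pointwise a.e.\ convergence of these probability densities into total-variation convergence. The problem is that the step you yourself flag as ``the main obstacle'' is exactly where the proof lives, and neither of your two routes resolves it. The direct Laplace route on the full rectangular array of $N(n-1)$ intermediate variables does not fit \S~\ref{sec:minimisation}: the framework there pins \emph{all} boundary vertices $\Gamma$ and concentrates the entire array at a minimiser, whereas here only the variables near the singular end should concentrate while the rest must survive as a genuine integral producing $\psi^{N,n}_{\alpha;I_d}(\tilde z)$. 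You would need a mixed Laplace argument (partial concentration) that is not supplied by Theorem~\ref{thm:minimiser&Hessian} or Prop.~\ref{prop:saddlePoint}, and you give no mechanism for choosing the partition of variables or for verifying that the Hessian of the effective energy restricted to the pinned block matches the one in~\eqref{eq:WhittakerAsymptotics}.

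The paper sidesteps this entirely with two ideas you are missing. First, an induction on $n$: since $\bm{P}^N_{\alpha(n+1),\beta}$ is a Markov kernel and $W^{N,n}_{\alpha,\beta}\bm{P}^N_{\alpha(n+1),\beta}=W^{N,n+1}_{\alpha,\beta}$ (by the recursion~\eqref{eq:WhittakerExt}), the $L^1$ bound for step $n$ implies it for step $n+1$ by a single application of Fubini; hence only the base case $n=N$ requires asymptotic analysis. Second, for $n=N$ the square array of variables $z^i_j$ is relabelled along anti-diagonals into \emph{two} triangular arrays $x$ and $y$ (see~\eqref{eq:relabelling_x}--\eqref{eq:relabelling_y} and Fig.~\ref{fig:WhittakerMeasure}): the $y$-array is integrated out first, defining a bounded continuous $f(x)$ whose limit functions $f_k$ converge (monotonically, hence uniformly on compacts by Dini) to $\e^{-\tr[x^1_1 z_N^{-1}]}\psi^N_{\alpha(1:N)}(z)$, while the $x$-array realises $J^N_\alpha(r^N(k);z)=\tilde\Sigma^N_{\hat\alpha(1:N)}f(r^N(k))$, to which Corollary~\ref{coro:WhittakerLikeAsymptotics} applies verbatim. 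This is a concrete instantiation of your ``alternative route,'' but the specific relabelling, the reduction to $n=N$, and the Dini argument for the uniform-on-compacts hypothesis are the substantive content you would still need to produce.
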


\begin{proof}
We will prove that
\begin{equation}
\label{eq:scheffe}
\lim_{k\to\infty}
\int_{\pos_d^N} \mu^{\otimes N}(\diff z)
\abs{U^{N,n}_{\alpha,\beta}(r^N(k); z) - W^{N,n}_{\alpha,\beta}(z)} = 0 \, .
\end{equation}
This statement is stronger than~\eqref{eq:totVarCov}, as the supremum in~\eqref{eq:totVarCov} is clearly bounded from above by the integral in~\eqref{eq:scheffe}.

Let us fix $N$ and prove~\eqref{eq:scheffe} by induction on $n\geq N$.
Before proving the base case, let us verify the (simpler) induction step.
Assume that~\eqref{eq:scheffe} holds for a certain $n\geq N$.
Using~\eqref{eq:WhittakerMeasure}, \eqref{eq:kerP_doob}, \eqref{eq:kerP_inverse} and~\eqref{eq:WhittakerExt}, we obtain
\[
\begin{split}
\big(W^{N,n}_{\alpha,\beta}
\bm{P}^N_{\alpha(n+1),\beta}\big)(z)
&= \left(\prod_{\ell=1}^{n+1} \prod_{i=1}^N \frac{1}{\Gamma_d(\alpha(\ell) + \beta^i)}\right)
\big(\cev{P}^N_{\alpha(n+1)} \psi^{N,n}_{\alpha(1:n);I_d}\big)(z)
\psi^N_{\beta}(z) \\
&= \left(\prod_{\ell=1}^{n+1} \prod_{i=1}^N \frac{1}{\Gamma_d(\alpha(\ell) + \beta^i)}\right)
\psi^{N,n+1}_{\alpha(1:n+1);I_d}(z)
\psi^N_{\beta}(z)
= W^{N,n+1}_{\alpha,\beta}(z)
\end{split}
\]
for $z\in\pos_d^N$.
On the other hand, by~\eqref{eq:Umeasure} we have $U^{N,n+1}_{\alpha,\beta} = U^{N,n}_{\alpha,\beta} \bm{P}^N_{\alpha(n+1),\beta}$.
Applying Fubini's theorem and recalling that $\bm{P}^N_{\alpha(n+1),\beta}$ is a Markov kernel, we then obtain
\[
\begin{split}
&\quad \int_{\pos_d^N} \mu^{\otimes N}(\diff z) \abs{U^{N,n+1}_{\alpha,\beta}(r^N(k); z) - W^{N,n+1}_{\alpha,\beta}(z)} \\
&= \int_{\pos_d^N} \mu^{\otimes N}(\diff z) \abs{\int_{\pos_d^N} \mu^{\otimes N}(\diff \tilde{z})
\left(U^{N,n}_{\alpha,\beta}(r^N(k); \tilde{z})
- W^{N,n}_{\alpha,\beta}( \tilde{z}) \right)
\bm{P}^N_{\alpha(n+1),\beta}(\tilde{z};z) } \\
&\leq \int_{\pos_d^N} \mu^{\otimes N}(\diff \tilde{z})
\abs{U^{N,n}_{\alpha,\beta}(r^N(k); \tilde{z}) - W^{N,n}_{\alpha,\beta}(\tilde{z})}
\underbrace{\int_{\pos_d^N} \mu^{\otimes N}(\diff z)
\bm{P}^N_{\alpha(n+1),\beta}(\tilde{z};z)}_{=1 \quad\text{for all } \tilde{z}} \, .
\end{split}
\]
The latter expression vanishes as $k\to\infty$ by the induction hypothesis, thus proving the induction step.

It remains to prove the base case, i.e.~\eqref{eq:scheffe} for $n=N$.
Recall that the measures $U^{N,N}_{\alpha,\beta}(r^N(k); \cdot)$, for any $k>0$, and $W^{N,N}_{\alpha,\beta}$ have the same finite total mass, since they are all probability distributions, and are absolutely continuous with respect to $\mu^{\otimes N}$.
By Scheff{\'e}'s theorem (see e.g.~\cite[Theorem~16.12]{billingsley95}), it then suffices to show the convergence of the densities:
\begin{equation}
\label{eq:scheffeDensities}
\lim_{k\to\infty} U^{N,N}_{\alpha,\beta}(r^N(k); z)
= W^{N,N}_{\alpha,\beta}(z)
\qquad\qquad
\text{for $\mu^{\otimes N}$-almost every } z\in \pos_d^N \, .
\end{equation}

Fix $z\in\pos_d^N$ once for all.
Using~\eqref{eq:Umeasure}, we write
\[
\begin{split}
U^{N,N}_{\alpha,\beta}(r^N(k); z)
= \, &\int_{\pos_d^N}
\bm{P}^N_{\alpha(1), \beta}(r^N(k); \diff z^1)
\int_{\pos_d^N}\bm{P}^N_{\alpha(2), \beta}(z^1; \diff z^2) \cdots \\
&\cdots
\int_{\pos_d^N} \bm{P}^{N}_{\alpha(N-1), \beta}(z^{N-2}; \diff z^{N-1})
\bm{P}^N_{\alpha(N), \beta}(z^{N-1}; z)
\end{split}
\]
Define now
\begin{equation}
\label{eq:WhittakerMeasure_prelimit2}
\begin{split}
J^N_{\alpha}(z^0;z^N)
:= \int\limits_{\pos_d^{N(N-1)}}
\left(\prod_{\ell=1}^{N-1} \mu^{\otimes N}(\diff z^\ell)\right)
\prod_{i,j=1}^N
\frac{\big\lvert z^{i-1}_j\big\rvert^{\alpha(i)}}{\big\lvert z^i_j \big\rvert^{\alpha(i)}}
\e^{ - \tr\left[ z^i_{j+1} (z^{i-1}_j)^{-1} + z^{i-1}_j (z^i_j)^{-1} \right] }
\end{split}
\end{equation}
for $z^0, z^N\in \pos_d^N$, with the usual conventions $z^i_{N+1}:=0$ for all $i=0,\dots,N$.
Using the definition~\eqref{eq:kerP_doob} of the $\bm{P}$-kernels, we then have
\begin{equation}
\label{eq:WhittakerMeasure_prelimit1}
U^{N,N}_{\alpha,\beta}(r^N(k); z)
= \left(\prod_{\ell,i=1}^N \frac{1}{\Gamma_d(\alpha(\ell)+\beta^i)}\right)
\frac{\psi^N_{\beta}(z)}{\psi^N_{\beta}(r^N(k))}
J^N_{\alpha}(r^N(k);z) \, .
\end{equation}
Comparing~\eqref{eq:WhittakerMeasure_prelimit1} with~\eqref{eq:WhittakerMeasure_N=n}, we are reduced to show that
\begin{equation}
\label{eq:WhittakerMeasure_limit}
\lim_{k\to\infty} \frac{J^N_{\alpha}(r^N(k);z)}{\psi^N_{\beta}(r^N(k))}
= \e^{-\tr\left[z_N^{-1} \right]} \psi^N_{\alpha(1:N)}(z) \quad\qquad
\text{for $\mu^{\otimes N}$-almost every $z\in \pos_d^N$.}
\end{equation}

Let us relabel the variables in the integral~\eqref{eq:WhittakerMeasure_prelimit2} by setting
\begin{align}
\label{eq:relabelling_x}
z^i_j &= x^{N-i}_{j-i} \qquad &&\text{for } 0\leq i\leq N-1 \text{ and } i+1\leq j\leq N \, , \\
\label{eq:relabelling_y}
z^i_j &= y^i_j \qquad &&\text{for } 1\leq i\leq N \text{ and } 1\leq j\leq i \, .
\end{align}
This relabelling yields two triangular arrays $x,y\in \trian{N}{d}$.
See Fig.~\ref{fig:WhittakerMeasure} for a graphical representation of the variables $z^i_j$ and the corresponding arrays $x$ and $y$.
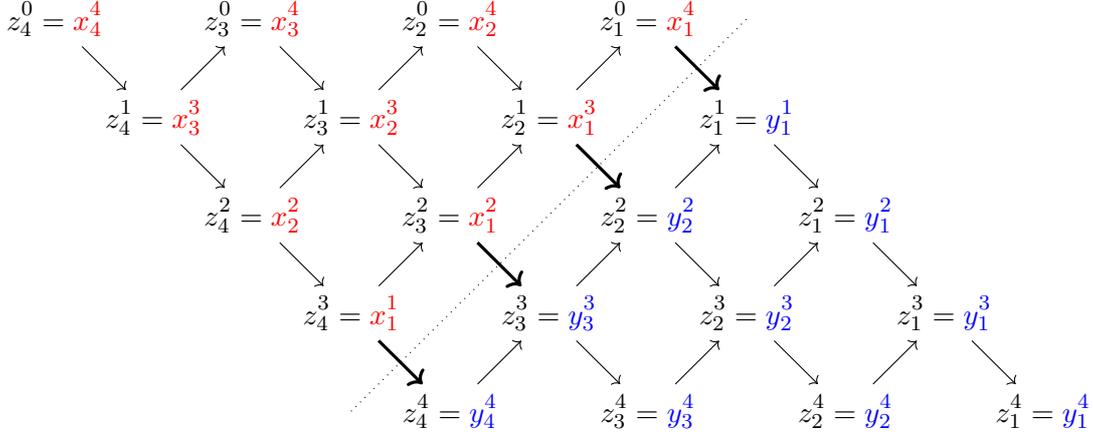
\begin{figure}
\centering
\begin{tikzpicture}[scale=1.3]

\node (z04) at (0,0) {$z^0_4=\red{x^4_4}$};
\node (z03) at (2,0) {$z^0_3=\red{x^4_3}$};
\node (z02) at (4,0) {$z^0_2=\red{x^4_2}$};
\node (z01) at (6,0) {$z^0_1=\red{x^4_1}$};

\node (z14) at (1,-1) {$z^1_4=\red{x^3_3}$};
\node (z13) at (3,-1) {$z^1_3=\red{x^3_2}$};
\node (z12) at (5,-1) {$z^1_2=\red{x^3_1}$};
\node (z11) at (7,-1) {$z^1_1=\blue{y^1_1}$};

\node (z24) at (2,-2) {$z^2_4=\red{x^2_2}$};
\node (z23) at (4,-2) {$z^2_3=\red{x^2_1}$};
\node (z22) at (6,-2) {$z^2_2=\blue{y^2_2}$};
\node (z21) at (8,-2) {$z^2_1=\blue{y^2_1}$};

\node (z34) at (3,-3) {$z^3_4=\red{x^1_1}$};
\node (z33) at (5,-3) {$z^3_3=\blue{y^3_3}$};
\node (z32) at (7,-3) {$z^3_2=\blue{y^3_2}$};
\node (z31) at (9,-3) {$z^3_1=\blue{y^3_1}$};

\node (z44) at (4,-4) {$z^4_4=\blue{y^4_4}$};
\node (z43) at (6,-4) {$z^4_3=\blue{y^4_3}$};
\node (z42) at (8,-4) {$z^4_2=\blue{y^4_2}$};
\node (z41) at (10,-4) {$z^4_1=\blue{y^4_1}$};

% arrows
\draw[->] (z04) -- (z14);
\draw[->] (z14) -- (z03);
\draw[->] (z03) -- (z13);
\draw[->] (z13) -- (z02);
\draw[->] (z02) -- (z12);
\draw[->] (z12) -- (z01);
\draw[->, very thick] (z01) -- (z11);

\draw[->] (z14) -- (z24);
\draw[->] (z24) -- (z13);
\draw[->] (z13) -- (z23);
\draw[->] (z23) -- (z12);
\draw[->, very thick] (z12) -- (z22);
\draw[->] (z22) -- (z11);
\draw[->] (z11) -- (z21);

\draw[->] (z24) -- (z34);
\draw[->] (z34) -- (z23);
\draw[->, very thick] (z23) -- (z33);
\draw[->] (z33) -- (z22);
\draw[->] (z22) -- (z32);
\draw[->] (z32) -- (z21);
\draw[->] (z21) -- (z31);

\draw[->, very thick] (z34) -- (z44);
\draw[->] (z44) -- (z33);
\draw[->] (z33) -- (z43);
\draw[->] (z43) -- (z32);
\draw[->] (z32) -- (z42);
\draw[->] (z42) -- (z31);
\draw[->] (z31) -- (z41);

\draw[dotted] (3,-4) -- (7,0);

\end{tikzpicture}
\caption{
Graphical representation of the set of variables $z^i_j$, $0\leq i\leq N$ and $1\leq j\leq N$ (here $N=4$) appearing in~\eqref{eq:WhittakerMeasure_prelimit2}.
Each arrow $a\to b$ corresponds to the term $\e^{-\tr[ab^{-1}]}$ in the integral.
Relabelling the $z^i_j$ as in~\eqref{eq:relabelling_x}-\eqref{eq:relabelling_y} yields two triangular arrays $x$ (coloured in red) and $y$ (coloured in blue).}
\label{fig:WhittakerMeasure}
\end{figure}
Recalling the definition~\eqref{eq:kerSigma_tilde} of the $\tilde{\Sigma}$-kernel, we have
\[
J^N_{\alpha}(r^N(k);z)
= \tilde{\Sigma}^N_{\hat{\alpha}(1:N)} f (r^N(k)) \, ,
%= \int_{\trian{N}{d}(r^N(k))}
%\left(\prod_{i=1}^{N-1} \prod_{j=1}^i \mu(\diff x^i_j)\right)
%f(x) \,
%\Delta^N_{\hat{\alpha}(1:N)}(x) 
%\e^{-\Phi^N(x)} \, ,
\]
where $\hat{\alpha}(1:N) := (-\alpha(N),\dots,-\alpha(1))$ and the function $f\colon \trian{N}{d}\to \R$ is defined by
\[
f(x)
:= \int_{\trian{N}{d}(z)}
\left(\prod_{i=1}^{N-1} \prod_{j=1}^i \mu(\diff y^i_j)\right)
\left(\prod_{i=1}^N \e^{-\tr\left[x^{N-i+1}_1 (y^i_i)^{-1} \right]}\right)
\Delta^N_{\alpha(1:N)}(y) \e^{-\Phi^N(y)} \, .
\]
Here, each term $\e^{-\tr\left[x^{N-i+1}_1 (y^i_i)^{-1} \right]}$ corresponds, graphically, to a bold arrow in Fig.~\ref{fig:WhittakerMeasure}.

We now wish to apply Corollary~\ref{coro:WhittakerLikeAsymptotics}.
Notice first that $f$ is a continuous function of $x$; moreover, it is bounded below by $0$ and above by $\psi^N_{\alpha(1:N)}(z)$ (cf.~\eqref{eq:WhittakerClosedForm}).
The associated functions $f_k$ defined in~\eqref{eq:f_k} are
\[
f_k(x)
= \int_{\trian{N}{d}(z)}
\left(\prod_{i=1}^{N-1} \prod_{j=1}^i \mu(\diff y^i_j)\right)
\left(\prod_{i=1}^N \e^{-k^{-(N-i)}\tr\left[x^{N-i+1}_1 (y^i_i)^{-1} \right]}\right)
\Delta^N_{\alpha(1:N)}(y) \e^{-\Phi^N(y)} \, .
\]
By dominated convergence and by the definition~\eqref{eq:WhittakerClosedForm} of Whittaker function, we have the pointwise convergence
\[
\lim_{k\to\infty} f_k(x) 
=\e^{-\tr\left[x^1_1 z_N^{-1} \right]}
\psi^N_{\alpha(1:N)}(z) =: f_{\infty}(x) \, .
\]
Notice that $\{f_k\}_{k>0}$ is a collection of continuous functions, increasing with $k$, that converges pointwise to a continuous limit; hence, by Dini's theorem (see e.g.~\cite[Theorem~7.13]{rudin76}), the convergence is uniform on compacts.
Then, the assumptions of Corollary~\ref{coro:WhittakerLikeAsymptotics} are satisfied and we have
\[
\lim_{k\to\infty}
\frac{J^N_{\alpha}(r^N(k);z)}{\psi^N_{\beta}(r^N(k))}
= \lim_{k\to\infty}
\frac{\tilde{\Sigma}^N_{\hat{\alpha}(1:N)} f (r^N(k))}{\psi^N_{\beta}(r^N(k))}
= f_{\infty}(m) \, ,
\]
where $m$ is the unique global minimiser of $\Phi^N$ on $\trian{N}{d}(I_d^N)$ (cf.\ Corollary~\ref{coro:uniqueMinimiser}).
Since $m^1_1=I_d$ by Corollary~\ref{coro:rowProdMinimiser}, we have
\[
f_{\infty}(m) = \e^{-\tr\left[z_N^{-1} \right]} \psi^N_{\alpha(1:N)}(z) \, .
\]
This yields the desired limit~\eqref{eq:WhittakerMeasure_limit}.
\end{proof}

\subsection{{Fixed-time laws of the `right edge' and `left edge' processes}}
\label{subsec:rightEdge}

Throughout this subsection, it will be convenient to work with the space of $d\times d$ \emph{positive semidefinite} matrices, i.e.\ $d\times d$ real symmetric matrices with nonnegative eigenvalues; such a space is the closure of $\pos_d$ under the standard Euclidean topology, and we thus denote it by $\spos_d$.

It is clear from the definition given in~\S~\ref{subsec:MarkovDynamics} that the `right edge' $X_1=(X^1_1,\dots,X^N_1)$ of $X$ is a Markov process in its own filtration.
Furthermore, {as mentioned before,} $X_1$ equals the system $Z=(Z^1,\dots,Z^N)$ of random particles {in} $\pos_d$ with one-sided interactions defined in~\eqref{eq:rightEdge1}-\eqref{eq:rightEdge2}, where the random weight $V^i(n)$ equals $W^i_1(n)$, an inverse Wishart random matrix with parameter $\alpha(n)+\beta^i$.
If the initial state $Z(0)$ of this process is in $\pos_d^N$ (respectively, $\spos_d^N$), then  clearly $Z$ evolves as a process in $\pos_d^N$ (respectively, $\spos_d^N$).

{Analogously, the `left edge' of $X$ is a Markov process in its own filtration.
Its `inverse' $L=(L^1,\dots,L^N):=((X^1_1)^{-1},\dots,(X^N_N)^{-1})$ {is given by}~\eqref{eq:leftEdge1}-\eqref{eq:leftEdge2}, where $U^i(n):=(W^i_i(n))^{-1}$ is a Wishart random matrix with parameter $\alpha(n)+\beta^i$.
If the initial state $L(0)$ of this process is in $\pos_d^N$ (respectively, $\spos_d^N$), then  clearly $L$ evolves as a process in $\pos_d^N$ (respectively, $\spos_d^N$).}

As the next lemma shows, the singular initial state of the bottom edge of $X$ considered in~\S~\ref{subsec:bottomEdge} induces (through Theorem~\ref{thm:bottomRowMarkov}) the initial state~\eqref{eq:stepInitialConf_matrix} on the right edge $X_1$, which resembles the step or `narrow wedge' initial configuration in systems of interacting particles/random walks.
{A similar statement holds for the left edge.}

\begin{lemma}
\label{lemma:rightEdge_initialConf}
Let $X(0)$ be distributed according to $\overline{\Sigma}_{\beta}(r^N(k);\cdot)$.
Then, on the space $\spos_d^N$, {both $(X^1_1(0),\dots,X^N_1(0))$ and $((X^1_1(0))^{-1},\dots,(X^N_N(0))^{-1})$} converge in law, as $k\to\infty$, to $(I_d,0_d,\dots,0_d)$.
\end{lemma}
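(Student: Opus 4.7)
The plan is to reduce to the rescaled setting of \S~\ref{subsec:asymptoticsWhittaker} and identify the limit explicitly via the unique minimiser $m$ of $\Phi^N$ on $\trian{N}{d}(I_d^N)$.

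First, I would use the bijective change of variables $\tilde{x}^i_j := k^{i-2j+1} x^i_j$ from the proof of Theorem~\ref{thm:WhittakerAsymptotics}, which sends $\trian{N}{d}(r^N(k))$ onto $\trian{N}{d}(I_d^N)$. Using the $\GL_d$-invariance of $\mu$ together with the identities $\Delta^N_\beta(x)=\Delta^N_\beta(\tilde{x})$ and $\Phi^N(x)=k\,\Phi^N(\tilde{x})$ established (implicitly) in that proof, one checks that if $X(0)\sim \overline{\Sigma}^N_\beta(r^N(k);\cdot)$, then the rescaled array $\tilde{X}^i_j:=k^{i-2j+1}X^i_j(0)$ has the distribution $\mu^N_k$ (with $\lambda=\beta$) introduced in the proof of Corollary~\ref{coro:WhittakerLikeAsymptotics}, supported on $\trian{N}{d}(I_d^N)$.

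Second, I would invoke Proposition~\ref{prop:saddlePoint} exactly as at the end of that proof to conclude that $\mu^N_k\to\delta_m$ weakly, where $m$ is the unique global minimiser of $\Phi^N$ on $\trian{N}{d}(I_d^N)$ given by Corollary~\ref{coro:uniqueMinimiser}. Since the limit is deterministic, this yields joint convergence in probability $\tilde{X}^i_j\to m^i_j$ for all $1\leq j\leq i\leq N$. Each $m^i_j$ is a positive scalar multiple of $I_d$ (so in particular inversion is continuous at $m^i_i$), and the top entry is $m^1_1=I_d$ by Corollary~\ref{coro:rowProdMinimiser}.

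Third, I would transport these limits back to the original coordinates via the identities $X^i_1(0)=k^{1-i}\,\tilde{X}^i_1$ and $(X^i_i(0))^{-1}=k^{-(i-1)}(\tilde{X}^i_i)^{-1}$, and apply the continuous mapping theorem. The $i=1$ components converge in law to $m^1_1=I_d$ and $(m^1_1)^{-1}=I_d$ respectively, while for $i\geq 2$ the vanishing prefactors $k^{1-i}$ and $k^{-(i-1)}$ send the remaining components to $0_d\in\spos_d$. Joint convergence in law in $\spos_d^N$ is then automatic because the target is a constant vector. The main (and only) delicate point is that the limiting components lie on the boundary $\spos_d\setminus\pos_d$; this is handled cleanly by working on $\spos_d^N$ with its Euclidean topology, so that the scaling-and-inversion maps remain continuous on $\pos_d$ and their vanishing prefactors deliver the $0_d$ limits in $\spos_d$.
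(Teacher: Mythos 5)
Your proof is correct and rests on the same ingredients as the paper's: the rescaling $\tilde{x}^i_j = k^{i-2j+1}x^i_j$, the localisation of $\mu^N_k$ at the minimiser $m$ via Prop.~\ref{prop:saddlePoint}, and the facts $m^1_1=I_d$ and $m^i_j\in\Scal_d^+$ from Corollaries~\ref{coro:uniqueMinimiser} and~\ref{coro:rowProdMinimiser}. The only difference is packaging: the paper applies Corollary~\ref{coro:WhittakerLikeAsymptotics} to the test function $f(x)=g(x^1_1,\dots,x^N_1)$, whereas you extract the weak convergence $\mu^N_k\Rightarrow\delta_m$ directly and finish with a Slutsky-type argument on the deterministic prefactors $k^{1-i}$ and $k^{-(i-1)}$ --- an equivalent reformulation.
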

\begin{proof}
{We prove the claim about $(X^1_1(0),\dots,X^N_1(0))$, as the proof of the claim about $((X^1_1(0))^{-1},\dots,(X^N_N(0))^{-1})$ is completely analogous.}

Let $g\colon \spos_d^N \to \R$ be a bounded and continuous test function.
We need to prove that
\begin{equation}
\label{eq:rightEdge_initialConf}
\lim_{k\to\infty} \E\left[ g(X_1(0)) \right] = g(I_d,0_d,\dots,0_d) \, .
\end{equation}
Let $f\colon \trian{N}{d} \to \R$, $f(x):= g(x_1) = g(x^1_1,\dots,x^N_1)$ for all $x\in \trian{N}{d}$.
By definition~\eqref{eq:kerSigma_normal} of $\overline{\Sigma}_{\beta}$, we then have
\[
\E\left[ g(X_1(0)) \right]
= \E\left[ f(X(0)) \right]
= \overline{\Sigma}_{\beta}f(r^N(k))
= \frac{\tilde{\Sigma}_{\beta}f(r^N(k))}{\psi^N_{\beta}(r^N(k))} \, .
\]
We now wish to apply Corollary~\ref{coro:WhittakerLikeAsymptotics}.
Since $g$ is bounded and continuous, $f$ also is.
The associated functions $f_k$ defined in~\eqref{eq:f_k} are
\[
f_k(x) := g\bigg(x^1_1, \frac{x^2_1}{k}, \dots, \frac{x^N_1}{k^{N-1}}\bigg) \, , \qquad x\in \trian{N}{d} \, .
\]
These functions converge as $k\to\infty$ to $f_{\infty}(x) := g(x^1_1,0_d,\dots,0_d)$ uniformly on compacts, since $g$ is continuous on $\spos_d^N$.
Therefore, by Corollary~\ref{coro:WhittakerLikeAsymptotics}, $\E\left[ g(X_1(0)) \right]$ converges as $k\to\infty$ to $g(m^1_1,0_d,\dots,0_d)$, where $m$ is the minimiser of $\Phi^N$ on $\trian{N}{d}(I_d^N)$.
By Corollary~\ref{coro:rowProdMinimiser} we have $m^1_1=I_d$, and the claim~\eqref{eq:rightEdge_initialConf} follows.
\end{proof}

As a consequence of Theorem~\ref{thm:specialInitialCond} and Lemma~\ref{lemma:rightEdge_initialConf}, we obtain:

\begin{corollary}
\label{coro:rightMarginal}
{As above, let $Z=(Z^1,\dots,Z^N)$ and $L=(L^1,\dots,L^N)$ be the right edge process and the (inverse) left edge process, respectively, with initial states $Z(0)=L(0)=(I_d,0_d,\dots,0_d)\in\spos_d^N$.
Then, for $n\geq N$, $Z^N(n)$ and $L^N(n)$ are distributed as the first marginal and the $N$-th marginal, respectively, of the {matrix Whittaker measure} with parameters $(\alpha(1), \dots, \alpha(n))$ and $\beta$.}
\end{corollary}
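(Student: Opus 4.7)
The plan is to deduce the corollary by a continuous-mapping argument that compares two weak-limit descriptions of the bottom right-edge element $X^N_1(n)$ and, analogously, of $X^N_N(n)^{-1}$, both obtained by running the triangular process $X$ from the $k$-dependent random initial state and letting $k\to\infty$.

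First, fix $k>0$ and run the triangular process $X$ from the random initial state $X(0)\sim\overline{\Sigma}^N_\beta(r^N(k);\cdot)$. Theorem~\ref{thm:bottomRowMarkov} identifies the law of $X^N(n)$ with $U^{N,n}_{\alpha,\beta}(r^N(k);\cdot)$, and Theorem~\ref{thm:specialInitialCond} shows that this measure converges in total variation, hence weakly, to $W^{N,n}_{\alpha,\beta}$ as $k\to\infty$. Passing to one-dimensional marginals, the laws of $X^N_1(n)$ and $X^N_N(n)$ converge to the first and the $N$-th marginal of the matrix Whittaker measure, respectively.

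Second, reinterpret $X^N_1(n)$ and $X^N_N(n)^{-1}$ directly via the edge recursions. By construction, $X^N_1(n)=Z^N(n)$, where $Z$ is the Markov chain defined by~\eqref{eq:rightEdge1}-\eqref{eq:rightEdge2} with initial state $Z(0)=X_1(0)$ and independent driving matrices $V^i(m)=W^i_1(m)$; analogously, $X^N_N(n)^{-1}=L^N(n)$, where $L$ satisfies~\eqref{eq:leftEdge1}-\eqref{eq:leftEdge2} with initial state given by the inverse left edge of $X(0)$ and driving matrices $U^i(m)=W^i_i(m)^{-1}$. Crucially, the driving matrices are independent of $X(0)$ and their laws do not depend on $k$. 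Lemma~\ref{lemma:rightEdge_initialConf} asserts that both initial states converge in law on $\spos_d^N$ to the step configuration $(I_d,0_d,\dots,0_d)$.

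Third, apply the continuous mapping theorem. The $n$-fold iteration of either edge recursion defines a continuous map $\spos_d^N\times\pos_d^{nN}\to\pos_d$: the key building blocks are the symmetrised product $T_y(x)=y^{1/2}xy^{1/2}$, which extends continuously to $y\in\spos_d$ with $T_{0_d}(x)=0_d$, and matrix addition. After the first time step both the $Z$- and $L$-processes instantly enter $\pos_d^N$, so the only place where boundary arguments occur is the initial application, where continuity up to $\spos_d$ suffices. Consequently, the joint weak convergence of $(X_1(0),\{W^i_j(m)\}_{m\leq n,\,j})$ (the second factor carrying a law independent of $k$) yields by continuous mapping that $X^N_1(n)$ converges in law to $Z^N(n)$ evaluated at the step initial state, and similarly that $X^N_N(n)^{-1}=L^N(n)$ converges to $L^N(n)$ with step initial state. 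Uniqueness of weak limits, combined with the first step, yields the two asserted identifications. The only mildly delicate point is the continuity of the edge dynamics at the singular boundary of $\pos_d^N$, which is the main obstacle and is handled precisely by the observation that a single application of the recursion pushes the state into the open cone, after which the iteration is manifestly continuous.
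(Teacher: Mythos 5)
Your proposal is correct and follows essentially the same route as the paper: identify the law of $X^N(n)$ under the initial distribution $\overline{\Sigma}^N_{\beta}(r^N(k);\cdot)$ via Theorem~\ref{thm:bottomRowMarkov} and Theorem~\ref{thm:specialInitialCond}, separately show via Lemma~\ref{lemma:rightEdge_initialConf} and the continuous mapping theorem that $X^N_1(n)$ and $X^N_N(n)^{-1}$ converge in law to $Z^N(n)$ and $L^N(n)$ with step initial data, and conclude by uniqueness of weak limits. One small inaccuracy in your continuity discussion: for $N\geq 3$ the left-edge process started from the step configuration does \emph{not} enter $\pos_d^N$ after a single step (e.g.\ $L^3(1)=0_d$), but this is harmless, since each iteration of the recursion is a continuous map $\spos_d^N\times\pos_d^N\to\spos_d^N$ (as $(y,x)\mapsto y^{1/2}xy^{1/2}$ is jointly continuous on $\spos_d\times\spos_d$), so the composed map is continuous without the state having to leave the boundary immediately.
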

\begin{proof}
{Again, we only prove the claim about the right edge, as the proof of the claim about the left edge is completely analogous.}

Let the process $X$ be as in Definition~\ref{def:triangularProcess}, with initial state $X(0)$ distributed according to $\overline{\Sigma}_{\beta}(r^N(k);\cdot)$.
It is clear from the definition that $X^N_1(n)$ can be written as a continuous, deterministic function of the right edge initial state $X_1(0)$ and of the collection of random matrices $(W^1_1(\ell),\dots,W^N_1(\ell))_{1\leq \ell \leq n}$.
Therefore, by Lemma~\ref{lemma:rightEdge_initialConf} and the continuous mapping theorem~\cite[Theorem~2.7]{billingsley99}, $X^N_1(n)$ converges in law as $k\to\infty$ to $Z^N(n)$.

On the other hand, by Theorem~\ref{thm:specialInitialCond}, for $n\geq N$, $X^N_1(n)$ converges in law as $k\to\infty$ to the first marginal of a {matrix Whittaker measure} with parameters $(\alpha(1), \dots, \alpha(n))$ and $\beta$.
\end{proof}

\begin{remark}
The following generalisation of Corollary~\ref{coro:rightMarginal} is immediate: under the same hypotheses, for every $1\leq i\leq N$ and {$n\geq i$}, $Z^i(n)$ is distributed as the first marginal of the {matrix Whittaker measure} with parameters $(\alpha(1), \dots, \alpha(n))$ and $(\beta^1,\dots,\beta^i)$.
This is due to the fact that, by definition, for any fixed $i\geq 1$, the process $(Z_1,\dots,Z_i)$ has both an initial configuration $(I_d,0_d,\dots,0_d)$ and a Markov evolution that do not depend on the choice of $N\geq i$.
{Analogously, for every $1\leq i\leq N$ and {$n\geq i$}, $L^i(n)$ is distributed as the {$i$-th marginal} of the same {matrix Whittaker measure}.}
\end{remark}

\section{Minimisation of energy functions and Laplace approximations}
\label{sec:minimisation}

In this section, we study minimisation problems for certain energy functions of matrix arguments on directed graphs.
As a consequence, we obtain Laplace approximations for integrals of exponentials of these energy functions.
For our purposes, the most important application of such results consists in certain asymptotics of Whittaker functions of matrix arguments; see~\S~\ref{subsec:asymptoticsWhittaker}.
However, the results of this section may be of independent interest.
For instance, the general framework we work with may be applied to obtain analogous asymptotics for \emph{orthogonal} Whittaker functions, which also appeared in the study of stochastic systems -- see~\cite{bisiZygouras19a, barraquandWang23}.

\subsection{Energy functions on directed graphs}
\label{subsec:energyFunctions}

Let us recall some terminology of graph theory that will be useful throughout this section.
A \emph{finite directed graph} $G=(V,E)$ is a pair consisting of a nonempty finite set $V$ of \emph{vertices} and a set $E \subset \{(v,w)\in V^2\colon v\neq w\}$ of \emph{edges}.
Note that edges connecting a vertex to itself are not allowed, nor are multiple edges.
The direction of an edge $(v,w)$ connecting $v$ to $w$ is given by the ordering of the pair.
For the sake of notational convenience, we also write $v\rightarrow w$ when $(v,w)\in E$, and $v \not\to w$ when $(v,w)\notin E$.
A vertex $v$ is called a \emph{sink} if it has no outcoming edges (i.e.\ if $v \not\to w$ for all $w\in V$) and a \emph{source} if it has no incoming edges (i.e.\ if $w \not\to v$ for all $w\in V$).
For any $v,w\in V$ and $0\leq l< \infty$, we call \emph{path} of length $l$ in $G$ from $v$ to $w$ any sequence $(v_0, v_1,\dots,v_l)$ such that $v_0=v$, $v_l = w$, and $v_{i-1}\to v_i$ for all $1\leq i\leq l$.
A \emph{cycle} is any path $(v_0, v_1,\dots,v_l)$ such that $v_0 = v_l$ and any other two vertices are distinct.
We say that $G$ is \emph{acyclic} if it has no cycles.
From now on, throughout the whole section, $G=(V,E)$ will always be an acyclic finite directed graph.

\begin{lemma}
\label{lemma:graph_directedPaths}
For all $v\in V$, there exists a path in $G$ from $v$ to a sink; moreover, there exists a path in $G$ from a source to $v$.
\end{lemma}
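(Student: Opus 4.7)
The plan is to argue purely combinatorially, using only that $G$ is finite and acyclic, by greedily extending paths. I would first record the preliminary observation that any path $(v_0, v_1, \ldots, v_l)$ in $G$ has pairwise distinct vertices: if $v_i = v_j$ for some $i < j$, then extracting from $(v_i, v_{i+1}, \ldots, v_j)$ a subpath of minimum length returning to its starting vertex would yield a sequence whose intermediate vertices are necessarily distinct (by minimality), hence a cycle, contradicting acyclicity. Consequently every path in $G$ has length at most $|V|-1$.

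For the first assertion, given $v \in V$, I would consider the family of paths in $G$ starting at $v$. This family is nonempty (it contains the trivial path $(v)$ of length $0$) and, by the observation above, bounded in length. Selecting a path $(v_0, v_1, \ldots, v_l)$ of maximum length with $v_0 = v$, I would argue that $v_l$ must be a sink: if not, there would exist $u \in V$ with $v_l \to u$, and $(v_0, \ldots, v_l, u)$ would be a strictly longer path (the vertex $u$ being automatically distinct from all preceding $v_i$, again by the preliminary observation applied to this extended sequence), contradicting the maximality of $l$.

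For the second assertion, I would apply the same argument to the reversed graph $G^{\mathrm{op}} := (V, E^{\mathrm{op}})$, where $E^{\mathrm{op}} := \{(w,v) : (v,w) \in E\}$. This graph is again finite and acyclic (any cycle in $G^{\mathrm{op}}$ would reverse to a cycle in $G$), its sinks coincide with the sources of $G$, and a path from a source of $G$ to $v$ corresponds bijectively, via order reversal, to a path in $G^{\mathrm{op}}$ from $v$ to a sink of $G^{\mathrm{op}}$. Applying the first assertion to $G^{\mathrm{op}}$ and the vertex $v$ thus concludes the proof.

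There is no real obstacle here; the only step requiring a moment's care is the preliminary observation on distinctness of vertices along paths, but this is entirely routine.
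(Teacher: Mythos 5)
Your proposal is correct and is essentially the same argument as the paper's: the paper phrases it as a greedy extension algorithm that must terminate (since an infinite run in a finite graph would force a repeated vertex and hence a cycle), while you phrase it as a maximal-length path necessarily ending at a sink; both rest on the same finiteness-plus-acyclicity reasoning, and your reduction of the second assertion to the first via the reversed graph matches the paper's "similar argument" remark. If anything, your preliminary observation that a repeated vertex yields a cycle only after extracting a minimal returning subpath makes explicit a small step the paper glosses over.
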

\begin{proof}
We will prove the existence of the first path only, as the existence of the second path follows from a similar argument.
We construct the path algorithmically.
Set $v_0 := v$.
For all $i=0,1,2,\dots$, we proceed as follows: if $v_i$ is a sink, then we stop the algorithm; otherwise, we pick $v_{i+1}$ to be any vertex such that $v_i \to v_{i+1}$.
If the algorithm never terminates, then there exist two distinct indices $i, j$ with $v_i=v_j$, since $G$ is finite; this implies that $G$ has a cycle, against the hypotheses.
Therefore, the procedure must stop in a finite number $l$ of steps, thus yielding a path $(v_0, v_1, \dots , v_l)$ from $v_0=v$ to a sink $v_l$.
\end{proof}

For any integer $d\geq 1$, let $\Sym_d$, $\Diag_d$, and $\Scal_d$ be the sets of $d\times d$ real symmetric matrices, real diagonal matrices, and real scalar matrices (i.e.\ multiples of the $d\times d$ identity matrix $I_d$), respectively.
We will write $\Sym_d^V$ for the set of arrays $x = (x_v)_{v\in V}$, where each $x_v\in\Sym_d$.
We will use the notations $\Diag_d^V$ and $\Scal_d^V$ in a similar way.

Let us define the `energy functions'
\begin{align}
\label{eq:energyFn_phi}
\phi_d \colon \Sym_d^V \to \R \, , &&\phi_d(x) &:= \sum_{\substack{v,w \in V \colon \\ v \to w}} \tr[\e^{x_v} \e^{-x_w}] \, , \\
\label{eq:energyFn_chi}
\chi_d \colon \Sym_d^V \to \R \, , &&\chi_d(x) &:= \sum_{\substack{v,w \in V \colon \\ v \to w}} \tr[\e^{x_v -x_w}] \, ,
\end{align}
where $\e^a$ denotes the usual exponential of the matrix $a$.
%associated to the weight functions $f_d$ and $g_d$, respectively.
The Golden-Thompson inequality (see e.g.~\cite{bhatia97}) states that $\tr[\e^a \e^b] \geq \tr[\e^{a+b}]$ if $a$ and $b$ are symmetric matrices.
It follows that
\begin{equation}
\label{eq:goldenThompson}
\phi_d(x) \geq \chi_d(x)
\qquad\qquad
\text{for all } x\in \Sym_d^V \, .
\end{equation}
However, the two energy functions are identical only for $d=1$.

Notice that, by Lemma~\ref{lemma:graph_directedPaths}, $G$ has at least one sink and one source, possibly coinciding.
Throughout, we also assume that there exists at least one vertex of $G$ that is neither a source nor a sink.
We can thus fix a subset $\Gamma\subset V$ that contains all the sinks and sources and such that $\Gamma^{\mathsf{c}}$, the complement of $\Gamma$ in $V$, is nonempty.
For any set $S$ and any fixed array $z = (z_v)_{v\in \Gamma} \in S^{\Gamma}$, let
\begin{equation}
\label{eq:cylinder}
S^V(z) := \{ x=(x_v)_{v\in V} \in S^V \colon x_v = z_v \text{ for all } v\in \Gamma\} \, .
\end{equation}

Our first result concerns the asymptotic behaviour of the energy functions on $\Sym_d^V(z)$.
Let $\norm{\cdot}$ denote any norm on $\Sym_d^V$.
\begin{proposition}
\label{prop:phiAtInfinity}
Let $z\in\Sym_d^\Gamma$.
For $x\in\Sym_d^V(z)$, we have $\phi_d(x) \to \infty$ and $\chi_d(x) \to \infty$ as $\norm{x} \to \infty$.
\end{proposition}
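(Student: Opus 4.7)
The plan is to reduce to $\chi_d$ via the Golden--Thompson inequality \eqref{eq:goldenThompson}, and then establish coercivity of $\chi_d$ on $\Sym_d^V(z)$ by propagating bounds from the boundary set $\Gamma$ through the acyclic graph $G$. Since $\phi_d \geq \chi_d$ by \eqref{eq:goldenThompson}, it suffices to prove the statement for $\chi_d$. Arguing by contrapositive, I will show that any sequence $(x^{(n)}) \subset \Sym_d^V(z)$ with $\chi_d(x^{(n)}) \leq M$ for all $n$ must be bounded. Since $x_v^{(n)} = z_v$ is fixed on $\Gamma$, this reduces to bounding, uniformly in $n$, both $\lambda_{\max}(x_v^{(n)})$ from above and $\lambda_{\min}(x_v^{(n)})$ from below for every $v \in \Gamma^{\mathsf{c}}$.

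The key estimate is the elementary inequality $\lambda_{\max}(A) \leq \log \tr[\e^A]$ for symmetric $A$, which follows from $\tr[\e^A] = \sum_i \e^{\lambda_i(A)} \geq \e^{\lambda_{\max}(A)}$. Because every summand of $\chi_d$ is positive, $\chi_d(x)$ dominates any individual summand, so for every edge $v \to w$,
\[
\lambda_{\max}(x_v - x_w) \leq \log \tr[\e^{x_v - x_w}] \leq \log \chi_d(x) \leq \log M \, .
\]

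To bound $\lambda_{\max}(x_v)$ from above for $v \in \Gamma^{\mathsf{c}}$, I invoke Lemma~\ref{lemma:graph_directedPaths} to pick a directed path $v = u_0 \to u_1 \to \cdots \to u_p = t$ from $v$ to a sink $t \in \Gamma$; telescoping $x_v = z_t + \sum_{i=1}^p (x_{u_{i-1}} - x_{u_i})$ and iterating Weyl's inequality $\lambda_{\max}(A+B) \leq \lambda_{\max}(A) + \lambda_{\max}(B)$ yields $\lambda_{\max}(x_v) \leq \lambda_{\max}(z_t) + p \log M$ with $p \leq \abs{V}-1$. Symmetrically, for a lower bound on $\lambda_{\min}(x_v)$ I pick, via the same lemma, a path $s = w_0 \to w_1 \to \cdots \to w_m = v$ from a source $s \in \Gamma$ to $v$; telescoping $z_s - x_v = \sum_{i=1}^m (x_{w_{i-1}} - x_{w_i})$, the same edge estimate gives $\lambda_{\max}(z_s - x_v) \leq m \log M$, whence
\[
-\lambda_{\min}(x_v) = \lambda_{\max}(-x_v) \leq \lambda_{\max}(z_s - x_v) - \lambda_{\min}(z_s) \leq m \log M - \lambda_{\min}(z_s) \, .
\]
Both bounds are finite and independent of $n$, contradicting $\norm{x^{(n)}} \to \infty$.

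There is no serious obstacle; the one subtlety to get right is the orientation of the chosen paths. The direction of each telescoping difference must match the direction of an edge in $G$ so that the positivity of the summands of $\chi_d$ can be exploited. This is why the upper bound on $\lambda_{\max}$ uses a path \emph{exiting} $v$ toward a sink, while the lower bound on $\lambda_{\min}$ uses a path \emph{entering} $v$ from a source --- both being available thanks to the two halves of Lemma~\ref{lemma:graph_directedPaths}.
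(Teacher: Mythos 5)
Your proof is correct and follows essentially the same route as the paper's: reduce to $\chi_d$ via Golden--Thompson, then use Lemma~\ref{lemma:graph_directedPaths} to connect each $v\in\Gamma^{\mathsf{c}}$ to a sink and from a source, and propagate eigenvalue bounds along those paths using subadditivity of $\lambda_{\max}$. The only cosmetic difference is that you bound each edge term individually by $\log M$ via $\lambda_{\max}(A)\le\log\tr[\e^A]$ and multiply by the path length, whereas the paper bounds the sum over the path at once using $\e^{\alpha}\ge\alpha$; both are the same idea.
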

\begin{proof}
By inequality~\eqref{eq:goldenThompson}, it suffices to prove the claim for $\chi_d$.
As all norms on a finite-dimensional space are equivalent, we may arbitrarily take
\begin{equation}
\label{eq:norm}
\norm{x} := \sum_{v\in V} \rho(x_v) \qquad\qquad
\text{for } x\in \Sym_d^V \, ,
\end{equation}
where $\rho(a)$ denotes the spectral radius of a symmetric matrix $a$ (i.e.\ the largest absolute value of its eigenvalues).
As the spectral radius is a norm on $\Sym_d$, it can be easily verified that~\eqref{eq:norm} defines a norm on $\Sym_d^V$.
We will show that, for any sequence $(x^{(n)})_{n\geq 1} \subseteq \Sym_d^V(z)$ such that $\lVert x^{(n)}\rVert \to\infty$ as $n\to\infty$, we have $\chi_d(x^{(n)})\to\infty$ as $n\to\infty$.
For the sake of notational simplicity, we will drop the superscript of $x^{(n)}$ and leave the dependence on $n$ implicit.

By contradiction, assume that there exists a positive constant $C$ such that, along a subsequence, $\chi_d(x)\leq C$.
Since $\norm{x}\to\infty$, there exists $w\in \Gamma^{\mathsf{c}}$ such that, along a further subsequence, $\rho(x_w)\to\infty$.
This implies that, passing to a final subsequence, either $\maxeig(x_w) \to\infty$ or $\maxeig(-x_w) \to\infty$, where $\maxeig(a)$ denotes the maximum eigenvalue of a symmetric matrix $a$.
As $w\in \Gamma^{\mathsf{c}}$, it is neither a source nor a sink.
By Lemma~\ref{lemma:graph_directedPaths}, there exists a path $(v_0, v_1, \dots, v_l)$ of length $l\geq 1$ in $G$ from $v_0=w$ to a sink $v_l \in \Gamma$.
Since $G$ has no cycles, we have $v_i \neq v_j$ for all $i\neq j$; therefore, all directed edges $v_{i-1} \to v_i$ ($1\leq i\leq d$) are distinct.
We thus have
\begin{equation}
\label{eq:upperBoundChi}
\begin{split}
C \geq \chi_d(x) \geq \sum_{i=1}^l \tr[\e^{x_{v_{i-1}} - x_{v_i}}] \geq \sum_{i=1}^l \e^{\maxeig(x_{v_{i-1}} - x_{v_i})} \geq \sum_{i=1}^l \maxeig(x_{v_{i-1}} - x_{v_i}) \, ,
\end{split}
\end{equation}
where we used the bounds $\tr[\e^y] \geq \maxeig(\e^y) = \e^{\maxeig(y)}$ for $y\in \Sym_d$ and $\e^\alpha \geq \alpha$ for $\alpha\in\R$.
Recall now that, for any $a,b\in \Sym_d$,
\begin{equation}
\label{eq:maxEingevalue}
\maxeig(a+b) \leq \maxeig(a) + \maxeig(b) \, .
\end{equation}
By iterating~\eqref{eq:maxEingevalue} several times and using~\eqref{eq:upperBoundChi}, we obtain
\[
\maxeig(x_w)
\leq \sum_{i=1}^l 
\maxeig(x_{v_{i-1}}-x_{v_i}) + \maxeig(x_{v_l})
\leq C + \maxeig(x_{v_l}) \, .
\]
By considering a path $(u_0, u_1, \dots, u_m)$ of length $m\geq 1$ from a source $u_0\in\Gamma$ to $u_m=w$ (which again exists by Lemma~\ref{lemma:graph_directedPaths}) and using similar bounds, we also have
\[
\maxeig(-x_w)
\leq \maxeig(-x_{u_0}) + C \, .
\]
Since either $\maxeig(x_w) \to\infty$ or $\maxeig(-x_w) \to\infty$, it follows that either $\maxeig(x_{v_l}) \to\infty$ or $\maxeig(-x_{u_0}) \to\infty$.
This contradicts the fact that $x_{v_l} = z_{v_l}$ and $x_{u_0} = z_{u_0}$ are both fixed for all $x\in\Sym_d^V(z)$, since $v_l, u_0 \in \Gamma$.
\end{proof}

\begin{remark}
Above we have assumed that $G=(V,E)$ is acyclic and that $\Gamma$ is a subset of $V$ containing all the sinks and sources of $G$.
We stress that both hypotheses are necessary for Prop.~\ref{prop:phiAtInfinity} to hold.
As a counterexample, let $G$ be the cycle graph with $n$ vertices and let $\Gamma=\emptyset$.
If $a\in\Sym_d$ and $x=(x_v)_{v\in V}$ is the array with $x_v =a$ for all $v$, then
\[
\phi_d(x)
= \chi_d(x)
= \underbrace{\tr[I_d] + \dots + \tr[I_d]}_{n \text{ times}}
= dn
\]
is constant in $a$; however, for the norm $\norm{\cdot}$ defined in~\eqref{eq:norm}, if $\rho(a)\to\infty$, then $\norm{x}\to\infty$.
\end{remark}

\subsection{Minima of energy functions}

We now study the minima of the functions~\eqref{eq:energyFn_phi}-\eqref{eq:energyFn_chi} on the set $\Sym_d^V(z)$, where $z\in\Sym_d^\Gamma$.
In words, we wish to minimise the energy functions subject to the constraint that some of the entries of the input array (precisely, those indexed by the vertices of the subset $\Gamma$) are fixed.

We start with the simplest case $d=1$, in which $\Sym_1 = \Diag_1 = \Scal_1 = \R$ and the two energy functions coincide:
\[
\phi_1 = \chi_1 \colon \R^V \to \R \, , \qquad \phi_1(x) = \chi_1(x) = \sum_{v \to w} \e^{x_v - x_w} \, .
\]
We denote by $\partial_v$ the partial derivative of a function on $\R^V$ with respect to the variable $x_v$.

\begin{lemma}
\label{lemma:convexityPhi_1}
Let $z\in\R^\Gamma$.
The Hessian matrix of $\phi_1$ on $\R^V(z)$ is positive definite everywhere.
In particular, $\phi_1$ is strictly convex on $\R^V(z)$.
\end{lemma}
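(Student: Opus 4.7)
The plan is to compute the Hessian of $\phi_1$ explicitly and recognise the associated quadratic form as a weighted Dirichlet energy on the graph $G$. First I would compute the first partial derivatives: for $u \in \Gamma^{\mathsf{c}}$,
\[
\partial_u \phi_1(x) = \sum_{u \to w} \e^{x_u - x_w} - \sum_{v \to u} \e^{x_v - x_u},
\]
from which the diagonal Hessian entries
\[
\partial_u^2 \phi_1(x) = \sum_{u \to w} \e^{x_u - x_w} + \sum_{v \to u} \e^{x_v - x_u}
\]
and the off-diagonal entries $\partial_u \partial_{u'} \phi_1(x) = -\e^{x_u - x_{u'}}\,\1_{u \to u'} - \e^{x_{u'} - x_u}\,\1_{u' \to u}$ (for distinct $u, u' \in \Gamma^{\mathsf{c}}$) follow at once. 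All other partial derivatives are irrelevant, since the variables indexed by $\Gamma$ are fixed on $\R^V(z)$.

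Next, given any $y = (y_u)_{u \in \Gamma^{\mathsf{c}}} \in \R^{\Gamma^{\mathsf{c}}}$, I extend it to $\tilde{y} \in \R^V$ by setting $\tilde{y}_v := 0$ for every $v \in \Gamma$ and $\tilde{y}_u := y_u$ for $u \in \Gamma^{\mathsf{c}}$. A direct bookkeeping over edges, splitting according to whether each endpoint lies in $\Gamma$ or $\Gamma^{\mathsf{c}}$, then yields the key identity
\[
y^{\top}\,\mathrm{Hess}(\phi_1)(x)\,y \;=\; \sum_{v \to w} \e^{x_v - x_w}\,(\tilde{y}_v - \tilde{y}_w)^2.
\]
The identity is verified edge by edge: each oriented edge $v \to w$ contributes $+\e^{x_v - x_w}$ to $\partial_v^2$ if $v \in \Gamma^{\mathsf{c}}$, $+\e^{x_v - x_w}$ to $\partial_w^2$ if $w \in \Gamma^{\mathsf{c}}$, and $-\e^{x_v - x_w}$ to $\partial_v\partial_w$ if both endpoints lie in $\Gamma^{\mathsf{c}}$, which together assemble into the single square $\e^{x_v-x_w}(\tilde{y}_v - \tilde{y}_w)^2$ under the extension convention $\tilde{y}_{\Gamma} \equiv 0$. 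The right-hand side is manifestly nonnegative, and vanishes only when $\tilde{y}_v = \tilde{y}_w$ for every edge $v \to w$ of $G$.

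The final step invokes the graph-theoretic assumption on $\Gamma$. By Lemma~\ref{lemma:graph_directedPaths}, every vertex $u \in V$ is the starting point of some directed path that terminates at a sink, and every sink belongs to $\Gamma$ by hypothesis. Propagating the edgewise equality $\tilde{y}_v = \tilde{y}_w$ along such a path forces $\tilde{y}_u$ to equal the value of $\tilde{y}$ at that sink, which is $0$ by construction. Hence $\tilde{y} \equiv 0$, so $y = 0$, proving that the Hessian is positive definite everywhere on $\R^V(z)$ and that $\phi_1$ is strictly convex on this affine subspace. No step is particularly delicate: the only substantive move is recognising the quadratic form as a weighted Dirichlet energy, after which positive-definiteness reduces to the elementary connectivity fact supplied by Lemma~\ref{lemma:graph_directedPaths}.
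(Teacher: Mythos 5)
Your proposal is correct and follows essentially the same route as the paper: compute the Hessian entries, extend the test vector by zero on $\Gamma$ so the quadratic form becomes the weighted sum of squares $\sum_{v\to w}\e^{x_v-x_w}(\tilde{y}_v-\tilde{y}_w)^2$, and then use Lemma~\ref{lemma:graph_directedPaths} to propagate the edgewise equalities to a sink in $\Gamma$, forcing the vector to vanish. No substantive differences.
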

\begin{proof}
On $\R^V(z)$ the variables indexed by $\Gamma$ are fixed to the assigned values $z$, hence we can consider $\phi_1$ and its Hessian as functions of $(x_v)_{v\in \Gamma^{\mathsf{c}}}$.
For $v,w\in\Gamma^{\mathsf{c}}$, we have
\[
\partial_v \partial_w \phi_1 =
\begin{dcases}
\sum_{u\in V} \left(\e^{x_v - x_u} \1_{v\to u} + \e^{x_u - x_v} \1_{u\to v} \right) &\text{if } v=w \, , \\
- \e^{x_v - x_w} \1_{v\to w} - \e^{x_w - x_v} \1_{w\to v} &\text{if } v\neq w \, .
\end{dcases}
\]
Thus, the quadratic form of the Hessian of $\phi_1$ on $\R^V(z)$, as a function of $\alpha = (\alpha_v)_{v\in \Gamma^{\mathsf{c}}}$, is
\[
\begin{split}
\sum_{v,w \in \Gamma^{\mathsf{c}}} \alpha_v \alpha_w \partial_v \partial_w \phi_1
= \, &\sum_{v\in \Gamma^{\mathsf{c}}} \alpha_v^2 \sum_{u\in V} \left(\e^{x_v - x_u} \1_{v\to u} + \e^{x_u - x_v} \1_{u\to v} \right) \\
&+ 2 \sum_{v,w\in \Gamma^{\mathsf{c}}} \alpha_v \alpha_w \left( - \e^{x_v - x_w} \1_{v\to w} - \e^{x_w - x_v} \1_{w\to v} \right) \, .
\end{split}
\]
Setting $\alpha_v :=0$ for all $v\in \Gamma$, it is easy to see that the latter expression equals
\[
\sum_{\substack{v,w\in V \colon \\ v\to w}} \e^{x_v - x_w} \left(\alpha_v^2 + \alpha_w^2 - 2\alpha_v \alpha_w \right)
=\sum_{\substack{v,w\in V \colon \\ v\to w}} \e^{x_v - x_w} \left(\alpha_v - \alpha_w \right)^2 \geq 0 \, .
\]
Therefore, the Hessian is positive semidefinite everywhere.
To prove that it is in fact positive definite, we will show that, if the quadratic form of the Hessian vanishes at $\alpha$, then $\alpha=0$.
If the above expression vanishes, then $\alpha_v = \alpha_w$ for all $v,w\in V$ such that $v\to w$.
Let $v \in \Gamma^{\mathsf{c}}$.
By Lemma~\ref{lemma:graph_directedPaths}, there exists a path from $v$ to a sink $s\in \Gamma$.
The value $\alpha_w$ is then the same for all the vertices $w$ along such a path.
We then have $\alpha_{v} = \alpha_s = 0$, since $s\in\Gamma$.
As $v\in \Gamma^{\mathsf{c}}$ was arbitrary, it follows that $\alpha=(\alpha_v)_{v\in \Gamma^{\mathsf{c}}} =0$.
\end{proof}

\begin{proposition}
\label{prop:minimiser_d=1}
The function $\phi_1=\chi_1$ has a unique (global) minimiser on $\R^V(z)$.
\end{proposition}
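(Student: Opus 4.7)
The plan is to combine the two ingredients already established: strict convexity (Lemma~\ref{lemma:convexityPhi_1}) and the coercivity property of Proposition~\ref{prop:phiAtInfinity}. Note that $\R^V(z)$ is a closed affine subspace of $\R^V$ (it is obtained by fixing the coordinates indexed by $\Gamma$), so it is a finite-dimensional real vector space of dimension $|\Gamma^{\mathsf{c}}|$ on which we may apply standard tools from convex analysis.

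For existence, I would argue by coercivity. Pick any point $x_0\in\R^V(z)$. By Proposition~\ref{prop:phiAtInfinity} applied with $d=1$, there exists $R>0$ such that $\phi_1(x)>\phi_1(x_0)$ for every $x\in\R^V(z)$ with $\norm{x}>R$. The set $K:=\{x\in\R^V(z):\norm{x}\leq R\}$ is closed and bounded in the finite-dimensional space $\R^V(z)$, hence compact. Since $\phi_1$ is continuous, it attains its infimum on $K$ at some point $m$; by construction $\phi_1(m)\leq\phi_1(x_0)<\phi_1(x)$ for all $x\in\R^V(z)\setminus K$, so $m$ is a global minimiser of $\phi_1$ on $\R^V(z)$.

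For uniqueness, I would invoke strict convexity, which follows from Lemma~\ref{lemma:convexityPhi_1}: the Hessian of $\phi_1$ on the affine subspace $\R^V(z)$ is positive definite everywhere, so $\phi_1$ is strictly convex on this convex set. If $m$ and $m'$ were two distinct global minimisers with common minimum value $c$, then their midpoint $\tfrac12(m+m')$ lies in $\R^V(z)$ (since $\R^V(z)$ is affine), and strict convexity would give $\phi_1(\tfrac12(m+m'))<\tfrac12\phi_1(m)+\tfrac12\phi_1(m')=c$, contradicting the minimality of $c$. Hence the minimiser is unique.

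There is no real obstacle here, as the two non-trivial facts (coercivity and strict convexity) have already been proved; the only thing to double-check is that $\R^V(z)$ is indeed a convex set stable under taking midpoints, which is clear because the defining constraints $x_v=z_v$ for $v\in\Gamma$ are linear. The same scheme will presumably be the template for the analogous uniqueness statements for $\phi_d$ and $\chi_d$ in the matrix case treated later in this section.
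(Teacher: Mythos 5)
Your proof is correct and follows essentially the same route as the paper: uniqueness from the strict convexity established in Lemma~\ref{lemma:convexityPhi_1}, and existence by combining continuity on a large closed ball with the coercivity from Proposition~\ref{prop:phiAtInfinity}. Your version merely spells out the midpoint argument and the choice of radius in more detail.
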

\begin{proof}
By Lemma~\ref{lemma:convexityPhi_1}, $\phi_1$ is a strictly convex function over the convex set $\R^V(z)$; therefore, it has at most one minimiser.
It remains to show the existence of a minimiser.
Since $\phi_1$ is a continuous function, it admits at least one minimiser on every closed ball $B_r := \{ x\in \R^V(z)\colon \norm{x}\leq r\}$.
By Prop.~\ref{prop:phiAtInfinity}, for $r$ large enough, the minimiser on $B_r$ is also a (global) minimiser on $\R^V(z)$.
\end{proof}

The case $d>1$ is much more challenging, and we are able to deal with it only under rather strong assumptions on the fixed array $z$.
Nonetheless, this is sufficient for our ultimate purposes.

We will be using the fact that the relation between the eigenvalues and the diagonal entries of a symmetric matrix is completely characterised by the \emph{majorisation} relation.
Let us briefly explain this statement, referring to~\cite[\S~4.3]{hornJohnson13} for proofs and details.
For any $\alpha = (\alpha_1, \dots, \alpha_d)\in \R^d$, let us denote by $\alpha^{\downarrow} = (\alpha^{\downarrow}_1, \dots, \alpha^{\downarrow}_d)$ its nonincreasing rearrangement, i.e.\ the permutation of the coordinates of $\alpha$ such that $\alpha^{\downarrow}_1 \geq \alpha^{\downarrow}_2 \geq \dots \geq \alpha^{\downarrow}_d$.
Given  $\alpha, \beta \in\R^d$, we say that $\alpha$ \emph{majorises} $\beta$, and write $\alpha \succ \beta$, if
\begin{equation}
\label{eq:majorisation}
\sum_{i=1}^k \alpha^{\downarrow}_i \geq \sum_{i=1}^k \beta^{\downarrow}_i
\qquad
\text{for } 1\leq k\leq d-1
\qquad
\text{and}\qquad
\sum_{i=1}^d \alpha_i = \sum_{i=1}^d \beta_i \, .
\end{equation}
\begin{theorem}[{\cite[Theorem~4.3.45]{hornJohnson13}}]
\label{thm:majorisation}
Let $x\in \Sym_d$.
Let $\lambda = (\lambda_1, \dots \lambda_d)$ be the vector of the (real) eigenvalues of $x$, taken in any order.
Let $\delta_i := x(i,i)$ for $1\leq i\leq d$, so that $\delta = (\delta_1, \dots, \delta_d)$ is the vector of the diagonal entries of $x$.
Then we have $\lambda \succ \delta$, and the equality $\lambda^{\downarrow} = \delta^{\downarrow}$ holds if and only if $x$ is a diagonal matrix.
\end{theorem}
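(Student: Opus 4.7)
The statement is the classical Schur majorisation theorem, and the plan is to derive it from the spectral decomposition together with Birkhoff's theorem on doubly stochastic matrices.

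First, by the spectral theorem, write $x = U \Lambda U^{\top}$ with $U$ orthogonal and $\Lambda = \diag(\lambda_1,\dots,\lambda_d)$. Computing the $(i,i)$-entry gives
\[
\delta_i = \sum_{j=1}^d U(i,j)^2 \, \lambda_j \, ,
\]
so that $\delta = S\lambda$ where $S$ is the matrix with entries $S_{ij} := U(i,j)^2$. Since $U$ is orthogonal, the rows and columns of $S$ sum to $1$, so $S$ is doubly stochastic. By Birkhoff's theorem, we may write $S = \sum_k c_k P_k$ as a convex combination of permutation matrices $P_k$. Hence $\delta = \sum_k c_k (P_k \lambda)$ is a convex combination of permutations of $\lambda$. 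The standard rearrangement inequality then gives, for each $1 \leq m \leq d-1$, $\sum_{i=1}^m \delta^{\downarrow}_i \leq \sum_k c_k \sum_{i=1}^m (P_k\lambda)^{\downarrow}_i = \sum_{i=1}^m \lambda^{\downarrow}_i$; and the equality $\sum_i \delta_i = \tr x = \sum_i \lambda_i$ is immediate. This establishes $\lambda \succ \delta$.

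For the characterisation of the equality case $\lambda^{\downarrow}=\delta^{\downarrow}$, I would argue by induction on $d$. By reordering the standard basis (conjugation by a permutation matrix preserves the property of being diagonal and just permutes $\delta$), we may assume $\delta$ and $\lambda$ are both nonincreasing, so $\delta_i = \lambda_i$ for every $i$. The Courant--Fischer minimax characterisation yields $\lambda_1 = \max_{\|v\|=1} v^{\top} x v \geq e_1^{\top} x e_1 = \delta_1 = \lambda_1$, so the maximum is attained at $v = e_1$, which therefore is a top eigenvector: $x e_1 = \lambda_1 e_1$. By symmetry, $x$ has $\lambda_1$ in position $(1,1)$ and zeros elsewhere in the first row and column, so $x = (\lambda_1) \oplus x'$ with $x' \in \Sym_{d-1}$ having eigenvalues $\lambda_2,\dots,\lambda_d$ and diagonal entries $\delta_2,\dots,\delta_d$. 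The induction hypothesis applied to $x'$ (whose rearranged eigenvalue and diagonal vectors still coincide) shows $x'$ is diagonal, and hence so is $x$.

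The main delicacy in this argument is the equality case: one has to handle possibly repeated eigenvalues, so the minimax step must be stated in the variational form that only uses attainment of the maximum (not uniqueness of the maximiser), which is exactly how the argument above is set up. The ``if'' direction of the equality case is trivial, since a diagonal matrix has its diagonal entries as eigenvalues.
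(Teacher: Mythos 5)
Your proof is correct, but note that the paper does not prove this statement at all: it is quoted verbatim as \cite[Theorem~4.3.45]{hornJohnson13} and used as a black box, so there is no internal proof to compare against. What you have written is a complete, self-contained proof of the classical Schur majorisation theorem together with the equality case. The first half (spectral decomposition, $\delta = S\lambda$ with $S$ doubly stochastic, Birkhoff, then the partial-sum bound for convex combinations of permutations of $\lambda$) is the standard route and is sound. The second half is the more delicate part and you handle it correctly: after conjugating by a permutation so that $\delta_i=\lambda^{\downarrow}_i$ for all $i$, the attainment of the Rayleigh-quotient maximum at $e_1$ forces $e_1$ to lie in the top eigenspace (this only needs attainment, not uniqueness, so repeated eigenvalues cause no trouble), the matrix splits as $(\lambda^{\downarrow}_1)\oplus x'$, and the induction hypothesis applies to $x'$ because removing one copy of the largest eigenvalue and the first diagonal entry leaves matching sorted vectors. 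The only thing to flag is cosmetic: since the result is standard and the paper deliberately outsources it, in the paper's economy one would simply keep the citation rather than inline this proof; but as mathematics your argument is complete and there is no gap.
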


We now briefly introduce the concept of Schur convexity and state the criterion that is useful for our purposes, referring e.g.\ to~\cite[Ch.~I.3]{marshallOlkinArnold11} for more details.
A function $H\colon \R^d \to \R$ is called \emph{Schur-convex} if $H(\alpha) \geq H(\beta)$ for all $\alpha,\beta\in \R^d$ such that $\alpha \succ \beta$.
In particular, for all $\alpha,\beta$ such that $\alpha^{\downarrow} = \beta^{\downarrow}$, we have $\alpha \succ \beta \succ \alpha$, hence $H(\alpha) = H(\beta)$; in other words, every Schur-convex function is a symmetric function.
Additionally, $H$ is called \emph{strictly Schur-convex} if $H(\alpha) > H(\beta)$ for all $\alpha,\beta\in \R^d$ such that $\alpha \succ \beta$ and $\alpha^{\downarrow} \neq \beta^{\downarrow}$.

\begin{theorem}[{\cite[Ch.~I.3, \S~C]{marshallOlkinArnold11}}]
\label{thm:SchurConvex}
Let $h\colon \R \to \R$ and
\[
H\colon \R^d \to \R \, , \qquad\qquad
H(\alpha_1,\dots,\alpha_d) = \sum_{i=1}^d h(\alpha_i) \, .
\]
If $h$ is convex, then $H$ is Schur-convex.
If $h$ is strictly convex, then $H$ is strictly Schur-convex.
\end{theorem}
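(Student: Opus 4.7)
The plan is to reduce the assertion to a classical characterisation of majorisation and then apply Jensen's inequality to the convex function $H$. First I would recall the Hardy--Littlewood--P\'olya and Birkhoff--von Neumann theorems: $\alpha\succ\beta$ if and only if $\beta = D\alpha$ for some $d\times d$ doubly stochastic matrix $D$, and every such $D$ is a convex combination of permutation matrices. Consequently one can write $\beta=\sum_{k}\lambda_k P_k\alpha$ with $\lambda_k>0$, $\sum_k\lambda_k=1$, and each $P_k\alpha$ a permutation of $\alpha$.

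Next I would observe that $H(x)=\sum_{i=1}^d h(x_i)$ is convex on $\R^d$ as a sum of convex functions of separate variables, and is invariant under any permutation of the coordinates. Combining these two facts with Jensen's inequality yields
\[
H(\beta) \;=\; H\Big(\sum_k \lambda_k P_k\alpha\Big) \;\le\; \sum_k \lambda_k H(P_k\alpha) \;=\; \sum_k \lambda_k H(\alpha) \;=\; H(\alpha),
\]
which establishes Schur-convexity.

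For the strict statement, I would note that when $h$ is strictly convex, $H$ inherits strict convexity on $\R^d$ (a deviation in even a single coordinate makes the one-variable Jensen estimate strict). Now suppose $\alpha\succ\beta$ with $\alpha^\downarrow\ne\beta^\downarrow$; then $\beta$ is not itself a permutation of $\alpha$, so among the vectors $\{P_k\alpha:\lambda_k>0\}$ at least two must be distinct (otherwise $\beta$ would coincide with their common value, a permutation of $\alpha$). Strict convexity of $H$ then upgrades the Jensen step above to a strict inequality $H(\beta)<H(\alpha)$, giving strict Schur-convexity.

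The only real obstacle is importing the HLP/Birkhoff--von Neumann characterisation; once that is in hand, everything follows from elementary convexity. An alternative, entirely self-contained route would be Muirhead's lemma on \emph{T-transforms}: $\beta$ can be obtained from $\alpha$ by finitely many steps, each replacing a pair $(a,b)$ by $(\lambda a+(1-\lambda)b,\lambda b+(1-\lambda)a)$ for some $\lambda\in[0,1]$. Convexity of $h$ immediately gives $h(\lambda a+(1-\lambda)b)+h(\lambda b+(1-\lambda)a)\le h(a)+h(b)$ at each step (strict whenever $\lambda\in(0,1)$, $a\ne b$, and $h$ is strictly convex), and chaining the inequalities reproduces both statements. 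I would favour the doubly stochastic route for brevity, but the T-transform argument makes the strict case particularly transparent, since $\alpha^\downarrow\ne\beta^\downarrow$ forces at least one nontrivial step in the reduction.
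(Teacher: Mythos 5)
The paper does not prove this statement at all: it is quoted verbatim from Marshall--Olkin--Arnold (Ch.~I.3, \S~C) and used as a black box, so there is no internal proof to compare against. Your argument is a correct and entirely standard proof of the cited result: the Hardy--Littlewood--P\'olya characterisation of $\alpha\succ\beta$ via doubly stochastic matrices, Birkhoff--von Neumann, permutation invariance of $H$, and Jensen give Schur-convexity, and your handling of the strict case is sound --- if $\alpha^{\downarrow}\neq\beta^{\downarrow}$ then $\beta$ is not a permutation of $\alpha$, so at least two of the vectors $P_k\alpha$ carrying positive weight are distinct, and strict convexity of $H$ (which $H$ inherits coordinatewise from $h$) makes the Jensen step strict. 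The T-transform alternative is equally valid and is essentially the route taken in the cited reference; either version would serve as a self-contained substitute for the citation.
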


As a consequence of the results just stated, we obtain:    
\begin{proposition}
\label{prop:traceInequality}
Suppose that $x\in \Sym_d$ and $y\in \Diag_d$ have the same diagonal entries.
Then $\tr[\e^x] \geq \tr[\e^y]$, and the equality holds if and only if $x=y$.
\end{proposition}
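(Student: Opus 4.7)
The plan is to read off the result as a direct consequence of Theorem~\ref{thm:majorisation} and Theorem~\ref{thm:SchurConvex}, applied to the strictly convex function $t\mapsto\e^t$.

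First I would fix notation. Write $\delta=(\delta_1,\dots,\delta_d)$ for the common diagonal of $x$ and $y$, and $\lambda=(\lambda_1,\dots,\lambda_d)$ for the eigenvalues of $x$. Since $y\in\Diag_d$ has $\delta$ on its diagonal, the eigenvalues of $y$ are exactly $\delta$. Then
\[
\tr[\e^x]=\sum_{i=1}^d \e^{\lambda_i},\qquad \tr[\e^y]=\sum_{i=1}^d \e^{\delta_i}.
\]
Theorem~\ref{thm:majorisation} applied to $x$ yields $\lambda\succ\delta$, with $\lambda^{\downarrow}=\delta^{\downarrow}$ if and only if $x$ is diagonal.

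Next, since the scalar function $h(t)=\e^t$ is strictly convex on $\R$, Theorem~\ref{thm:SchurConvex} tells us that $H(\alpha):=\sum_{i=1}^d \e^{\alpha_i}$ is strictly Schur-convex on $\R^d$. From $\lambda\succ\delta$ I conclude
\[
\tr[\e^x]=H(\lambda)\geq H(\delta)=\tr[\e^y],
\]
proving the inequality.

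Finally, for the equality case: if $\tr[\e^x]=\tr[\e^y]$, then $H(\lambda)=H(\delta)$; by the strict Schur-convexity of $H$ this forces $\lambda^{\downarrow}=\delta^{\downarrow}$, and by the second part of Theorem~\ref{thm:majorisation} this happens only when $x$ is diagonal. But a diagonal $x$ with the same diagonal entries as $y$ is exactly $y$, so $x=y$. The converse is trivial. There is no real obstacle here; the only thing to be careful about is citing the ``if and only if'' part of Theorem~\ref{thm:majorisation} to get the characterization of equality, rather than just the majorisation itself.
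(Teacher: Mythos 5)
Your proof is correct and follows exactly the same route as the paper: apply Theorem~\ref{thm:majorisation} to get $\lambda\succ\delta$, invoke the strict Schur-convexity of $H(\alpha)=\sum_i\e^{\alpha_i}$ from Theorem~\ref{thm:SchurConvex}, and use the equality case of Theorem~\ref{thm:majorisation} to conclude $x$ is diagonal and hence equal to $y$. Nothing is missing.
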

\begin{proof}
Let $\lambda = (\lambda_1,\dots,\lambda_d)$ be the vector of the eigenvalues of $x$, taken in any order.
Let $\delta=(\delta_1,\dots,\delta_d)$ be the vector of (common) diagonal entries of $x$ and $y$, i.e.\ $\delta_i = x(i,i) = y(i,i)$ for all $1\leq i\leq d$.
Since $y$ is diagonal, notice that the $\delta_i$'s are also its eigenvalues.
Therefore, the claimed inequality $\tr[\e^x] \geq \tr[\e^y]$ reads as $H(\lambda) \geq H(\delta)$, where
\[
H\colon \R^d \to \R \, , \qquad\qquad
H(\alpha) = H(\alpha_1,\dots,\alpha_d) := \sum_{i=1}^d \e^{\alpha_i} \, .
\]
The function $H$ is strictly Schur-convex by Theorem~\ref{thm:SchurConvex}, since the exponential function is strictly convex.
Since $\lambda \succ \delta$ by Theorem~\ref{thm:majorisation}, we then have $H(\lambda) \geq H(\delta)$, as required.
Moreover, assume that $H(\lambda) = H(\delta)$.
Then, by \emph{strict} Schur-convexity of $H$, we have $\lambda^{\downarrow} = \delta^{\downarrow}$.
Again by Theorem~\ref{thm:majorisation}, we conclude that $x$ is diagonal, which in turn implies $x=y$.
\end{proof}

From the latter proposition we deduce the existence and uniqueness of a minimiser of $\chi_d$ on $\Sym_d^V(z)$, under the assumption that all the `fixed' entries $z$ are diagonal matrices.

\begin{theorem}
\label{thm:minimiserChi}
Let $z= (z_v)_{v\in \Gamma} \in \Diag^{\Gamma}_d$ and set $z(i,i) := (z_v(i,i))_{v\in \Gamma} \in \R^{\Gamma}$ for all $1\leq i\leq d$.
Then, the function $\chi_d$ admits a unique minimiser on $\Sym_d^V(z)$.
Such a minimiser is of the form $m=(m_v)_{v\in V} \in \Diag_d^V(z)$, where $m(i,i) := (m_v(i,i))_{v\in V} \in \R^V$ denotes the unique minimiser of $\chi_1$ on $\R^V(z(i,i))$ for all $i$.
\end{theorem}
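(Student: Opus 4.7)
The strategy is to reduce the matrix-valued minimisation to $d$ scalar-valued ones (one per diagonal slot) via Prop.~\ref{prop:traceInequality}, and then read off both existence and uniqueness from Prop.~\ref{prop:minimiser_d=1}. The guiding inequality is
\[
\chi_d(x) = \sum_{v \to w} \tr\bigl[\e^{x_v - x_w}\bigr] \;\geq\; \sum_{v \to w} \sum_{i=1}^d \e^{x_v(i,i) - x_w(i,i)} \;=\; \sum_{i=1}^d \chi_1\bigl(x(i,i)\bigr),
\]
valid for every $x \in \Sym_d^V(z)$, where $x(i,i) := (x_v(i,i))_{v\in V} \in \R^V$. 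The first step follows edge-by-edge from Prop.~\ref{prop:traceInequality} applied to $x_v - x_w$, whose diagonal entries are exactly $x_v(i,i) - x_w(i,i)$.

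Since each $z_v$ is assumed diagonal, the boundary condition $x_v = z_v$ on $\Gamma$ amounts coordinate-wise to $x(i,i) \in \R^V(z(i,i))$ for every $1 \le i \le d$. By Prop.~\ref{prop:minimiser_d=1}, each $\chi_1$ has a unique minimiser $m(i,i)$ on $\R^V(z(i,i))$, giving the lower bound $\chi_d(x) \geq \sum_{i=1}^d \chi_1(m(i,i))$. Defining $m \in \Diag_d^V(z)$ to be the array of diagonal matrices with entries $m_v(i,i)$, this bound is attained: the first inequality becomes an equality because $m_v - m_w \in \Diag_d$, and each $\chi_1(m(i,i))$ achieves its minimum. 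Thus $m$ is a minimiser and $m \in \Diag_d^V$.

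For uniqueness, any minimiser $x$ must saturate both inequalities: (a) equality in Prop.~\ref{prop:traceInequality} for every edge $v \to w$, forcing $x_v - x_w \in \Diag_d$; and (b) $\chi_1(x(i,i)) = \chi_1(m(i,i))$ for each $i$, giving $x(i,i) = m(i,i)$ by the uniqueness half of Prop.~\ref{prop:minimiser_d=1}. To conclude that $x = m$, I still need each $x_v$ individually to be diagonal; this is where the graph structure enters. Fix $v \in V$ and, by Lemma~\ref{lemma:graph_directedPaths}, pick a path $(v_0, v_1, \dots, v_l)$ from $v_0 = v$ to a sink $v_l \in \Gamma$. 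By (a), each $x_{v_{k-1}} - x_{v_k}$ is diagonal, so by telescoping $x_v - x_{v_l} \in \Diag_d$; since $x_{v_l} = z_{v_l} \in \Diag_d$ by hypothesis, $x_v \in \Diag_d$. Combined with (b) this forces $x_v = m_v$.

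The main subtlety is precisely this last propagation: equality in the Golden-Thompson/Schur-convexity inequality only yields diagonality of \emph{differences} $x_v - x_w$, and one needs the presence of a diagonal ``anchor'' in $\Gamma$ reachable along directed paths to upgrade this to diagonality of each $x_v$. This is exactly why the hypothesis that $\Gamma$ contains all sinks and sources (together with the acyclicity of $G$) is essential, and it is the one place where the graph-theoretic assumptions do real work in the proof.
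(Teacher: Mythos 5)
Your proof is correct and follows essentially the same route as the paper's: reduce to the diagonal case via Prop.~\ref{prop:traceInequality} applied edge-by-edge, decompose the diagonal problem into $d$ copies of the scalar problem solved by Prop.~\ref{prop:minimiser_d=1}, and handle uniqueness by tracking the equality cases and propagating diagonality along a directed path to a sink (Lemma~\ref{lemma:graph_directedPaths}). The only difference is organisational — the paper isolates the reduction to $\Diag_d^V(z)$ as a separate claim, whereas you chain the two inequalities directly — and your closing remark correctly identifies the role of the graph-theoretic hypotheses.
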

\begin{proof}
The claim will immediately follow from the two following facts:
\begin{enumerate}
\item
for any $x\in \Sym_d^V(z)$, there exists $y\in \Diag_d^V(z)$ such that $\chi_d(x)\geq \chi_d(y)$, with equality if and only if $x=y$;
\item there exists $m\in \Diag_d^V(z)$ (as in the statement of the theorem) such that $\chi_d(x) \geq \chi_d(m)$ for any $x\in \Diag_d^V(z)$, with equality if and only if $x=m$.
\end{enumerate}

\emph{Proof of (i).} Fix any $x\in \Sym_d^V(z)$.
Define $y=(y_v)_{v\in V}$ so that, for all $v\in V$, $y_v$ is the diagonal matrix with the same diagonal entries as $x$, i.e.\ $y_v(i,i) = x_v(i,i)$ for $1\leq i\leq d$.
Since each $z_v$ (for $v\in \Gamma$) is diagonal by hypothesis, we have $y\in \Diag_d^V(z)$.
For any $v,w\in V$, the matrices $x_v-x_w\in \Sym_d$ and $y_v-y_w\in\Diag_d$ have the same diagonal entries, hence $\tr[\e^{x_v - x_w}]
\geq \tr[\e^{y_v - y_w}]$ by Prop.~\ref{prop:traceInequality}; summing over $v\to w$, we obtain that $\chi_d(x) \geq \chi_d(y)$.
Assume now that $\chi_d(x) = \chi_d(y)$.
Then, $\tr[\e^{x_v - x_w}]
= \tr[\e^{y_v - y_w}]$ whenever $v\to w$.
Again by Prop.~\ref{prop:traceInequality}, we then have $x_v-y_v= x_w - y_w$ for all $v\to w$.
For any $v\in V$, by Lemma~\ref{lemma:graph_directedPaths} there exists a path $(v_0,v_1,\dots,v_l)$ in $G$ from $v_0=v$ to a sink $v_l$.
Since all sinks are in $\Gamma$ by assumption (see \S~\ref{subsec:energyFunctions}) and both $x$ and $y$ are in $\Sym_d^V(z)$, we have $x_{v_l}=z_{v_l}=y_{v_l}$.
Therefore, $x_v - y_v = x_{v_1} - y_{v_1} = \cdots = x_{v_l} - y_{v_l} =0$; in particular, $x_v=y_v$.
As $v\in V$ is arbitrary, we conclude that $x=y$.

\emph{Proof of (ii).} For $x\in \Diag_d^V(z)$, set $x(i,i):=(x_v(i,i))_{v\in V} \in \R^V$.
As each $x_v$ is diagonal, we have
\[
\chi_d(x)
= \sum_{v \to w} \tr[\e^{x_v - x_w}]
= \sum_{v \to w} \sum_{i=1}^d \e^{x_v(i,i) - x_w(i,i)}
=\sum_{i=1}^d \chi_1(x(i,i)) \, .
\]
By Prop.~\ref{prop:minimiser_d=1}, for all $i$, $\chi_1$ has a unique minimiser $m(i,i)$ on $\R^V(z(i,i))$.
Therefore, we have
\[
\chi_d(x)
= \sum_{i=1}^d \chi_1(x(i,i))
\geq \sum_{i=1}^d \chi_1(m(i,i))
= \chi_d(m) \, ,
\]
and the inequality is strict whenever $x\neq m$.
\end{proof}

In the case where the `fixed' entries $z$ are scalar matrices, the inequality~\eqref{eq:goldenThompson} immediately implies the existence and uniqueness of a minimiser of $\phi_d$.

\begin{corollary}
\label{coro:minimiser_phi}
Let $z= (z_v)_{v\in \Gamma} \in \Scal^{\Gamma}_d$, so that $z_v = \zeta_v I_d$ for all $v\in \Gamma$ and for a certain $\zeta = (\zeta_v)_{v\in V} \in \R^{\Gamma}$.
Then, the function $\phi_d$ admits a unique minimiser on $\Sym_d^V(z)$.
Such a minimiser is of the form $m=(m_v)_{v\in V} = (\mu_v I_d)_{v\in V} \in \Scal_d^V(z)$, where $\mu = (\mu_v)_{v\in V} \in \R^V$ is the unique minimiser of $\phi_1$ on $\R^V(\zeta)$.
\end{corollary}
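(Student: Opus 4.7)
The plan is to combine Theorem~\ref{thm:minimiserChi} with the Golden--Thompson inequality~\eqref{eq:goldenThompson}. Since every scalar matrix is in particular diagonal, I would first apply Theorem~\ref{thm:minimiserChi} to $z \in \Scal_d^{\Gamma} \subset \Diag_d^{\Gamma}$: this produces a unique minimiser $m = (m_v)_{v\in V} \in \Diag_d^V(z)$ of $\chi_d$ on $\Sym_d^V(z)$, whose coordinate-$i$ diagonals $m(i,i) \in \R^V$ are the unique minimisers of $\chi_1 = \phi_1$ on $\R^V(z(i,i))$. The scalar assumption $z_v = \zeta_v I_d$ means that $z(i,i) = \zeta$ is the \emph{same} element of $\R^{\Gamma}$ for every $i$, so by uniqueness (Prop.~\ref{prop:minimiser_d=1}) all the $m(i,i)$ coincide with the unique minimiser $\mu$ of $\phi_1$ on $\R^V(\zeta)$. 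Hence $m_v = \mu_v I_d$ is a scalar matrix for each $v$, i.e.\ $m \in \Scal_d^V(z)$.

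The key observation I would then record is that $\phi_d(m) = \chi_d(m)$. Indeed, the entries of $m$ are all scalar and in particular pairwise commuting, so $\e^{m_v}\e^{-m_w} = \e^{m_v - m_w}$ for every edge $v \to w$, and Golden--Thompson becomes an equality at $m$. Combining this with~\eqref{eq:goldenThompson} gives, for an arbitrary $x \in \Sym_d^V(z)$, the sandwich
\[
\phi_d(x) \;\geq\; \chi_d(x) \;\geq\; \chi_d(m) \;=\; \phi_d(m),
\]
which shows that $m$ is a global minimiser of $\phi_d$ on $\Sym_d^V(z)$.

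Uniqueness is essentially automatic from the same sandwich: if $\phi_d(x) = \phi_d(m)$ for some $x \in \Sym_d^V(z)$, then $\chi_d(x) \leq \phi_d(x) = \phi_d(m) = \chi_d(m)$, while $\chi_d(x) \geq \chi_d(m)$ since $m$ minimises $\chi_d$; thus $\chi_d(x) = \chi_d(m)$, and the uniqueness clause of Theorem~\ref{thm:minimiserChi} forces $x = m$. I do not expect any genuine obstacle here, because all the hard analysis (the coercivity of Prop.~\ref{prop:phiAtInfinity}, the $d=1$ convexity of Lemma~\ref{lemma:convexityPhi_1}, and the Schur-convexity reduction to the diagonal case in Prop.~\ref{prop:traceInequality} and Theorem~\ref{thm:minimiserChi}) has already been carried out; the only fresh ingredient is the trivial but crucial remark that Golden--Thompson is tight on commuting (hence scalar) matrices, which is exactly what allows the minimisers of $\chi_d$ and $\phi_d$ to coincide under the scalar boundary data.
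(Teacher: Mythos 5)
Your proposal is correct and follows essentially the same route as the paper: reduce to Theorem~\ref{thm:minimiserChi} via $\Scal_d^\Gamma\subseteq\Diag_d^\Gamma$, note that Golden--Thompson is an equality at the (necessarily scalar) minimiser $m$ of $\chi_d$, and conclude with the sandwich $\phi_d(x)\geq\chi_d(x)\geq\chi_d(m)=\phi_d(m)$ together with the strictness of the middle inequality for $x\neq m$. Your explicit remark that $z(i,i)=\zeta$ for every $i$ forces all the diagonal slices $m(i,i)$ to coincide with $\mu$ is a useful spelling-out of why $m$ is scalar, which the paper leaves implicit.
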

\begin{proof}
Since $\Scal_d^{\Gamma} \subseteq \Diag_d^{\Gamma}$, it follows from Theorem~\ref{thm:minimiserChi} that $\chi_d$ has a unique minimiser $m$ on $\Sym_d^V(z)$, which is of the form specified above.
Since the scalar matrices $m_v$ and $m_w$ commute for any $v,w\in V$, we have $\e^{m_v} \e^{-m_w} = \e^{m_v-m_w}$, hence $\phi_d(m) = \chi_d(m)$.
By~\eqref{eq:goldenThompson}, we then have
\[
\phi_d(x) \geq \chi_d(x) \geq \chi_d(m) = \phi_d(m)
\qquad\qquad
\text{for all } x\in \Sym_d^V(z) \, ,
\]
where the second inequality is strict if $x\neq m$.
It follows that $m$ is also the unique minimiser of $\phi_d$ on $\Sym_d^V(z)$.
\end{proof}

\subsection{Energy functions in logarithmic variables}
\label{subsec:logarithmic variables}

It is a well-known fact that the functions
\begin{align*}
\Sym_d \to \pos_d \, , \quad a \mapsto \e^{a} \qquad\qquad\text{and}\qquad\qquad
\pos_d \to \Sym_d \, , \quad a \mapsto \log a \, ,
\end{align*}
namely the matrix exponential and the matrix logarithm, are both bijections on the stated domains and inverse to each other.
From now on, for any set $S$, we will use the following compact notations: $\log x := (\log x_v)_{v\in S} \in \Sym_d^S$ for $x=(x_v)_{v\in S}\in \pos_d^S$, and $\e^{x} := (\e^{x_v})_{v\in S} \in \pos_d^S$ for $x=(x_v)_{v\in S}\in \Sym_d^S$.

Let us consider the analogue of $\phi_d$ `in logarithmic variables', that is the energy function $\Phi_d(x) := \phi_d(\log x)$ for $x\in \pos_d^V$.
More explicitly, recalling~\eqref{eq:energyFn_phi}, we define
\begin{align}
\label{eq:energyFn_Phi}
\Phi_d \colon \pos_d^V \to \R \, ,
\qquad
\Phi_d(x) := \sum_{\substack{v,w \in V \colon \\ v \to w}} \tr[x_v x_w^{-1}]
\qquad \text{for all } x= (x_v)_{v\in V} \in \pos_d^V \, .
\end{align}

Take now $z = (z_v)_{v\in \Gamma}$ such that each $z_v$ is a positive multiple of $I_d$, or equivalently $\log z \in \Scal^{\Gamma}_d$.
By Corollary~\ref{coro:minimiser_phi}, $\Phi_d$ has a unique minimiser $m$ on $\pos_d^V(z)$, where $\log m$ is the unique minimiser of $\phi_d$ on $\Sym_d^V(\log z)$.
This implies that, on $\pos_d^V(z)$, the Hessian of $\Phi_d$ at $m$ is positive semidefinite.
We now aim to prove the stronger statement that the Hessian of $\Phi_d$ at $m$ is \emph{positive definite}.

As in the previous subsections, we first work with $d=1$.
Recall that, by Lemma~\ref{lemma:convexityPhi_1}, $\phi_1$ is strictly convex on $\R^V(\log z)$, for $z \in \pos_1^{\Gamma}$.
The analogous statement does not hold for $\Phi_1$ on $\pos_1^V(z)$; however, the following is still true:

\begin{lemma}
\label{lemma:hessPhi_d=1}
For $z\in \pos_1^{\Gamma}$, the Hessian of $\Phi_1$ on $\pos_1^V(z)$ is positive definite at any critical point.
\end{lemma}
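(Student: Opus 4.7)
The plan is to reduce the statement to the strict convexity of $\phi_1$ already established in Lemma~\ref{lemma:convexityPhi_1}, via the coordinatewise logarithm $\log\colon \pos_1^V \to \R^V$, $x \mapsto (\log x_v)_{v\in V}$. By definition, $\Phi_1(x) = \phi_1(\log x)$ for all $x\in\pos_1^V$. Since $\log$ restricts to a diffeomorphism from $\pos_1^V(z)$ onto $\R^V(\log z)$ (with the natural shorthand $\log z := (\log z_v)_{v\in\Gamma}$), whose Jacobian is everywhere diagonal and invertible, the critical points of $\Phi_1$ on $\pos_1^V(z)$ correspond bijectively, under $\log$, to those of $\phi_1$ on $\R^V(\log z)$.

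Next, I would apply the standard chain rule for Hessians at critical points. A direct differentiation of $\Phi_1 = \phi_1\circ \log$ yields, for any $v,w\in\Gamma^{\mathsf{c}}$ and any $x\in\pos_1^V(z)$,
\[
\partial_w \partial_v \Phi_1(x) \;=\; \frac{1}{x_v x_w}\,(\partial_w \partial_v \phi_1)(\log x) \;-\; \delta_{vw}\,\frac{1}{x_v^2}\,(\partial_v \phi_1)(\log x).
\]
At a critical point $x_0$ of $\Phi_1$ on $\pos_1^V(z)$ the associated point $y_0 := \log x_0$ is a critical point of $\phi_1$ on $\R^V(\log z)$, so every first-order term $(\partial_v \phi_1)(y_0)$ vanishes. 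Therefore, as a bilinear form on $\R^{\Gamma^{\mathsf{c}}}$, the Hessian of $\Phi_1$ at $x_0$ factors as $D\,H_{\phi_1}(y_0)\,D$, where $D$ is the diagonal matrix whose $(v,v)$-entry is the positive scalar $1/(x_0)_v$ for $v\in\Gamma^{\mathsf{c}}$.

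Finally, Lemma~\ref{lemma:convexityPhi_1} guarantees that $H_{\phi_1}(y_0)$ is positive definite; since $D$ is invertible, congruence preserves positive-definiteness, and thus $H_{\Phi_1}(x_0) = D\,H_{\phi_1}(y_0)\,D$ is positive definite, as claimed. There is no genuine obstacle: the argument is essentially chain-rule bookkeeping combined with the already-established strict convexity of $\phi_1$. The only point deserving attention is ensuring that the contribution from the second derivatives of $\log$ (the $\delta_{vw}/x_v^2$ term above) is killed by the vanishing of the gradient of $\phi_1$ at $y_0$ — which is precisely why the conclusion holds \emph{at critical points}, even though $\Phi_1$ itself need not be convex on $\pos_1^V(z)$.
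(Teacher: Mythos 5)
Your proof is correct and follows essentially the same route as the paper: both compute the second derivatives of $\Phi_1=\phi_1\circ\log$ via the chain rule, observe that the extra term proportional to $\partial_v\phi_1(\log x)$ vanishes at a critical point, and conclude by noting that the resulting Hessian is a congruence $D\,H_{\phi_1}(\log x)\,D$ of the everywhere positive definite Hessian of $\phi_1$ from Lemma~\ref{lemma:convexityPhi_1} by the invertible diagonal matrix $D=\mathrm{diag}(1/x_v)_{v\in\Gamma^{\mathsf{c}}}$.
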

\begin{proof}
We prove the claim by simply expressing the derivatives of $\Phi_1$ in terms of the derivatives of $\phi_1$.
For $v\in \Gamma^{\mathsf{c}}$, the first partial derivative of $\Phi_1$ w.r.t.\ $x_v$ is
\begin{equation}
\label{eq:derivativesPhi&phi}
\partial_v \Phi_1 (x)
= \frac{1}{x_v} \partial_v \phi_1 (\log x) \qquad\qquad
\text{for any } x\in \pos_1^V(z) \, .
\end{equation}
Therefore, for $v,w\in \Gamma^{\mathsf{c}}$,
\[
\partial_v \partial_w \Phi_1 (x) =
\begin{cases}
\displaystyle
\frac{1}{x_v^2} \left[ \partial^2_v \phi_1 - \partial_v \phi_1 \right] (\log x) &\text{if } v=w \, , \\
\displaystyle
\frac{1}{x_v x_w} \partial_v \partial_w \phi_1 (\log x) &\text{if } v\neq w \, .
\end{cases}
\]
Assume now that $x$ is a critical point of $\Phi_1$ on $\pos_1^V(z)$, i.e.\ that both sides of~\eqref{eq:derivativesPhi&phi} vanish for all $v\in \Gamma^{\mathsf{c}}$.
Then, we have
\[
\partial_v \partial_w \Phi_1 (x)
= \frac{1}{x_v x_w} \partial_v \partial_w \phi_1 (\log x)
\qquad\qquad
\text{for all } v,w\in \Gamma^{\mathsf{c}} \, .
\]
As the Hessian of $\phi_1$ on $\R^V(\log z)$ is positive definite everywhere by Lemma~\ref{lemma:convexityPhi_1}, it follows that the Hessian of $\Phi_1$ on $\pos_1^V(z)$ is positive definite at $x$.
\end{proof}

To compute the Hessian in the general case $d\geq 1$, we will use the following basic formulas (see e.g.~\cite{matrixCookbook12}) that hold for any $a,b\in\pos_d$, $1\leq i\leq j\leq d$, and $1\leq k\leq \ell\leq d$:
\begin{align}
\label{eq:inverseDerivative}
\frac{\partial (a^{-1})(k,\ell)}{\partial a(i,j)}
&= -\frac{1}{1+\delta(i,j)} \left[ a^{-1}(k,i) a^{-1}(\ell,j) + a^{-1}(k,j) a^{-1}(\ell,i) \right] \, , \\
\label{eq:inverseDerivative2}
\frac{\partial (a^{-1}ba^{-1})(k,\ell)}{\partial a(i,j)}
&= -\frac{a^{-1}(k,i) (a^{-1}b a^{-1})(\ell,j) + (a^{-1}b a^{-1})(k,i) a^{-1}(\ell,j) + \boxed{i \leftrightarrow j}}{1+\delta(i,j)} \\
\label{eq:traceDer1}
\frac{\partial }{\partial a(i,j)} \tr[ab^{-1}]
&= \frac{2}{1+\delta(i,j)} b^{-1}(i,j) \, , \\
\label{eq:traceDer2}
\frac{\partial }{\partial b(i,j)} \tr[ab^{-1}]
&= -\frac{2}{1+\delta(i,j)} \left[b^{-1} a b^{-1}\right](i,j) \, ,
\end{align}
where $\delta(i,j)$ is $1$ if $i=j$ and $0$ otherwise.
In~\eqref{eq:inverseDerivative2}, $\boxed{i \leftrightarrow j}$ denotes the preceding expression with the indices $i$ and $j$ swapped.
Notice that~\eqref{eq:inverseDerivative2} and~\eqref{eq:traceDer2} can be deduced from~\eqref{eq:inverseDerivative}.

Let $\Scal_d^+$ be the set of positive definite scalar matrices, i.e.\ positive multiples of $I_d$.

\begin{lemma}
\label{lemma:hessPhi}
Let $z\in (\Scal_d^+)^{\Gamma}$.
Then, the Hessian of $\Phi_d$ on $\pos_d^V(z)$ is positive definite at any critical point $x$ such that $x \in (\Scal_d^+)^{V}(z)$.
\end{lemma}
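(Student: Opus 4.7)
The plan is to compute the Hessian of $\Phi_d$ at the scalar critical point $m=(\mu_v I_d)_{v\in V}$ by a second-order Taylor expansion, and then to exploit the fact that at a \emph{scalar} point the resulting matrix-valued quadratic form decouples along any orthonormal basis of $\Sym_d$; this reduces the problem to the positive definiteness already established in Lemma~\ref{lemma:hessPhi_d=1}. As a preliminary, I would write $z_v=\zeta_v I_d$ for $v\in\Gamma$ and verify that $\mu=(\mu_v)_{v\in V}\in\pos_1^V(\zeta)$ is itself a critical point of $\Phi_1$: using~\eqref{eq:traceDer1}--\eqref{eq:traceDer2}, a short computation shows that $\partial_{x_v(i,j)}\Phi_d$ evaluated at any scalar array $(\xi_v I_d)_v$ is a scalar multiple of $\delta(i,j)\,\partial_{\xi_v}\Phi_1(\xi)$, so the vanishing of these partial derivatives for $v\in\Gamma^{\mathsf{c}}$ is equivalent to the critical-point equations for $\Phi_1$ at $\mu$.

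Next, I would take $a=(a_v)_{v\in V}\in\Sym_d^V$ with $a_v=0$ for $v\in\Gamma$ and expand $\Phi_d(m+\epsilon a)$ to order $\epsilon^2$ using $(x_w+\epsilon a_w)^{-1}=x_w^{-1}-\epsilon\, x_w^{-1}a_w x_w^{-1}+\epsilon^2\, x_w^{-1}a_w x_w^{-1}a_w x_w^{-1}+O(\epsilon^3)$. Each edge $v\to w$ contributes an $\epsilon^2$-coefficient equal to $\tr[x_v x_w^{-1} a_w x_w^{-1} a_w x_w^{-1}]-\tr[a_v x_w^{-1}a_w x_w^{-1}]$, which at the scalar point $m$ collapses to $\mu_v\mu_w^{-3}\tr[a_w^2]-\mu_w^{-2}\tr[a_v a_w]$, giving the Hessian quadratic form
\[
H[a]\;=\;2\sum_{v\to w}\left(\frac{\mu_v}{\mu_w^3}\,\tr[a_w^2]-\frac{1}{\mu_w^2}\,\tr[a_v a_w]\right).
\]

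To conclude, I fix an orthonormal basis $\{B_1,\dots,B_D\}$ of $\Sym_d$ with respect to $\langle A,B\rangle:=\tr[AB]$ (where $D=d(d+1)/2$) and write $a_v=\sum_{r=1}^{D} a_v^{(r)}B_r$ with scalar coordinates $a_v^{(r)}:=\tr[a_v B_r]$. Then $\tr[a_v a_w]=\sum_r a_v^{(r)}a_w^{(r)}$ and $\tr[a_v^2]=\sum_r (a_v^{(r)})^2$, so substitution yields
\[
H[a]\;=\;\sum_{r=1}^{D}\,2\sum_{v\to w}\!\left(\frac{\mu_v}{\mu_w^3}(a_w^{(r)})^2-\frac{a_v^{(r)}a_w^{(r)}}{\mu_w^2}\right),
\]
in which the inner double sum is precisely the Hessian of $\Phi_1$ at the critical point $\mu$, applied to the vector $a^{(r)}=(a_v^{(r)})_{v\in V}$. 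Since each $a^{(r)}$ vanishes on $\Gamma$, Lemma~\ref{lemma:hessPhi_d=1} ensures that every summand is non-negative, with equality only when $a^{(r)}=0$; hence $H[a]=0$ forces $a=0$, and $H$ is positive definite. The conceptual crux, and the reason the scalar hypothesis on $x$ is indispensable, is that at a scalar point every coefficient in $H[a]$ is a pure scalar multiple of $\tr[a_w^2]$ or $\tr[a_v a_w]$; this is what permits the Hessian to split into $D$ independent copies of the scalar Hessian, an argument that would not survive for a non-scalar critical point.
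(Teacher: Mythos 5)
Your proof is correct, and its core reduction is the same as the paper's: at a scalar critical point the $d$\nobreakdash-dimensional Hessian decouples into $d(d+1)/2$ independent copies of the Hessian of $\Phi_1$ at the scalar critical point $\mu$ of $\Phi_1$ on $\pos_1^V(\zeta)$, whose positive definiteness is Lemma~\ref{lemma:hessPhi_d=1}. The difference is in how the decoupling is exhibited. The paper computes the second partial derivatives entry by entry via the formulas~\eqref{eq:inverseDerivative}--\eqref{eq:traceDer2}, uses the critical point equations~\eqref{eq:critPointsEqns} to simplify the diagonal blocks, and identifies the Hessian matrix as a Kronecker product of a positive diagonal matrix (indexed by entry pairs $(i,j)$) with the scalar Hessian $\big(\partial_v\partial_w\Phi_1(\xi)\big)_{v,w}$. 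You instead expand the quadratic form directly by a second-order Taylor expansion of $x\mapsto\tr[x_vx_w^{-1}]$ and split the perturbation along an orthonormal basis of $\Sym_d$ for the trace inner product, so that $H[a]=\sum_r Q_\mu[a^{(r)}]$ with $Q_\mu$ the scalar quadratic form; criticality of the point enters only through the applicability of Lemma~\ref{lemma:hessPhi_d=1} (and your preliminary check that a scalar array is critical for $\Phi_d$ iff its scalar part is critical for $\Phi_1$ is the same observation the paper makes). Your version is coordinate-free and avoids the $1+\delta(i,j)$ bookkeeping, at the price of not displaying the explicit Kronecker-product structure of the Hessian matrix; both arguments hinge equally on the scalarity of the critical point, without which neither factorisation survives.
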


\begin{proof}
We will prove that, under the stated assumptions, the Hessian of $\Phi_d$ (for $d\geq 1$) can be expressed in terms of the Hessian of $\Phi_1$; the claim will then follow from Lemma~\ref{lemma:hessPhi_d=1}.

For ease of notation, given any $v\in V$ and $1\leq i\leq j\leq d$, we will denote by $\partial_{v;i,j}$ the partial derivative of a function of $x\in\pos_d^V$ with respect to the real variable $x_v(i,j)$.

It follows from the definition~\eqref{eq:energyFn_Phi} and from the formulas~\eqref{eq:traceDer1}-\eqref{eq:traceDer2} that
\[
\partial_{v;i,j} \Phi_d(x)
= \frac{2}{1+\delta(i,j)} \left( \sum_{\substack{w\in V\colon \\ v\to w}} x_w^{-1}(i,j) - \sum_{\substack{u\in V\colon \\ u\to v}} \left[x_v^{-1} x_u x_v^{-1}\right](i,j) \right)
\]
for $v\in \Gamma^{\mathsf{c}}$ and $1\leq i\leq j\leq d$.
The critical point equations of $\Phi_d$ on $\pos_d^V(z)$ are then
\begin{equation}
\label{eq:critPointsEqns}
x_v^{-1} \Bigg(\sum_{\substack{u\in V\colon \\ u\to v}} x_u\Bigg) x_v^{-1} = \sum_{\substack{w\in V \colon \\ v\to w}} x_w^{-1}
\qquad\qquad
\text{for all } v\in \Gamma^{\mathsf{c}} \, .
\end{equation}

We will now compute the second derivatives at any critical point $x=(x_v)_{v\in V} \in \pos_d^V(z)$.
Using~\eqref{eq:inverseDerivative2} and~\eqref{eq:critPointsEqns}, we have
\[
\begin{split}
& \partial_{v;k,\ell} \partial_{v;i,j} \Phi_d (x)
= \frac{2}{1+\delta(i,j)}
\frac{1}{1+\delta(k,\ell)} \Bigg( x_v^{-1}(i,k) \Bigg[ x_v^{-1} \Bigg( \sum_{\substack{u\in V\colon \\ u\to v}} x_u\Bigg) x_v^{-1} \Bigg](j,\ell) \\
&\qquad\qquad\qquad\qquad\qquad\qquad\qquad\qquad\quad + \Bigg[ x_v^{-1} \Bigg( \sum_{\substack{u\in V\colon \\ u\to v}} x_u\Bigg) x_v^{-1} \Bigg](i,k) x_v^{-1}(j,\ell) + \boxed{k \leftrightarrow \ell} \Bigg) \\
&= \frac{2}{1+\delta(i,j)}
\frac{1}{1+\delta(k,\ell)} \Bigg( x_v^{-1}(i,k) \sum_{\substack{u\in V \colon \\ v\to u}} x_u^{-1}(j,\ell) + \sum_{\substack{u\in V \colon \\ v\to u}} x_u^{-1}(i,k) x_v^{-1}(j,\ell) + \boxed{k \leftrightarrow \ell} \Bigg)
\end{split}
\]
for $v\in \Gamma^{\mathsf{c}}$, $1\leq i\leq j\leq d$, and $1\leq k\leq \ell\leq d$.
Recall now that the acyclic structure of the underlying graph guarantees that, if $v\to w$, then $w\not\to v$.
Therefore, for $v,w\in \Gamma^{\mathsf{c}}$ such that $v\to w$, $1\leq i\leq j\leq d$, and $1\leq k\leq \ell\leq d$, we have
\[
\partial_{w;k,\ell} \partial_{v;i,j} \Phi_d (x)
= - \frac{2}{1+\delta(i,j)}
\frac{1}{1+\delta(k,\ell)}
\left[ x_w^{-1}(i,k) x_w^{-1}(j,\ell) + x_w^{-1}(i,\ell) x_w^{-1}(j,k) \right] \, .
\]
On the other hand, the second derivative w.r.t.\ $x_v(i,j)$ and $x_w(k,\ell)$ vanishes for all $v,w\in \Gamma^{\mathsf{c}}$ such that $v\not\to w$ and $w\not\to v$.

According to the hypotheses of the theorem, we further assume from now on that there exists $\zeta = (\zeta_v)_{v\in V} \in \pos_1^{\Gamma}$ such that $z_v = \zeta_v I_d$ for all $v\in \Gamma$, and there exists $\xi = (\xi_v)_{v\in V} \in \pos_1^V$ such that $x_v = \xi_v I_d$ for all $v\in V$.
Using the identity
\[
\frac{\delta(i,k) \delta(j,\ell) + \delta(i,\ell) \delta(j,k)}{1+\delta(k,\ell)}
= \delta((i,j),(k,\ell))
\qquad
\text{for } 1\leq i\leq j\leq d \, , \,\, 1\leq k\leq \ell\leq d \, ,
\]
we see that the second derivatives at $x$ factorise as
\[
\partial_{w;k,\ell} \partial_{v;i,j} \Phi_d (x)
= f((i,j),(k,\ell)) \, g_{\xi}(v,w) \, ,
\qquad\text{with}\qquad
f((i,j),(k,\ell)) = \frac{2 \delta((i,j),(k,\ell))}{1+\delta(i,j)}  \, .
\]
Here, $g_{\xi}(v,w)$ is an explicit function of $\xi$, $v$ and $w$; we stress that it is the same function for all $d\geq 1$.
It follows from~\eqref{eq:critPointsEqns} that, since $x=(\xi_v I_d)_{v\in V}$ is a critical point of $\Phi_d$ on $\pos_d^V(z)$, $\xi$ is a critical point of $\Phi_1$ on $\pos_1^V(\zeta)$.
Therefore, the matrix
\[
g_{\xi}(v,w) = \partial_v \partial_w \Phi_1(\xi)
\]
(with `row index' $v$ and `column index' $w$), which is the Hessian matrix of $\Phi_1$ on $\pos_1^V(\zeta)$ at $\xi$, is positive definite by Lemma~\ref{lemma:hessPhi_d=1}.
On the other hand, the matrix $f((i,j),(k,\ell))$ (with `row index' $(i,j)$ and `column index' $(k,\ell)$) is clearly positive definite as a diagonal matrix with positive diagonal entries.
Therefore, the Hessian of $\Phi_d$ on $\pos_d^V(z)$ at $x$ is positive definite, as it can be written as a Kronecker product of two positive definite matrices.
\end{proof}

As any minimiser is a critical point, the main result of this section follows immediately from Corollary~\ref{coro:minimiser_phi} and Lemma~\ref{lemma:hessPhi}.

\begin{theorem}
\label{thm:minimiser&Hessian}
Let $z\in (\Scal_d^+)^{\Gamma}$.
Then, the function $\Phi_d$ on $\pos_d^{V}(z)$ has a unique (global) minimiser $m$, at which the Hessian is positive definite.
Moreover, we have $m\in (\Scal_d^+)^{V}(z)$.
\end{theorem}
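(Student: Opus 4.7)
The plan is to derive the theorem essentially as a direct synthesis of Corollary~\ref{coro:minimiser_phi} and Lemma~\ref{lemma:hessPhi}, exploiting the bijective correspondence between $\pos_d$ and $\Sym_d$ given by the matrix logarithm. First I would observe that, since $z\in(\Scal_d^+)^\Gamma$, the componentwise logarithm $\log z = (\log z_v)_{v\in\Gamma}$ lies in $\Scal_d^\Gamma$, and that the map $x\mapsto \log x$ is a homeomorphism between $\pos_d^V(z)$ and $\Sym_d^V(\log z)$. By construction $\Phi_d(x)=\phi_d(\log x)$ for all $x\in\pos_d^V$, so minimisers of $\Phi_d$ on $\pos_d^V(z)$ correspond exactly, via this bijection, to minimisers of $\phi_d$ on $\Sym_d^V(\log z)$. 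Corollary~\ref{coro:minimiser_phi} yields a unique global minimiser of $\phi_d$ on $\Sym_d^V(\log z)$, and this minimiser lies in $\Scal_d^V(\log z)$; applying the matrix exponential componentwise, one obtains a unique global minimiser $m$ of $\Phi_d$ on $\pos_d^V(z)$, with $m\in(\Scal_d^+)^V(z)$.

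Next I would address the Hessian statement. Since $m$ is a minimiser of $\Phi_d$ on the (relatively) open set $\pos_d^V(z)$ (open as a constrained submanifold inside the ambient symmetric-matrix space), it is automatically a critical point. Combined with $m\in(\Scal_d^+)^V(z)$ established above, this places $m$ in exactly the scope of Lemma~\ref{lemma:hessPhi}, which asserts that the Hessian of $\Phi_d$ at such a critical point is positive definite. This finishes the proof.

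I do not anticipate any genuine obstacle here: the real work has already been carried out in Corollary~\ref{coro:minimiser_phi} (existence, uniqueness and scalar structure on $\Sym_d$) and in Lemma~\ref{lemma:hessPhi} (reducing the multi-variate Hessian at a scalar critical point to a Kronecker product of two positive definite matrices, via Lemma~\ref{lemma:hessPhi_d=1}). The only thing one must be slightly careful about is that passing from $\phi_d$ to $\Phi_d$ by the change of variables $x\mapsto\log x$ preserves the minimiser and that the constraint set $\Gamma$ is correctly transported, but this is routine since $\log$ and $\exp$ act componentwise on arrays. No additional computation beyond this bookkeeping is required.
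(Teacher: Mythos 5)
Your proposal is correct and follows essentially the same route as the paper: the paper likewise transfers Corollary~\ref{coro:minimiser_phi} to $\Phi_d$ via the logarithmic change of variables (done in the paragraph introducing \S~\ref{subsec:logarithmic variables}) and then concludes positive definiteness of the Hessian by noting that the minimiser is a scalar critical point falling under Lemma~\ref{lemma:hessPhi}. No gaps.
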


\subsection{Laplace approximation}
\label{subsec:LaplaceAsymptotics}

We will now use Theorem~\ref{thm:minimiser&Hessian} to study the asymptotic behaviour of integrals of exponentials of $\Phi_d$, via Laplace's approximation method.
Recall the definition~\eqref{eq:measureMu} of the measure $\mu$ on $\pos_d$.
\begin{proposition}
\label{prop:saddlePoint}
Let $z\in (\Scal_d^+)^{\Gamma}$ and let $m$ be the unique global minimiser of $\Phi_d$ on $\pos_d^{V}(z)$ (see Theorem~\ref{thm:minimiser&Hessian}).
Let $g\colon \pos_d^V(z) \to\C$ be a continuous function in a neighbourhood of $m$, with $g(m)\neq 0$, and such that
\[
\int_{\pos_d^{V}(z)}
\left(\prod_{v\in \Gamma^{\mathsf{c}}} \mu(\diff x_v)\right)
\abs{g(x)} \e^{-k \Phi_d(x)} < \infty
\qquad\quad
\text{for some } k>0 \, .
\]
Then
\begin{equation}
\label{eq:saddlePoint}
\begin{split}
&\int_{\pos_d^V(z)} 
\left(\prod_{v\in \Gamma^{\mathsf{c}}} \mu(\diff x_v)\right)
g(x)
\e^{-k \Phi_d(x)} \\
&\widesim[2.5]{k\to\infty}
\frac{g(m)}{\sqrt{\abs{\mathcal{H}(m)}}}
\left(\prod_{v\in \Gamma^{\mathsf{c}}} \abs{m_v}^{-\frac{d(d+1)}{2}}\right)
\left( \frac{2\pi}{k}\right)^{\lvert\Gamma^{\mathsf{c}}\rvert \frac{d(d+1)}{4}} \e^{-k \Phi_d(m)}
\, ,
\end{split}
\end{equation}
where $\abs{\mathcal{H}(m)}>0$ is the Hessian determinant of $\Phi_d$ at $m$ and $\big\lvert\Gamma^{\mathsf{c}}\big\rvert$ is the number of vertices in $\Gamma^{\mathsf{c}}$.
\end{proposition}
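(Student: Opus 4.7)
The plan is to execute a standard Laplace approximation, whose key inputs are Theorem~\ref{thm:minimiser&Hessian} (existence and uniqueness of the minimiser $m$, together with positive definiteness of the Hessian there) and Proposition~\ref{prop:phiAtInfinity} (growth of the energy function at infinity, transported from $\phi_d$ to $\Phi_d$ via the matrix logarithm bijection between $\pos_d$ and $\Sym_d$). Set $n := \lvert\Gamma^{\mathsf{c}}\rvert$ and let $D := n\,d(d+1)/2$ be the total number of real integration variables, since each symmetric matrix $x_v$ has $d(d+1)/2$ independent entries. This $D$ is precisely what produces the exponent of $2\pi/k$ in~\eqref{eq:saddlePoint}.

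First, I would localise the integral near $m$. Using continuity of $\Phi_d$, uniqueness of $m$ as a global minimiser, and growth at infinity, for every sufficiently small $\epsilon>0$ there exists $\delta_{\epsilon}>0$ such that $\inf_{x\notin B_{\epsilon}(m)} \Phi_d(x) \ge \Phi_d(m) + \delta_{\epsilon}$, where $B_{\epsilon}(m)\subset\pos_d^V(z)$ denotes a small Euclidean ball (for $\epsilon$ small, $B_{\epsilon}(m)\subset\pos_d^V(z)$ since $\pos_d$ is open in $\Sym_d$). Splitting the exponential as $\e^{-k\Phi_d(x)} = \e^{-k_0\Phi_d(x)}\e^{-(k-k_0)\Phi_d(x)}$ with $k_0$ from the integrability hypothesis, one has
\begin{equation*}
\int_{B_{\epsilon}(m)^{\mathsf{c}}} \lvert g(x)\rvert\,\e^{-k\Phi_d(x)} \prod_{v\in\Gamma^{\mathsf{c}}}\mu(\diff x_v)
\le \e^{-(k-k_0)(\Phi_d(m)+\delta_{\epsilon})} \int \lvert g\rvert\,\e^{-k_0\Phi_d}\,\mu^{\otimes\Gamma^{\mathsf{c}}} ,
\end{equation*}
which is exponentially smaller than the conjectured main term $\e^{-k\Phi_d(m)}/k^{D/2}$ and therefore negligible.

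Next, on $B_{\epsilon}(m)$ I would Taylor expand. Since $m$ is a critical point of $\Phi_d$, in the entrywise coordinates $y = x-m$ we have
\begin{equation*}
\Phi_d(m+y) = \Phi_d(m) + \tfrac{1}{2}\langle y,\mathcal{H}(m) y\rangle + R(y),\qquad \lvert R(y)\rvert \le C\lVert y\rVert^3,
\end{equation*}
where $\mathcal{H}(m)$ is the $D\times D$ Hessian matrix. Rescale $u := \sqrt{k}\,y$, so that $B_{\epsilon}(m)$ becomes $B_{\epsilon\sqrt{k}}(0)$ (expanding to all of $\R^D$), and $k\Phi_d(m+u/\sqrt{k}) = k\Phi_d(m) + \tfrac{1}{2}\langle u,\mathcal{H}(m) u\rangle + O(\lVert u\rVert^3/\sqrt{k})$. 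The Jacobian contributes $k^{-D/2}$; the density $\prod_{v\in\Gamma^{\mathsf{c}}}\lvert x_v\rvert^{-(d+1)/2}$ converges uniformly on $B_{\epsilon}(m)$ to $\prod_v \lvert m_v\rvert^{-(d+1)/2}$ as $\epsilon\to0$; and $g(x)\to g(m)$ by continuity. Choosing $\epsilon$ small enough that $\tfrac{1}{2}\langle y,\mathcal{H}(m) y\rangle + R(y) \ge \tfrac{1}{4}\langle y,\mathcal{H}(m) y\rangle$ on $B_{\epsilon}(m)$ (possible thanks to positive definiteness of $\mathcal{H}(m)$ and the cubic bound on $R$), one gets the Gaussian dominating function $\e^{-\frac{1}{4}\langle u,\mathcal{H}(m) u\rangle}$, and dominated convergence yields
\begin{equation*}
\int_{B_{\epsilon}(m)} g(x)\,\e^{-k\Phi_d(x)}\,\mu^{\otimes\Gamma^{\mathsf{c}}}(\diff x)
\widesim[2.5]{k\to\infty} \frac{g(m)\,\e^{-k\Phi_d(m)}}{k^{D/2}} \prod_{v\in\Gamma^{\mathsf{c}}}\lvert m_v\rvert^{-(d+1)/2}\int_{\R^D} \e^{-\frac{1}{2}\langle u,\mathcal{H}(m) u\rangle}\,\diff u,
\end{equation*}
and the Gaussian integral evaluates to $(2\pi)^{D/2}/\sqrt{\lvert\mathcal{H}(m)\rvert}$, matching~\eqref{eq:saddlePoint}.

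The main obstacle is the interplay between the tail bound and the uniform control of the quadratic approximation: one must verify that the neighbourhood $B_{\epsilon}(m)$ can be chosen small enough so that the Hessian controls the cubic remainder, yet large enough (after rescaling) to capture the full Gaussian integral in the limit. Both Theorem~\ref{thm:minimiser&Hessian} (positive definiteness of $\mathcal{H}(m)$) and Proposition~\ref{prop:phiAtInfinity} (growth at infinity) are indispensable here; the remaining steps are then routine. The hypothesis $g(m)\neq0$ merely ensures that the stated asymptotic equivalence $\sim$ is meaningful, since otherwise one would need a subleading analysis of $g$ near $m$.
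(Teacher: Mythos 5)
Your argument is correct and is essentially the paper's: the paper verifies the same three ingredients (uniqueness of the minimiser $m$ and positive definiteness of $\mathcal{H}(m)$ from Theorem~\ref{thm:minimiser&Hessian}, the coercivity condition~\eqref{eq:minimiser_strong} from Prop.~\ref{prop:phiAtInfinity} transported through the matrix logarithm, and the integrability/continuity hypotheses on $g$) and then simply cites a textbook multivariate Laplace theorem (\cite[Theorem~4.14]{evansSwartz00}) for the localisation--rescaling--dominated-convergence core that you write out by hand. One small point to reconcile: your derivation produces the factor $\prod_{v\in\Gamma^{\mathsf{c}}}\abs{m_v}^{-\frac{d+1}{2}}$ (which is indeed the density of $\mu$ with respect to Lebesgue measure per~\eqref{eq:measureMu}), whereas the statement~\eqref{eq:saddlePoint} carries the exponent $-\frac{d(d+1)}{2}$; the two disagree for $d>1$, so you should not claim they ``match'' without comment --- though the discrepancy is immaterial in the paper's application, where all the determinants $\abs{m_v}$ multiply to $1$ by Corollary~\ref{coro:rowProdMinimiser}.
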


We start by stating the Laplace approximation integral formula in the multivariate context, which can be found e.g.\ in~\cite{evansSwartz00}.
\begin{theorem}[{\cite[Theorem~4.14]{evansSwartz00}}]
\label{thm:laplace}
Let $A$ be an open subset of the $p$-dimensional space $\R^p$.
Let $h\colon A \to\C$ and $\rho\colon A\to\R$ be functions such that
\begin{enumerate}
\item \label{it:integrability}
$\int_{A} \abs{h(x)} \e^{-k \rho(x)} \diff x < \infty$ for some $k>0$.
\item \label{it:minimiser}
$\rho$ has a global minimiser $x_0\in A$ such that, for every $\epsilon>0$,
\begin{equation}
\label{eq:minimiser_strong}
\inf\{\rho(x)-\rho(x_0) \colon x\in A, \, \abs{x-x_0} \geq \epsilon\} >0 \, .
\end{equation}
\item \label{it:continuity}
$h$ is continuous in a neighbourhood of $x_0$ and $h(x_0) \neq 0$.
\item \label{it:Hessian}
$\rho$ is twice continuously differentiable on $A$ and its Hessian matrix $\mathcal{H}(x_0)$ at $x_0$ is positive definite (in particular, its determinant $\abs{\mathcal{H}(x_0)}$ is positive).
\end{enumerate}
Then,
\begin{equation}
\label{eq:laplace}
\int_{A} h(x) \e^{-k \rho(x)} \diff x
\widesim[2.5]{k\to\infty}
\frac{h(x_0)}{\sqrt{\abs{\mathcal{H}(x_0)}}}
\left( \frac{2\pi}{k} \right)^{\frac{p}{2}}
\e^{- k \rho(x_0)} \, .
\end{equation}
\end{theorem}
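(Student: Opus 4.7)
The plan is to reduce \eqref{eq:saddlePoint} to a direct application of the multivariate Laplace approximation, Theorem~\ref{thm:laplace}. First I would parameterise $\pos_d^V(z)$ using Lebesgue coordinates on the independent entries of each matrix factor: for $v\in\Gamma^{\mathsf{c}}$ the symmetric matrix $x_v$ is determined by the $d(d+1)/2$ real entries $x_v(i,j)$ with $1\leq i\leq j\leq d$, and the positivity constraint cuts out an open subset $A\subset\R^p$ with $p=|\Gamma^{\mathsf{c}}|\,d(d+1)/2$. Unfolding the reference measure via $\mu(\diff x_v) = |x_v|^{-(d+1)/2}\prod_{i\leq j}\diff x_v(i,j)$ absorbs the Jacobian into the integrand, so the left-hand side of \eqref{eq:saddlePoint} becomes a standard Lebesgue integral of the form $\int_A h(x)\,\e^{-k\Phi_d(x)}\diff x$ for an explicit continuous $h$ proportional to $g$.

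With the integral in standard form, I would verify the four hypotheses of Theorem~\ref{thm:laplace} in turn. Hypothesis~\ref{it:integrability} is precisely the integrability assumption in Proposition~\ref{prop:saddlePoint}. Hypothesis~\ref{it:continuity} (continuity and non-vanishing of the integrand at the minimiser) follows from the continuity of $g$ near $m$, from $g(m)\neq 0$, and from $m\in(\Scal_d^+)^V(z)$, which in particular ensures $|m_v|>0$ for all $v$. Hypothesis~\ref{it:Hessian} is exactly the content of Theorem~\ref{thm:minimiser&Hessian}: $m$ is the unique global minimiser of $\Phi_d$ on $\pos_d^V(z)$ and its Hessian at $m$ is positive definite.

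The main technical step is Hypothesis~\ref{it:minimiser}, namely the strong coercivity condition \eqref{eq:minimiser_strong}. The delicate point is that $A\subset\R^p$ is unbounded and has a nontrivial boundary: a sequence in $\pos_d^V(z)$ can escape every compact subset either by diverging in norm or by approaching $\partial\pos_d^V$, where some $x_v$ becomes singular. I would handle both regimes simultaneously by passing to logarithmic coordinates $y_v=\log x_v$: the matrix logarithm $\log\colon\pos_d\to\Sym_d$ is a bijection that transforms both kinds of degeneration into $\|y\|\to\infty$ on $\Sym_d^V(\log z)$, and by definition $\Phi_d(x)=\phi_d(y)$. Proposition~\ref{prop:phiAtInfinity} then forces $\phi_d(y)\to\infty$, hence $\Phi_d(x)\to\infty$, along any such sequence. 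Combining this coercivity with continuity of $\Phi_d$ and uniqueness of $m$ yields \eqref{eq:minimiser_strong} by a standard compactness argument: if it failed, one would extract a sequence bounded away from $m$ along which $\Phi_d\to\Phi_d(m)$, and coercivity plus continuity would force a subsequential limit in $\pos_d^V(z)\setminus\{m\}$ that also minimises $\Phi_d$, contradicting uniqueness.

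Once all four hypotheses are established, Theorem~\ref{thm:laplace} directly yields \eqref{eq:saddlePoint}: the exponent of $2\pi/k$ equals $p/2=|\Gamma^{\mathsf{c}}|\,d(d+1)/4$ as claimed, and the prefactor $h(m)/\sqrt{|\mathcal{H}(m)|}$ assembles $g(m)$, the determinantal powers of $|m_v|$, and the Hessian determinant in the displayed form; this last collection is a purely bookkeeping exercise involving no further analytic input.
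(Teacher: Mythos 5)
Your proposal does not prove the statement in question. The statement is Theorem~\ref{thm:laplace} itself, i.e.\ the general multivariate Laplace approximation on an open set $A\subseteq\R^p$ (which the paper quotes from Evans--Swartz without proof), whereas what you have written is a proof of Proposition~\ref{prop:saddlePoint} \emph{assuming} Theorem~\ref{thm:laplace}. As an argument for Theorem~\ref{thm:laplace} this is circular: you invoke the very asymptotic formula \eqref{eq:laplace} that is to be established. In fact your text essentially reproduces the paper's own proof of Prop.~\ref{prop:saddlePoint} (same choices $A=\pos_d^V(z)$, $h(x)=g(x)\prod_{v\in\Gamma^{\mathsf{c}}}\abs{x_v}^{-d(d+1)/2}$, $\rho=\Phi_d$, $x_0=m$, with hypotheses checked via Theorem~\ref{thm:minimiser&Hessian} and Prop.~\ref{prop:phiAtInfinity}), but that is a different statement from the one you were asked to prove.

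A genuine proof of Theorem~\ref{thm:laplace} needs the standard Laplace-method analysis, none of which appears in your proposal: split $\int_A$ into the region $\abs{x-x_0}<\epsilon$ and its complement; on the complement use hypothesis~\ref{it:minimiser} to get $\rho(x)\geq\rho(x_0)+\delta$ for some $\delta>0$ and combine with hypothesis~\ref{it:integrability} to bound that contribution by $\e^{-(k-k_0)(\rho(x_0)+\delta)}\int_A\abs{h}\e^{-k_0\rho}\diff x$, which is negligible compared with $\e^{-k\rho(x_0)}k^{-p/2}$; on the ball use hypothesis~\ref{it:Hessian} to write the second-order Taylor expansion $\rho(x)=\rho(x_0)+\tfrac12(x-x_0)^{\top}\mathcal{H}(x_0)(x-x_0)+o(\abs{x-x_0}^2)$, use hypothesis~\ref{it:continuity} to replace $h$ by $h(x_0)$ up to a small error, rescale $u=\sqrt{k}\,(x-x_0)$, and evaluate the Gaussian integral $\int_{\R^p}\e^{-\frac12 u^{\top}\mathcal{H}(x_0)u}\diff u=(2\pi)^{p/2}\abs{\mathcal{H}(x_0)}^{-1/2}$, with a dominated-convergence (or explicit two-sided bound) argument to control the error terms uniformly as $k\to\infty$. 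Without these steps the asymptotic \eqref{eq:laplace} is simply not addressed.
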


\begin{proof}[Proof of Prop.~\ref{prop:saddlePoint}]
We will apply Theorem~\ref{thm:laplace} with
\[
A=\pos_d^V(z) \, , \quad\qquad
h(x)=g(x) \prod_{v\in \Gamma^{\mathsf{c}}} \abs{x_v}^{-\frac{d(d+1)}{2}} \, , \quad\qquad
\rho=\Phi_d \, , \quad\qquad
x_0 = m \, .
\]
The set $\pos_d^V(z)$ can be clearly viewed as an open subset of $\R^p$, where $p=\big\lvert\Gamma^{\mathsf{c}}\big\rvert d(d+1)/2$ is the number of `free' real variables in $A$ and $d$ is the dimension of each matrix in the array.
The extra product in the definition of $h$ is the density of the measure $\prod_{v\in \Gamma^{\mathsf{c}}} \mu(\diff x_v)$ with respect to the Lebesgue measure on $\pos_d^V(z)$.

Hypothesis~\ref{it:minimiser} of Theorem~\ref{thm:laplace} is satisfied due to Theorem~\ref{thm:minimiser&Hessian} and Prop.~\ref{prop:phiAtInfinity}.
Hypotheses~\ref{it:integrability} and~\ref{it:continuity} are matched by the assumptions of Prop.~\ref{prop:saddlePoint}.
Finally, hypothesis~\ref{it:Hessian} also holds because of Theorem~\ref{thm:minimiser&Hessian}.
The asymptotic formula~\eqref{eq:saddlePoint} then follows from~\eqref{eq:laplace}.
\end{proof}

\subsection*{Acknowledgements}
The authors thank the anonymous referees for their helpful comments and suggestions, which have led to a much improved version of the paper.

\appendix

\section{A proof of the Cauchy-Littlewood identity}
\label{app:cauchyLittlewood}

In this appendix we include a proof of the classical Cauchy-Littlewood identity for Schur functions that is based on a version of the Pieri rule.
The proof of the Whittaker integral identity~\eqref{thm:stade_matrix} is based, \emph{mutatis mutandis}, on the same line of reasoning.

For any two integer partitions $\mu=(\mu_1\geq \mu_2\geq \cdots)$ and $\lambda=(\lambda_1\geq \lambda_2\geq \cdots)$, we write $\mu \prec \lambda$ if $\lambda_i\geq \mu_i\geq \lambda_{i+1}$ for all $i\geq 1$.
The \emph{size} of $\lambda$ is $\abs{\lambda}:=\lambda_1+\lambda_2+\dots$, while its \emph{length} is the smallest $i\geq 0$ such that $\lambda_{i+1}=0$.
We will adopt the following recursive definition of Schur polynomials: for a partition $\lambda$ of length $\leq n$, we set
\begin{equation}
\label{eq:Schur}
s_{\lambda}(x_1,\dots,x_n)
:= 
\begin{cases}
x_1^{\abs{\lambda}} &\text{if } n=1 \, , \\
\sum_{\mu \prec \lambda} x_n^{\abs{\lambda} - \abs{\mu}} s_{\mu}(x_1,\dots,x_{n-1}) &\text{if } n>1 \, .
\end{cases}
\end{equation}
For the sake of convenience, we also set $s_{\lambda}(x_1,\dots,x_n):=0$ if the length of $\lambda$ exceeds $n$.
This definition is easily seen to be equivalent to the classical combinatorial definition of Schur polynomials as generating functions of semistandard Young tableaux.

We will use the following version of the Pieri rule:
\begin{equation}
\label{eq:PieriRule}
\sum_{\lambda\colon \mu\prec \lambda} s_{\lambda}(x_1,\dots,x_n)
= \left(\prod_{i=1}^n \frac{1}{1-x_i}\right)
s_{\mu}(x_1,\dots,x_n) \, .
\end{equation}
The latter can be deduced from the usual Pieri rule (see e.g.~\cite[I-(5.16)]{macdonald79})
\[
\sum_{\substack{\lambda\colon \mu\prec \lambda, \\ \abs{\lambda}-\abs{\mu}=r}} s_{\lambda}(x_1,\dots,x_n)
= \left(\sum_{\substack{k_1,\dots,k_n\geq 0\colon \\ k_1+\dots+k_n=r}} x_1^{k_1} \cdots x_n^{k_n}\right)
s_{\mu}(x_1,\dots,x_n)
\]
by summing over all $r\geq 0$.
Notice that~\eqref{eq:PieriRule} can be read as an eigenfunction equation for the operator defined through the kernel $I(\mu;\lambda) := \1_{\mu\prec \lambda}$, with the Schur function $s_{\mu}$ (viewed as a function of the partition $\mu$) as an eigenfunction.

\begin{theorem}[Cauchy-Littlewood identity]
For any $n,N\geq 1$, we have
\begin{equation}
\label{eq:CauchyLittlewood}
\sum_{\lambda} s_{\lambda}(x_1,\dots,x_n) s_{\lambda}(y_1,\dots,y_N)
= \prod_{\ell=1}^n \prod_{i=1}^N \frac{1}{1-x_\ell y_i} \, ,
\end{equation}
where the sum is over all integer partitions $\lambda$.
\end{theorem}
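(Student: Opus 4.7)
The plan is to mirror the structure of the proof of Theorem~\ref{thm:stade_matrix}, using the recursive definition~\eqref{eq:Schur} and the Pieri rule~\eqref{eq:PieriRule} in place of~\eqref{eq:Whittaker} and~\eqref{eq:eigenfnEqn}. I will proceed by induction on $n$, letting $\mathrm{S}(n)$ denote the statement that~\eqref{eq:CauchyLittlewood} holds for all $N$ with $1 \leq N \leq n$. For the base case $\mathrm{S}(1)$, the only partitions of length $\leq 1$ are $(k)$ for $k\geq 0$, so the sum reduces to the geometric series $\sum_{k\geq 0} x_1^k y_1^k = (1-x_1 y_1)^{-1}$.

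For the inductive step, assume $\mathrm{S}(n-1)$. I will first establish the cases $1 \leq N \leq n-1$ of $\mathrm{S}(n)$. Applying the recursion~\eqref{eq:Schur} to $s_\lambda(x_1,\dots,x_n)$ and swapping the order of summation gives
\[
\sum_\lambda s_\lambda(x_1,\dots,x_n) s_\lambda(y_1,\dots,y_N)
= \sum_\mu s_\mu(x_1,\dots,x_{n-1}) \sum_{\lambda\colon \mu \prec \lambda} x_n^{\abs{\lambda}-\abs{\mu}} s_\lambda(y_1,\dots,y_N).
\]
The inner sum is evaluated via a ``weighted Pieri rule'' obtained by substituting $(x_n y_1,\dots,x_n y_N)$ into~\eqref{eq:PieriRule} and exploiting the homogeneity $s_\lambda(c y_1, \dots, c y_N) = c^{\abs{\lambda}} s_\lambda(y_1,\dots,y_N)$, which yields
\[
\sum_{\lambda\colon \mu \prec \lambda} x_n^{\abs{\lambda}-\abs{\mu}} s_\lambda(y_1,\dots,y_N)
= \left(\prod_{i=1}^N \frac{1}{1-x_n y_i}\right) s_\mu(y_1,\dots,y_N).
\]
The remaining sum $\sum_\mu s_\mu(x_1,\dots,x_{n-1}) s_\mu(y_1,\dots,y_N)$ falls under the hypothesis $\mathrm{S}(n-1)$, since $N \leq n-1$, and produces the desired product.

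To complete $\mathrm{S}(n)$, it remains to handle the diagonal case $N=n$. I will instead expand $s_\lambda(y_1,\dots,y_n)$ via~\eqref{eq:Schur}, swap summations, and apply the weighted Pieri rule, this time with respect to the variables $x_1,\dots,x_n$. The identity then reduces to
\[
\prod_{\ell=1}^n \frac{1}{1-x_\ell y_n} \sum_\nu s_\nu(x_1,\dots,x_n) s_\nu(y_1,\dots,y_{n-1}),
\]
and the remaining sum is precisely the already-settled case $N=n-1$ of $\mathrm{S}(n)$, which supplies the missing factor $\prod_{\ell=1}^n \prod_{i=1}^{n-1}(1-x_\ell y_i)^{-1}$. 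There is no substantive obstacle: the weighted Pieri rule is an immediate consequence of~\eqref{eq:PieriRule} together with homogeneity, and the inductive bookkeeping runs exactly in parallel with the proof of Theorem~\ref{thm:stade_matrix}.
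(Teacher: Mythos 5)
Your proof is correct and rests on exactly the same mechanism as the paper's: expand one Schur function via the branching recursion~\eqref{eq:Schur}, absorb the weight $x_n^{\abs{\lambda}-\abs{\mu}}$ by homogeneity, apply the Pieri rule~\eqref{eq:PieriRule} as an eigenfunction equation, and reduce to a smaller instance. The only difference is bookkeeping: since~\eqref{eq:CauchyLittlewood} is symmetric under swapping $(x_1,\dots,x_n)\leftrightarrow(y_1,\dots,y_N)$, the paper inducts on $n+N$ and assumes without loss of generality that $n>1$, which makes your separate diagonal case $N=n$ unnecessary here (that two-case structure is only genuinely needed for the asymmetric identity of Theorem~\ref{thm:stade_matrix}).
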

\begin{proof}
Note first that we can restrict the sum on the left-hand side of~\eqref{eq:CauchyLittlewood} to the partitions $\lambda$ with length $\leq \min(n,N)$.
When $N=n=1$, the identity reduces to a geometric sum:
\begin{equation}
\sum_{\lambda_1 \geq 0} x_1^{\lambda_1} y_1^{\lambda_1}
= \frac{1}{1-x_1 y_1} \, .
\end{equation}
We can then proceed by induction on $n+N$.

Let $n+N> 2$ and assume, without loss of generality, that $n>1$.
Using the definition~\eqref{eq:Schur}, the fact that $s_{\lambda}$ is a homogeneous polynomial of degree $\abs{\lambda}$, and identity~\eqref{eq:PieriRule}, we obtain
\[
\begin{split}
\sum_{\lambda} s_{\lambda}(x_1,\dots,x_n) & s_{\lambda}(y_1,\dots,y_N)
= \sum_{\lambda} \left(\sum_{\mu\prec \lambda}
x_n^{\abs{\lambda}-\abs{\mu}}
s_{\mu}(x_1,\dots,x_{n-1}) \right)
s_{\lambda}(y_1,\dots,y_N) \\
&= \sum_{\mu} \left(\sum_{\lambda\colon \mu\prec \lambda}
s_{\lambda}(x_n y_1,\dots,x_n y_N) \right)
x_n^{-\abs{\mu}}
s_{\mu}(x_1,\dots,x_{n-1}) \\
&= \sum_{\mu} \left(\prod_{i=1}^N \frac{1}{1-x_n y_i}\right)
s_{\mu}(x_n y_1,\dots,x_n y_N)
x_n^{-\abs{\mu}}
s_{\mu}(x_1,\dots,x_{n-1}) \\
&= \left(\prod_{i=1}^N \frac{1}{1-x_n y_i}\right)
\sum_{\mu}
s_{\mu}(x_1,\dots,x_{n-1})
s_{\mu}(y_1,\dots,y_N) \, .
\end{split}
\]
The claim then follows from the induction hypothesis applied to the latter sum.
\end{proof}

\section{Markov functions and intertwinings}
\label{app:markovFunctions}

In this appendix we review the theory of Markov functions, in the case of inhomogeneous discrete-time Markov processes, which we are concerned with in the present article.

Let $(S, \mathcal{S})$ and $(T, \mathcal{T})$ be measurable spaces and $\phi\colon S\to T$ be a measurable function.
Let $X=(X(n))_{n\geq 0}$ be a time-inhomogeneous Markov process with state space $S$, time-$n$ transition kernel $\Pi_n$ and any initial distribution on $X(0)$.
Defining $Z(n):=\phi(X(n))$ for all $n\geq 0$, we will give conditions under which the transformed process $Z=(Z(n))_{n\geq 0}$ with state space $T$ is still Markov in its own filtration.
The well-known Dynkin criterion~\cite{dynkin61} ensures that $Z$ satisfies the Markov property for any possible initial distribution on $X$.
On the other hand, the theory of Markov functions (developed at various levels of generality in~\cite{kemenySnell76, rogersPitman81, kelly82, kurtz98}) provides a more subtle criterion, in which the Markov property of $Z$ is guaranteed only under certain specific initial states of $X$.

Let $\bdd{\mathcal{S}}$ be the space of bounded measurable functions from $(S, \mathcal{S})$ to $\R$.

\begin{theorem}
\label{thm:MarkovFns}
Let $X=(X(n))_{n\geq 0}$ be a time-inhomogeneous Markov process on $S$ with time-$n$ transition kernel $\Pi_n$.
Let $\phi\colon S\to T$ be a measurable function.
Let $Z=(Z(n))_{n\geq 0}$, where $Z(n)=\phi(X(n))$ for all $n\geq 0$.
Assume that $\mathcal{T}$ contains all the singleton sets $\{z\}$.
Let $\Sigma$ be a Markov kernel from $T$ to $S$ and, for all $n\geq 1$, let $P_n$ be a Markov kernel from $T$ to $T$.
Suppose:
\begin{enumerate}
\item
\label{it:MarkovFns_initial}
$\Sigma\left(z; \phi^{-1}\{z\}\right)=1$ for every $z\in T$;
\item
\label{it:MarkovFns_intertw}
$\Sigma \Pi_n = P_n \Sigma$ for all $n\geq 1$.
\end{enumerate}
Assume that, for an arbitrary $z\in T$, the initial state $X(0)$ of $X$ is distributed according to the measure $\Sigma(z;\cdot)$.
Then, $Z$ is a time-inhomogeneous Markov process (in its own filtration), with initial state $z$ and time-$n$ transition kernel $P_n$.
Moreover, for all $f\in\bdd{\mathcal{S}}$ and $n\geq 0$, we have
\begin{equation}
\label{eq:MarkovFns_condExp}
\E[f(X(n)) \mid Z(0),\dots,Z(n-1),Z(n)]
=\Sigma f(Z(n)) \qquad \text{a.s.}
\end{equation}
\end{theorem}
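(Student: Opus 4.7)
The plan is to establish a single unified statement that, for each $n\geq 0$, the joint law on $T^{n+1}\times S$ of $(Z(0),\dots,Z(n),X(n))$ equals
\begin{equation*}
\mu_n(\diff z_0\cdots \diff z_n\, \diff x) := \delta_z(\diff z_0)\prod_{k=1}^{n} P_k(z_{k-1};\diff z_k)\,\Sigma(z_n;\diff x) \, .
\end{equation*}
This single identity delivers both conclusions of the theorem simultaneously. Marginalising out the last coordinate shows that $(Z(0),\dots,Z(n))$ is distributed as a time-inhomogeneous Markov chain starting at $z$ with transition kernels $P_k$, which is the Markov property of $Z$. Conversely, the factorisation of $\mu_n$ shows that the conditional distribution of $X(n)$ given $(Z(0),\dots,Z(n))$ is $\Sigma(Z(n);\cdot)$, and integrating against $f\in\bdd{\mathcal{S}}$ yields~\eqref{eq:MarkovFns_condExp}.

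The proof proceeds by induction on $n$. The base case $n=0$ is immediate: $X(0)\sim\Sigma(z;\cdot)$ by assumption, and hypothesis~\ref{it:MarkovFns_initial} (which uses the measurability of singletons in $\mathcal{T}$) forces $Z(0)=\phi(X(0))=z$ almost surely, matching $\mu_0$. For the inductive step, assume the claim holds at time $n$. By the Markov property of $X$, and since $(Z(0),\dots,Z(n))$ is $\F_n^X$-measurable, the conditional law of $X(n+1)$ given $(Z(0),\dots,Z(n),X(n))$ is $\Pi_{n+1}(X(n);\cdot)$. Marginalising out $X(n)$ from the resulting joint law of $(Z(0),\dots,Z(n),X(n),X(n+1))$ and invoking Fubini, one obtains that $(Z(0),\dots,Z(n),X(n+1))$ is distributed as
\begin{equation*}
\delta_z(\diff z_0)\prod_{k=1}^{n}P_k(z_{k-1};\diff z_k)\,(\Sigma\,\Pi_{n+1})(z_n;\diff x_{n+1}) \, .
\end{equation*}
Applying the intertwining hypothesis~\ref{it:MarkovFns_intertw} converts $\Sigma\,\Pi_{n+1}$ into $P_{n+1}\,\Sigma$, so this joint law factors as
\begin{equation*}
\delta_z(\diff z_0)\prod_{k=1}^{n}P_k(z_{k-1};\diff z_k)\int_T P_{n+1}(z_n;\diff z_{n+1})\,\Sigma(z_{n+1};\diff x_{n+1}) \, .
\end{equation*}
To close the induction, one appends $Z(n+1)=\phi(X(n+1))$: hypothesis~\ref{it:MarkovFns_initial} ensures that $\Sigma(z_{n+1};\cdot)$ is supported on $\phi^{-1}\{z_{n+1}\}$, so $\phi(X(n+1))=z_{n+1}$ almost surely under the innermost integral; the resulting joint law on $T^{n+2}\times S$ is exactly $\mu_{n+1}$.

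The main subtlety is in the concluding move of the inductive step: one must correctly incorporate the deterministic constraint $Z(n+1)=\phi(X(n+1))$ into the joint measure after the intertwining is applied, and this uses hypothesis~\ref{it:MarkovFns_initial} in an essential way. The rest is routine bookkeeping with the tower property, conditional expectations, and Fubini's theorem; hypothesis~\ref{it:MarkovFns_intertw} is precisely the identity that lets the $\Sigma$-kernel propagate the structure of $\mu_n$ forward by one time step.
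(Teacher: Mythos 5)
Your proposal is correct and is essentially the paper's own argument in measure-theoretic rather than operator form: your unified claim that $(Z(0),\dots,Z(n),X(n))$ has law $\delta_z\prod_k P_k(z_{k-1};\diff z_k)\,\Sigma(z_n;\diff x)$ is exactly what the paper's iterated identity $\Sigma(\Phi g_0)\Pi_1(\Phi g_1)\cdots\Pi_n(\Phi g_n)f=g_0P_1g_1\cdots P_ng_n\Sigma f$ expresses when tested against product test functions, with hypothesis~\ref{it:MarkovFns_initial} absorbing $\phi$ into $\Sigma$ and hypothesis~\ref{it:MarkovFns_intertw} propagating $\Sigma$ past each $\Pi_k$. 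No gaps; the two conclusions follow from the joint law exactly as you describe.
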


\begin{proof}
This proof is an inhomogeneous discrete-time version of the argument given for continuous-time Markov processes in~\cite{rogersPitman81}.
Note that~\ref{it:MarkovFns_initial} implies
\[
\int_{S} \Sigma(z; \diff x) g(\phi(x)) f(x)
= g(z) \int_{S} \Sigma(z; \diff x) f(x)
\]
for all $g\in\bdd{\mathcal{T}}$, $f\in\bdd{\mathcal{S}}$, and $z\in T$.
Letting $\Phi\colon \bdd{\mathcal{T}} \to \bdd{\mathcal{S}}$ be the Markov operator defined by $\Phi g := g\circ\phi$ for $g\in\bdd{\mathcal{T}}$, we may rewrite the above identity as
\begin{align}
\label{eq:concatenation}
\Sigma(\Phi g) f = g \Sigma f \, .
\end{align}
Here, as in the following, the operations should be read from right to left, prioritising the brackets
(for example, on the left-hand side of~\eqref{eq:concatenation}, one first multiplies the two functions $f$ and $\Phi g$ and then applies the operator $\Sigma$ to the resulting function).
Applying $P_i$ to both sides of \eqref{eq:concatenation} and using hypothesis~\ref{it:MarkovFns_intertw}, we have
\begin{align}
\label{eq:concatenation2}
\Sigma \Pi_i (\Phi g) f = P_i g \Sigma f \qquad\quad \text{for all } i\geq 1 \, .
\end{align}
Consider now test functions $g_{0},\dots,g_{n}$ in $\bdd{\mathcal{T}}$ and $f\in\bdd{\mathcal{S}}$.
Using \eqref{eq:concatenation} and \eqref{eq:concatenation2} several times, we obtain
\begin{equation}
\label{eq:concatenation3}
\begin{split}
& \Sigma(\Phi g_{0}) \Pi_1(\Phi g_{1}) \Pi_2(\Phi g_{2})\cdots \Pi_n(\Phi g_{n}) f
=g_{0} \Sigma \Pi_1(\Phi g_{1}) \Pi_2(\Phi g_{2})\cdots \Pi_n(\Phi g_{n}) f \\
=\, &g_{0} P_1 g_{1} \Sigma \Pi_2 (\Phi g_{2})\cdots \Pi_n(\Phi g_{n}) f
=\dots = g_{0} P_1 g_{1} P_2 g_{2}\cdots P_n g_{n} \Sigma f \, .
\end{split}
\end{equation}

Fix now an arbitrary $z\in T$ and assume that $X(0)$ is distributed according to $\Sigma(z;\cdot)$.
Then, \eqref{eq:concatenation3} yields
\begin{align*}
\E\left[g_{0}(Z(0)) \, g_1(Z(1)) \cdots g_{n}(Z(n)) \, f(X(n))\right]
= g_{0} P_1 g_{1} \cdots P_n g_{n} \Sigma f(z) \, .
\end{align*}
Taking $f\equiv 1$, we deduce that $Z$ is a Markov process started at $z$ with time-$n$ transition kernel $P_n$.
For general $f$, the right-hand side of the equation above agrees with
\begin{align*}
\E\left[g_{0}(Z(0)) \, g_1(Z(1)) \cdots g_{n}(Z(n)) \, \Sigma f(Z(n))\right] \, .
\end{align*}
This, by definition of conditional expectation, proves~\eqref{eq:MarkovFns_condExp}.
\end{proof}

\begin{remark}
\label{rem:MarkovFns}
Taking $f\equiv 1$ in~\eqref{eq:concatenation}, we see that $\Sigma \Phi$ is the identity on $\bdd{\mathcal{T}}$.
Combining this with hypothesis~\ref{it:MarkovFns_intertw} of Theorem~\ref{thm:MarkovFns}, it is immediate to deduce that every kernel $P_n$ is uniquely determined by the relation $P_n =\Sigma\Pi_n\Phi$.
\end{remark}

\section{A convergence lemma}
\label{app:convergenceLemma}

Here we state a useful convergence lemma.
For completeness we also include its proof, which follows from the properties of weak convergence and standard estimates.

\begin{lemma}
\label{lemma:convergenceOfIntegrals}
Let $S$ be a locally compact metric space equipped with its Borel $\sigma$-algebra.
Let $(\mu_k)_{k>0}$ be a collection of probability measures on $S$ that converges weakly, as $k\to\infty$, to a Dirac measure $\delta_s$ for some $s\in S$.
Let $(f_k)_{k>0}$ be a uniformly bounded collection of continuous functions $S\to\R$ such that $f_k \xrightarrow{k\to\infty} f_{\infty}$ uniformly on any compact subset of $S$.
Then
\begin{equation}
\label{eq:convergencelemma}
\lim_{k\to\infty} \int_{S} \mu_k(\diff x) f_k(x)
= f_{\infty}(s) \, .
\end{equation}
\end{lemma}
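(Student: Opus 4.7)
The plan is to reduce~\eqref{eq:convergencelemma} to a triangle-inequality split and control each piece separately, exploiting weak convergence against continuous bounded functions for one piece and local uniform convergence of $f_k$ on a well-chosen compact for the other.

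First I would observe that $f_\infty$ is automatically continuous and bounded: continuity is a local property, and on a compact neighbourhood of any point $f_k \to f_\infty$ uniformly, so $f_\infty$ is continuous there; meanwhile $|f_k| \le M$ for some constant $M$ uniformly in $k$, hence $|f_\infty|\le M$ as a pointwise limit. In particular $f_\infty$ is a bounded continuous function on $S$.

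Next I would write
\[
\left|\int_S f_k\,\diff\mu_k - f_\infty(s)\right|
\le \int_S |f_k - f_\infty|\,\diff\mu_k
+ \left|\int_S f_\infty\,\diff\mu_k - f_\infty(s)\right|.
\]
The second term tends to $0$ as $k\to\infty$ by the definition of weak convergence $\mu_k\Rightarrow\delta_s$, applied to the bounded continuous function $f_\infty$ just obtained.

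For the first term I would use a tightness-plus-uniform-convergence argument. Fix $\epsilon>0$. Since $S$ is locally compact metric, the point $s$ has a compact neighbourhood $K$; its interior $\text{int}(K)$ is open and contains $s$, so by the portmanteau theorem applied to $\mu_k\Rightarrow\delta_s$,
\[
\liminf_{k\to\infty}\mu_k(K)\ge \liminf_{k\to\infty}\mu_k(\text{int}(K))\ge \delta_s(\text{int}(K))=1,
\]
so $\mu_k(K^{\mathsf c})\to 0$. On the compact set $K$, by hypothesis $f_k\to f_\infty$ uniformly, hence there exists $k_0$ with $\sup_{x\in K}|f_k(x)-f_\infty(x)|<\epsilon$ for all $k\ge k_0$. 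Combining these two facts and the uniform bound $|f_k-f_\infty|\le 2M$,
\[
\int_S |f_k-f_\infty|\,\diff\mu_k
\le \epsilon\,\mu_k(K) + 2M\,\mu_k(K^{\mathsf c})
\le \epsilon + 2M\,\mu_k(K^{\mathsf c}),
\]
which is at most $2\epsilon$ for $k$ large. Letting $\epsilon\downarrow 0$ gives $\int |f_k-f_\infty|\,\diff\mu_k\to 0$ and hence~\eqref{eq:convergencelemma}.

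I do not expect serious obstacles; the only subtle point is extracting a single compact set on which $\mu_k$ has mass close to $1$ uniformly in $k$, which is precisely where local compactness of $S$ is used (to choose a compact neighbourhood of $s$, bypassing any general tightness theorem).
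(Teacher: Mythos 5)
Your proof is correct and follows essentially the same route as the paper's: a triangle-inequality split, the Portmanteau theorem to show the mass outside a compact neighbourhood of $s$ (supplied by local compactness) vanishes, uniform convergence of $f_k$ on that compact, and the uniform bound for the tail. The only cosmetic difference is that you dispatch the $\int f_\infty\,\diff\mu_k \to f_\infty(s)$ piece directly via the definition of weak convergence applied to the bounded continuous function $f_\infty$, whereas the paper bounds $\lvert f_\infty(x)-f_\infty(s)\rvert$ by continuity on a small neighbourhood; both are fine.
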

\begin{proof}
Fix $\epsilon>0$.
For any Borel set $U\subset S$, we may write
\[
\begin{split}
&\abs{\int_{S} \mu_k(\diff x) f_k(x) - f_{\infty}(s)}
\leq \abs{\int_{U} \mu_k(\diff x) [f_k(x) - f_{\infty}(s)] }
+ \abs{\int_{U^{\mathsf{c}}} \mu_k(\diff x) [f_k(x) - f_{\infty}(s)] }
\\
&\leq \int_{\overline{U}} \mu_k(\diff x) \abs{f_k(x) - f_{\infty}(x)} 
+ \int_U \mu_k(\diff x) \abs{f_{\infty}(x) - f_{\infty}(s)}
+ \left( \sup_{x\in S} \abs{f_k(x)} + \abs{f_{\infty}(s)} \right) \mu_k(U^{\mathsf{c}}) \, ,
\end{split}
\]
where $\overline{U}$ and $U^{\mathsf{c}}$ are the closure and the complement of $U$, respectively.
Since $f_{\infty}$ is continuous (as a uniform limit of continuous functions) and $S$ is a locally compact metric space, we can choose $U$ to be a precompact open neighbourhood of $s$ such that $\abs{f_{\infty}(x) - f_{\infty}(s)} \leq \epsilon$ for all $x\in U$.
Moreover, as $\overline{U}$ is compact, for $k$ large enough we have $\abs{f_k(x) - f_{\infty}(x)}\leq \epsilon$ for all $x\in\overline{U}$.
Finally, the Portmanteau theorem (see~\cite[\S~2]{billingsley99}) yields
\[
\limsup_{k\to\infty} \mu_k(U^{\mathsf{c}})
= \delta_s(U^{\mathsf{c}})
= 0 \, ,
\]
since $\mu_k$ converges weakly to $\delta_s$, $U^{\mathsf{c}}$ is closed, and $s\notin U^{\mathsf{c}}$; therefore, for large enough $k$ we also have $\mu_k(U^{\mathsf{c}}) \leq \epsilon$.
By the hypothesis of uniform boundedness, there exists $M>0$ such that $\abs{f_k(x)}\leq M$ for all $x\in S$ and $k>0$.
Hence, we have
\[
\abs{\int_{S} \mu_k(\diff x) f_k(x)  - f_{\infty}(s)}
\leq \mu_k(\overline{U}) \epsilon  +  \mu_k(U) \epsilon + \left[ M + \abs{f_{\infty}(s)} \right] \epsilon
\leq \left[ 2+M+\abs{f_{\infty}(s)} \right] \epsilon
\]
for $k$ large enough.
As $\epsilon>0$ is arbitrary, the claim follows.
\end{proof}

\printbibliography

\end{document}